\newtheorem{thm}{Theorem}
\newtheorem{prop}[thm]{Proposition}
\newtheorem{lemma}[thm]{Lemma}
\newtheorem{cor}[thm]{Corollary}
\theoremstyle{definition}
\newtheorem*{defn}{Definition}
\newtheorem*{rmk}{Remark}
\begin{document}

\begin{center}
{\Large \bf On varieties of commuting nilpotent matrices}
\end{center}
\begin{center}
{\large Nham V. Ngo}\\
Department of Mathematics and Statistics, Lancaster University, Lancaster, LA1 4YW, United Kingdom\\e-mail address: n.ngo@lancaster.ac.uk
\end{center}
\begin{center}and\end{center}
\begin{center}
{\large Klemen \v{S}ivic}\\
Department of Mathematics, ETH Z\"urich, R\"amistrasse 101, Z\"urich, Switzerland \\ e-mail address: klemen.sivic@math.ethz.ch
\end{center}

\begin{abstract}
Let $N(d,n)$ be the variety of all $d$-tuples of commuting nilpotent $n\times n$ matrices. It is well-known that $N(d,n)$ is irreducible if $d=2$, if $n\le 3$ or if $d=3$ and $n=4$. On the other hand $N(3,n)$ is known to be reducible for $n\ge 13$. We study in this paper the reducibility of $N(d,n)$ for various values of $d$ and $n$. In particular, we prove that $N(d,n)$ is reducible for all $d,n\ge 4$. In the case $d=3$, we show that it is irreducible for $n\le 6$. \\
{\bf Keywords:} irreducibility of varieties of commuting nilpotent matrices, approximation by 1-regular nilpotent matrices\\
{\bf Math. subj. class.:} 15A27
\end{abstract}

\section{Introduction}
Let $C(d,n)$ denote the set of all $d$-tuples of commuting $n\times n$ matrices over an algebraically closed field $\mathbb{F}$ and let $N(d,n)$ be its subset consisting of all $d$-tuples of {\em nilpotent} commuting matrices. Both sets are defined by polynomial equations in the entries of matrices, therefore they can be viewed as affine varieties in $\mathbb{F}^{dn^2}$. Irreducibility of $C(d,n)$ has been studied for a long time. Motzkin and Taussky \cite{MT} proved that the variety $C(2,n)$ is irreducible for each positive integer $n$. On the other hand, $C(d,n)$ is known to be reducible if $d$ and $n$ are both at least 4. This result is usually referred to Gerstenhaber \cite{Ge}, but he proved reducibility of $C(d,n)$ only if $n\ge 4$ and $d\ge n+1$ (and his proof can be applied also to $d=n\ge 4$), while reducibility of $C(d,n)$ for $d\ge 4$ and $n\ge 4$ was proved by Guralnick \cite{Gu}, using Gerstenhaber's idea. It is also known that $C(d,n)$ is irreducible for each $d$ if $n\le 3$ (see \cite{KN} and \cite{Gu}). The situation in the case of triples is much more complicated, and the problem of irreducibility of $C(3,n)$ is not solved completely. With a dimension argument Guralnick \cite{Gu} proved that $C(3,n)$ is reducible for $n\ge 32$, and using his idea Holbrook and Omladi\v{c} showed reducibility of $C(3,n)$ for $n\ge 30$. In fact, their proof in \cite{HO} shows also reducibility of $C(3,29)$, since in the case of irreducibility of $C(3,n)$ all its subvarieties must have strictly smaller dimension than $n^2+2n$, and the inequality at the beginning of page 136 of \cite{HO} does not need to be strict. On the other hand, $C(3,n)$ is known to be irreducible for $n\le 10$ (see \cite{GS, HO, O, Han, S1, S3}). The last results were stated if $\mathrm{char}\, \mathbb{F}=0$, but in many proofs this assumption can be omitted, and probably slight modifications of the other proofs would also give the same results in any characteristic.

In this paper we study reducibility of $N(d,n)$ and the characteristic of $\mathbb{F}$ will be arbitrary. The study of $N(d,n)$ started much later than that of $C(d,n)$. Baranovsky \cite{Bar} and Basili \cite{Bas} independently showed that $N(2,n)$ is irreducible for each $n$, and this result was significantly generalized by Premet \cite{P} to varieties of pairs of commuting nilpotent elements of arbitrary reductive Lie algebras. Explicitly, he proved that these varieties are equidimensional and their components are parametrized by distinguished nilpotent conjugacy classes in the Lie algebra. Moreover, his proof for irreducibility of $N(2,n)$ does not depend on the characteristic of $\mathbb{F}$, while in \cite{Bar} and \cite{Bas} there are some restrictions on $\mathrm{char}\, \mathbb{F}$. On the other hand, for $d\ge 3$ much less is known. If $n\le 3$, then $N(d,n)$ is irreducible for each $d$ by Proposition 5.1.1 and Theorem 7.1.2 of \cite{N}. In the introduction of Chapter II of Young's thesis \cite{Y} it was claimed that $N(d,n)$ is reducible for $d\ge 4$ and $n\ge 4$. However, this result was not proved and it is also not clear how to obtain it. To show reducibility of $C(d,n)$ for $n\ge 4$ and $d\ge n$ Gerstehnaber \cite{Ge} constructed a unital algebra (i.e. an algebra with an identity) of dimension more than $n$ which is generated by $d$ commuting (square zero) $n\times n$ matrices, and his proof can clearly be applied to the nilpotent case to show reducibility of $N(d,n)$ for $d\ge n\ge 4$. However, Guralnick's \cite{Gu} proof of reducibility of $C(d,n)$ for $n\ge 4$ and $d\ge 4$, using block diagonal matrices, cannot be applied to the nilpotent case, since all blocks have to be nilpotent. Using Gerstenhaber's idea we will prove that $N(d,n)$ is indeed reducible for all $d\ge 4$ and $n\ge 4$. In particular, for each $n\ge 4$ we will find a unital $(n+1)$-dimensional commutative subalgebra of $M_n(\mathbb{F})$ generated by 4 nilpotent matrices. The question of irreducibility of $N(d,n)$ then remains open only in the case of triples, i.e. for $d=3$. There are two known results about irreducibility of $N(3,n)$. In \cite{Y}, Theorem 2.2.1 it was proved  that $N(3,4)$ is irreducible. On the other hand, Clark, O'Meara and Vinsonhaler proved in their book on Weyr form \cite{CO'MV} that the variety $N(3,n)$ is reducible for $n\ge 13$ (Theorem 7.10.5), using methods from \cite{Gu} and \cite{HO}. Since this result was proved for much smaller dimension than analogous result for $C(3,n)$, it seems to be easier to determine when $N(3,n)$ is irreducible than when $C(3,n)$ is. In this paper we partially answer this question. We prove irreducibility of $N(3,n)$ for $n\le 6$. Our result provides a rigorous argument for irreducibility of $N(3,4)$ stated in Young's thesis \cite{Y}. We also prove that the variety of pairs of commuting nilpotent matrices in the centralizer of a 2-regular nilpotent matrix (i.e. matrix whose Jordan canonical form has at most 2 Jordan blocks) is irreducible, which gives an analog to Corollary 10 of \cite{NS}. However, we will show that the same result does not hold for 3-regular case, i.e. a result analogous to Theorem 12 of \cite{S2} does not hold in the nilpotent case.

The paper is organized as follows. In Section 2 we recall some preliminary results on nilpotent commuting matrices from the literature. We show that the variety $N(d,n)$ is irreducible if and only if it is equal to the Zariski closure of the set of all $d$-tuples of commuting nilpotent 1-regular $n\times n$ matrices. As a corollary we show two interesting relations between (ir)reducibility of varieties $N(d,n)$ and $C(d,n)$. In Section 3 we show that for each $n\ge 4$ there exists a unital $(n+1)$-dimensional commutative subalgebra of $M_n(\mathbb{F})$ which is generated by 4 nilpotent matrices. This will imply reducibility of varieties $N(d,n)$ for all $d\ge 4$ and $n\ge 4$. In Section 4 we consider varieties of pairs of commuting nilpotent matrices in the centralizers of given nilpotent matrices. We show that such variety is irreducible if the given matrix is 2-regular, but it can be reducible if the given matrix is 3-regular. In Section 5 we prove irreducibility of varieties $N(3,5)$ and $N(3,6)$, using simultaneous commutative perturbations of triples of commuting nilpotent matrices by triples of commuting nilpotent 1-regular matrices. We also show that in any dimension a triple of nilpotent commuting matrices can be perturbed by triples of 1-regular nilpotent commuting matrices if Jordan canonical forms of the matrices in the triple have only one nonzero Jordan block or all Jordan blocks of order at most 2. Note that analogous results in the case of $C(3,n)$ were proved in \cite{HO}.

\section{Preliminaries}

In the proofs of (ir)reducibility of $C(d,n)$ generic matrices play an important role. Recall that an $n\times n$ matrix is called {\em generic} if it has $n$ distinct eigenvalues. Generic matrices form an open subset of $M_n(\mathbb{F})$ defined by the inequality $\det p_X'(X)\ne 0$ where $p'_X$ is the derivative of the characteristic polynomial of the matrix $X$. In particular, each irreducible component of the subvariety of $M_n(\mathbb{F})$ consisting of all matrices that are not generic is $(n^2-1)$-dimensional. Moreover, the variety $C(d,n)$ is irreducible if and only if it is equal to the Zariski closure of the set of all $d$-tuples of generic commuting $n\times n$ matrices (see \cite{S3}, and note that these results do not depend on the characteristic of $\mathbb{F}$). To adapt this strategy for proving irreducibility of $N(d,n)$, we use the concept of 1-regular matrices instead. In this section we recall some results from the literature, especially from \cite{CO'MV}, on commuting nilpotent matrices, that will be needed in the sequel. We start with the following definition from \cite{NS}.

\begin{defn}
A matrix $A\in M_n(\mathbb{F})$ is called {\em $r$-regular} if each its eigenspace is at most $r$-dimensional.
\end{defn}

\begin{rmk}
Note that the eigenspace of a nilpotent matrix is its kernel and that a nilpotent matrix is $r$-regular if and only if its Jordan canonical form has at most $r$ Jordan blocks.
\end{rmk}

For the study of commutativity 1-regular matrices (which are also called {\em regular} or {\em nonderogatory}) are especially important, since each matrix that commutes with 1-regular matrix is a polynomial in that matrix. Moreover, each (nilpotent) $n\times n$ matrix can be perturbed by (nilpotent) 1-regular matrices, and therefore it belongs to the Zariski closure of the set of (nilpotent) 1-regular matrices. We denote by $N_n$ the variety of all nilpotent $n\times n$ matrices, by $R_n$ the set of all 1-regular nilpotent $n\times n$ matrices, by $R(d,n)$ the set of all $d$-tuples $(A_1,A_2,\ldots ,A_d)\in N(d,n)$ such that the linear span of $A_1,\ldots ,A_d$ contains a 1-regular matrix, and by $R_1(d,n)$ the set of all $d$-tuples $(A_1,\ldots ,A_d)\in N(d,n)$ with $A_1$ 1-regular. By Proposition 1 of \cite{NS} the set of all 1-regular $n\times n$ matrices is open in $M_n(\mathbb{F})$ in the Zariski topology, therefore $R_n$ is open in $N_n$ and $R_1(d,n)$ is open in $N(d,n)$. On the other hand, the set $R(d,n)$ is obtained by the action of the general linear group $GL_d(\mathbb{F})$ on $R_1(d,n)$, hence it can be written as a union of the sets isomorphic to $R_1(d,n)$. Therefore $R(d,n)$ is also open in $N(d,n)$. In particular, if $N(d,n)$ is irreducible, then $\overline{R(d,n)}=\overline{R_1(d,n)}=N(d,n)$, where $\overline{S}$ denotes the closure of the set $S$ in the Zariski topology. We first show that the converse of this implication also holds. If follows from the following proposition (which is essentially Lemma 7.10.3 of \cite{CO'MV}) that shows the irreducibility and the dimension of $\overline{R_1(d,n)}$.

\begin{prop}\label{1-regular}
The variety $\overline{R_1(d,n)}$ is irreducible and of dimension $(n+d-1)(n-1)$.
\end{prop}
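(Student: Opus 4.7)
The plan is to realize $R_1(d,n)$ as (essentially) a product of the variety $R_n$ of $1$-regular nilpotent $n\times n$ matrices with an affine space, thereby reading off both irreducibility and dimension.

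First I would use the basic fact (recalled just before the proposition) that any matrix commuting with a $1$-regular matrix $A_1$ is a polynomial in $A_1$. Since $A_1\in R_n$ is nilpotent, $\mathbb{F}[A_1]=\mathrm{span}(I,A_1,\ldots,A_1^{n-1})$ is $n$-dimensional, and an element $\sum_{j=0}^{n-1}c_jA_1^j$ is nilpotent if and only if $c_0=0$. Consequently, for $(A_1,\ldots,A_d)\in R_1(d,n)$ each $A_i$ with $i\ge 2$ is uniquely of the form $\sum_{j=1}^{n-1}c_{ij}A_1^j$ for some $c_{ij}\in\mathbb{F}$.

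Next I would define the morphism
\[
\varphi: R_n\times\mathbb{F}^{(d-1)(n-1)}\longrightarrow R_1(d,n),\qquad
\bigl(A_1,(c_{ij})\bigr)\longmapsto\Bigl(A_1,\textstyle\sum_{j}c_{2j}A_1^j,\ldots,\sum_{j}c_{dj}A_1^j\Bigr).
\]
By the previous paragraph $\varphi$ is a bijection, since the powers $A_1,\ldots,A_1^{n-1}$ are linearly independent so the coefficients $c_{ij}$ are uniquely determined by the tuple. Now $R_n$ is an open subset of the (irreducible) nilpotent cone $N_n\subset M_n(\mathbb{F})$, which has dimension $n^2-n=n(n-1)$; since $R_n$ is nonempty (it contains the standard Jordan block) it is dense in $N_n$ and in particular irreducible of the same dimension $n(n-1)$. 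Hence the source of $\varphi$ is an irreducible variety of dimension $n(n-1)+(d-1)(n-1)=(n+d-1)(n-1)$.

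Irreducibility of $\overline{R_1(d,n)}$ follows immediately: the image of an irreducible variety under a morphism is irreducible, and taking the Zariski closure preserves irreducibility. For the dimension, $\varphi$ is a bijective morphism, so its fibers are all singletons and hence $\dim R_1(d,n)=\dim\bigl(R_n\times\mathbb{F}^{(d-1)(n-1)}\bigr)=(n+d-1)(n-1)$, and the closure has the same dimension. The only mildly delicate point is that in positive characteristic a bijective morphism need not be an isomorphism of varieties, but this does not affect irreducibility or the equality of dimensions, which is all that is required; no further obstacle is expected.
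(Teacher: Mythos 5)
Your proof is correct and follows essentially the same route as the paper: you parametrize $R_1(d,n)$ by a $1$-regular nilpotent $A_1$ together with the coefficient vectors of the polynomials expressing $A_2,\ldots,A_d$ (the paper writes these polynomials as $A_1p_i(A_1)$ with $p_i\in\mathbb{F}_{n-2}[t]$, but this is the same parametrization), and then you deduce irreducibility and dimension from the irreducibility and dimension $n(n-1)$ of $N_n$ (equivalently of its dense open subset $R_n$) via the theorem on fibers. Your closing caveat about bijective morphisms in positive characteristic is well placed and correctly dismissed.
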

\begin{proof}
For any nonnegative integer $k$ we denote by $\mathbb{F}_k[t]$ the space of all polynomials in $t$ over $\mathbb{F}$ of degree at most $k$. The polynomial map $\varphi \colon R_n\times (\mathbb{F}_{n-2}[t])^{d-1}\to R_1(d,n)$ defined by
$$\varphi (A,p_1,p_2,\ldots ,p_{d-1})=(A,Ap_1(A),Ap_2(A),\ldots ,Ap_{d-1}(A))$$
is bijective, since all matrices that commute with 1-regular nilpotent matrix $A$ are polynomials in $A$, and the constant terms of these polynomials have to be zero because of the nilpotency of the matrices in a $d$-tuple from $N(d,n)$. The variety $N_n=\overline{R_n}$ is irreducible and of dimension $n^2-n$ by Proposition 2.1 of \cite{Bas}, therefore $\overline{R_1(d,n)}$ is the closure of a polynomial image of the irreducible variety $N_n\times (\mathbb{F}_{n-2}[t])^{d-1}$, hence it is irreducible. Moreover, the theorem on fibres (Theorem 11.12 of \cite{Har}) implies that the varieties $N_n\times (\mathbb{F}_{n-2}[t])^{d-1}$ and $\overline{R_1(d,n)}$ have the same dimensions, i.e.
$$\dim \overline{R_1(d,n)}=\dim N_n+\dim ((\mathbb{F}_{n-2}[t])^{d-1})=(n+d-1)(n-1).$$
\end{proof}

\begin{rmk}
Based on this result and the arguments from \cite{Gu} and \cite{HO} the reducibility of $N(3,n)$ for $n\ge 13$ was proved in Theorem 7.10.5 of \cite{CO'MV}.
\end{rmk}

\begin{cor}\label{dense}
The variety $N(d,n)$ is irreducible if and only if $\overline{R(d,n)}=N(d,n)$.
\end{cor}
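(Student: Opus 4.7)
The forward direction is already noted in the paragraph immediately preceding the corollary: $R(d,n)$ is a nonempty open subset of $N(d,n)$ (nonempty because any $d$-tuple $(J,0,\ldots,0)$ with $J$ a single nilpotent Jordan block lies in $R_1(d,n)\subseteq R(d,n)$), and in an irreducible variety every nonempty open subset is dense. So only the converse requires a proof.

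For the converse, the plan is to show that $\overline{R(d,n)}$ is itself irreducible regardless of any hypothesis on $N(d,n)$; once this is in hand, the assumption $\overline{R(d,n)}=N(d,n)$ immediately yields irreducibility of $N(d,n)$. The first step is to recall from Proposition \ref{1-regular} that $\overline{R_1(d,n)}$ is irreducible, so in particular the constructible set $R_1(d,n)$ is irreducible. The second step is to use the observation, made in the paragraph before the corollary, that $R(d,n)$ is precisely the orbit $GL_d(\mathbb{F})\cdot R_1(d,n)$ under the natural linear action of $GL_d(\mathbb{F})$ on $d$-tuples. The third step is to present $R(d,n)$ as the image of the morphism
$$\mu\colon GL_d(\mathbb{F})\times R_1(d,n)\longrightarrow N(d,n),\qquad (g,(A_1,\ldots,A_d))\longmapsto g\cdot(A_1,\ldots,A_d),$$
where $GL_d(\mathbb{F})$ is an irreducible variety (a principal open subset of $\mathbb{F}^{d^2}$). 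Since the product of irreducible varieties is irreducible and the continuous (in fact morphic) image of an irreducible set is irreducible, $R(d,n)$ is irreducible, and therefore so is its Zariski closure $\overline{R(d,n)}$.

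Combining these steps: assuming $\overline{R(d,n)}=N(d,n)$, we conclude that $N(d,n)$ is irreducible, which finishes the converse and hence the corollary.

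I do not anticipate a genuine obstacle here; the argument is essentially a bookkeeping assembly of Proposition \ref{1-regular}, the openness/nonemptiness of $R(d,n)$ in $N(d,n)$, and the elementary fact that morphic images of irreducible varieties are irreducible. The only point meriting a sentence of care is the identification $R(d,n)=GL_d(\mathbb{F})\cdot R_1(d,n)$, which has already been recorded in the text just before the statement and so can simply be invoked.
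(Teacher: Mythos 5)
Your proof is correct and follows essentially the same route as the paper: the forward direction is the openness/nonemptiness observation already recorded before the corollary, and the converse rests on the irreducibility of $\overline{R(d,n)}$, deduced (as the paper also does, if less explicitly) from $R(d,n)=GL_d(\mathbb{F})\cdot R_1(d,n)$ together with the irreducibility of $\overline{R_1(d,n)}$ from Proposition~\ref{1-regular}. Spelling out the morphism $\mu\colon GL_d(\mathbb{F})\times R_1(d,n)\to N(d,n)$ is a clean way to make precise the paper's appeal to the $GL_d$-action; the paper additionally records the equality $\overline{R(d,n)}=\overline{R_1(d,n)}$, which you skip but which is not needed for the corollary itself.
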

\begin{proof}
Since the variety $\overline{R_1(d,n)}$ is irreducible and the set $R(d,n)$ is obtained from $R_1(d,n)$ by the action of the group $GL_d(\mathbb{F})$, the variety $\overline{R(d,n)}$ is also irreducible. Moreover, since the set $R_1(d,n)$ is its open subset, the varieties $\overline{R(d,n)}$ and $\overline{R_1(d,n)}$ are equal. The corollary now follows immediately from the previous proposition.
\end{proof}

The following two corollaries show the relations between (ir)reducibility of $C(d,n)$ and of $N(d,n)$.

\begin{cor}\footnote{This is originally Theorem 3.2.2 in the unpublished paper \cite{GN}. The first author thanks Robert Guralnick for sharing the idea with him.}
Suppose that the variety $C(d,m)$ is irreducible for each $m<n$ and that $N(d,n)$ is irreducible. Then $C(d,n)$ is also irreducible.
\end{cor}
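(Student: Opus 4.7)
The plan is to use the criterion (noted in the preliminaries as a result of \cite{S3}, the non-nilpotent analog of Corollary~\ref{dense}) that $C(d,n)$ is irreducible if and only if it equals the Zariski closure of the set of $d$-tuples of commuting matrices whose first entry is generic, equivalently whose linear span contains a generic matrix. Thus I fix an arbitrary $(A_1,\ldots ,A_d)\in C(d,n)$ and aim to exhibit it as a limit of commuting tuples with first entry generic. The argument splits into two cases according to whether some linear combination of the $A_i$ has two or more distinct eigenvalues.

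Suppose first that some $A_i$, and hence after a $GL_d$-change of variable on the tuple WLOG $A_1$, has at least two distinct eigenvalues. I decompose $\mathbb{F}^n=V_1\oplus\cdots\oplus V_r$ into the generalized $A_1$-eigenspaces; by commutativity each $V_j$ is invariant under every $A_k$, and $r\ge 2$ with $\dim V_j=m_j<n$. On each block I invoke the hypothesis that $C(d,m_j)$ is irreducible to approximate the restricted tuple $(A_1|_{V_j},\ldots ,A_d|_{V_j})$ by a commuting tuple on $V_j$ whose first entry has $m_j$ distinct eigenvalues. Assembling these block-wise approximations in block-diagonal form yields commuting tuples on $\mathbb{F}^n$ approximating $(A_1,\ldots ,A_d)$; since the spectra on different blocks stay in disjoint neighbourhoods of the (distinct) eigenvalues of $A_1$ for small enough perturbations, all $n$ eigenvalues of the combined first entry are distinct, so that first entry is generic on $\mathbb{F}^n$.

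Suppose instead that every $A_i$ has a single eigenvalue $\alpha_i$, so $A_i=\alpha_iI+N_i$ with the $N_i$ commuting and nilpotent. Then $(N_1,\ldots ,N_d)\in N(d,n)$, which is irreducible by hypothesis, so Corollary~\ref{dense} provides approximations by tuples $(N_1',\ldots ,N_d')\in R_1(d,n)$ with $N_1'$ 1-regular. Setting $X=\alpha_1 I+N_1'$, each $\alpha_iI+N_i'$ is a polynomial $p_i(X)$, and $(X,p_2(X),\ldots ,p_d(X))$ is the induced approximation of $(A_1,\ldots ,A_d)$. The matrix $X$ is 1-regular but not yet generic, since it has the single eigenvalue $\alpha_1$. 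As a final step I perturb $X$ within the open set of 1-regular matrices to a nearby 1-regular matrix $X'$ with $n$ distinct eigenvalues (this is an open dense condition), and replace the tuple by $(X',p_2(X'),\ldots ,p_d(X'))$. Commutativity is preserved since all entries are polynomials in $X'$, and the new tuple still approximates $(A_1,\ldots ,A_d)$.

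The main anticipated obstacle is the criterion itself: that $C(d,n)$ is irreducible iff it is the closure of tuples with first entry generic. This is the non-nilpotent analog of Proposition~\ref{1-regular} together with Corollary~\ref{dense}, and I rely on its being quoted from \cite{S3}. The remaining technical point is the bookkeeping in Case~1 to certify that a block-diagonal combination of approximations with first-entry-generic-per-block yields first-entry-generic on all of $\mathbb{F}^n$; this uses only that the eigenvalues of $A_1$ on different generalized eigenspaces are distinct, so for sufficiently small perturbations the block spectra remain separated.
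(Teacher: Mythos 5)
Your proof is correct, and the overall case split (some $A_i$ has two eigenvalues vs. every $A_i$ has a single eigenvalue) matches the paper's. Where you diverge is in the choice of criterion: the paper proves $C(d,n)=\overline{R^1(d,n)}$, where $R^1(d,n)$ is the set of tuples containing a \emph{1-regular} matrix, whereas you aim at the set of tuples containing a \emph{generic} matrix. The two criteria are equivalent (both are quoted as consequences of \cite{S2}/\cite{S3}), but the 1-regular criterion is the more economical target here for exactly the reason your Case~2 has to work harder: a translate $\alpha_1I+N_1'$ of a 1-regular nilpotent matrix is automatically 1-regular, so the paper concludes immediately that $R^1(d,n)$ is invariant under translation by scalar matrices and its closure absorbs the translated nilpotent tuple; your choice of ``generic'' is not preserved under this translation (the translate has one eigenvalue), forcing the extra perturbation $X\mapsto X'$ along the 1-regular locus. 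That step is valid — the map $X\mapsto(X,p_2(X),\ldots,p_d(X))$ is polynomial and sends the dense open set of generic matrices into the set of tuples with generic first entry — but it is precisely the overhead that the paper avoids.

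In Case~1 you unpack the block-diagonal decomposition by generalized eigenspaces and the induction on $C(d,m_j)$, which is essentially the content of the ``Lemma~5 of \cite{S3} extended to $d$ matrices'' step the paper simply cites. One caveat on presentation: you phrase the final certification (``spectra stay in disjoint neighbourhoods for small enough perturbations'') and your Case~2 (``approximations'', ``nearby'') in metric language, while the paper works over an arbitrary algebraically closed field with the Zariski topology. The Zariski-correct formulation of your Case~1 step is that the locus in $\prod_j C(d,m_j)$ where the assembled first entry is generic is a nonempty open subset of an irreducible variety, hence dense, and one then applies continuity of the block-assembly map; in Case~2 one similarly replaces ``nearby 1-regular matrix'' by the fact that the generic locus is open dense in $M_n(\mathbb{F})$ and applies $\varphi(\overline{U})\subseteq\overline{\varphi(U)}$. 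Once rephrased that way the argument is rigorous and gives the statement.
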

\begin{proof}
We have to show that $\overline{R^1(d,n)}=C(d,n)$, where $R^1(d,n)$ is the set of all $d$-tuples $(A_1,\ldots ,A_d)\in C(d,n)$ where at least one of the matrices $A_1,\ldots ,A_d$ is 1-regular (see \cite{S2}). Let $(A_1,\ldots ,A_d)\in C(d,n)$ be an arbitrary $d$-tuple. If one of the matrices $A_1,\ldots ,A_d$ has two distinct eigenvalues, then extending the proof of Lemma 5 of \cite{S3} (which clearly holds in any characteristic) to any number of matrices we get $(A_1,\ldots ,A_d)\in \overline{R^1(d,n)}$. In the sequel we can therefore assume that $A_i$ has only one eigenvalue $\lambda _i$ for each $i=1,\ldots ,d$. If we denote by $I$ the identity matrix, then $(A_1-\lambda _1I,\ldots ,A_d-\lambda _dI)\in N(d,n)=\overline{R_1(d,n)}\subseteq \overline{R^1(d,n)}$ by the assumption on irreducibility of the variety $N(d,n)$. However, the sets $R^1(d,n)$ and $\{(X_1+\lambda _1I,\ldots ,X_d+\lambda _dI);(X_1,\ldots ,X_d)\in R^1(d,n)\}$ are clearly the same, so their closures also are, and we get $(A_1,\ldots ,A_d)\in \overline{R^1(d,n)}$.
\end{proof}

\begin{cor}
If the variety $N(d,m)$ is irreducible for each $m\le n$, then the variety $C(d,n)$ is also irreducible.
\end{cor}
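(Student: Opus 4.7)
The plan is to derive this immediately from the preceding corollary by induction on $n$. The preceding corollary states that $C(d,n)$ is irreducible as soon as $C(d,m)$ is irreducible for every $m<n$ and $N(d,n)$ is irreducible, so the hypothesis ``$N(d,m)$ irreducible for every $m\le n$'' is precisely the right shape for an inductive argument.

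First I would dispose of the base case: for $n=1$ the variety $C(d,1)\cong \mathbb{F}^d$ is irreducible (and trivially $N(d,1)=\{0\}$), so the statement holds. For the inductive step, suppose the implication has been established for all positive integers strictly smaller than $n$, and assume $N(d,m)$ is irreducible for each $m\le n$. Then for any fixed $m<n$, the hypothesis ``$N(d,m')$ irreducible for every $m'\le m$'' is satisfied (since $m'\le m\le n$), so the inductive hypothesis yields irreducibility of $C(d,m)$ for every $m<n$.

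Having secured irreducibility of $C(d,m)$ for all $m<n$ together with the given irreducibility of $N(d,n)$, I would invoke the previous corollary to conclude that $C(d,n)$ is irreducible, completing the induction.

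There is no real obstacle here: the statement is essentially a bookkeeping consequence of the previous corollary, the only substantive content being the observation that the hypothesis propagates correctly down the induction. The real work has already been done in proving the previous corollary (which used the perturbation argument of \cite{S3} to handle the case of multiple eigenvalues and then reduced the single-eigenvalue case to $N(d,n)$ by translating by scalar multiples of $I$).
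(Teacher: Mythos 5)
Your induction is exactly the intended (and essentially forced) argument: the paper states this corollary without proof precisely because it follows from the previous corollary by the straightforward induction on $n$ that you spell out. The proposal is correct and matches the paper's approach.
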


Since we know that $C(4,4)$ is reducible and $N(4,n)$ is irreducible for $n\le 3$, we obtain:

\begin{cor}\label{44}
The variety $N(4,4)$ is reducible.
\end{cor}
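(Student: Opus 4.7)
The plan is to deduce this corollary immediately from the previous one by taking its contrapositive. The statement directly above asserts that if $N(d,m)$ is irreducible for every $m\le n$, then $C(d,n)$ is irreducible; equivalently, if $C(d,n)$ is reducible, then $N(d,m)$ must be reducible for at least one $m\le n$.

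First I would specialize this to $d=n=4$. By the result of Guralnick \cite{Gu} recalled in the introduction, $C(4,4)$ is reducible. Hence the contrapositive yields some $m\in\{1,2,3,4\}$ with $N(4,m)$ reducible.

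Next I would eliminate the cases $m\le 3$. For $m=1$ the variety $N(4,1)$ is just the singleton $\{(0,0,0,0)\}$, which is trivially irreducible. For $m=2$ and $m=3$ the irreducibility of $N(d,n)$ was already recorded in the introduction (Proposition 5.1.1 and Theorem 7.1.2 of \cite{N}, quoted for all $d$ when $n\le 3$). Therefore the only value of $m$ that can produce a reducible $N(4,m)$ is $m=4$, and we conclude that $N(4,4)$ is reducible.

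There is no real obstacle to overcome: the proof is a short logical chase, with all the substantive work already done in the previous corollary and in the cited irreducibility results for small $n$.
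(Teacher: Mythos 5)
Your argument is correct and is exactly the one the paper intends: the sentence preceding the corollary ("Since we know that $C(4,4)$ is reducible and $N(4,n)$ is irreducible for $n\le 3$, we obtain:") is precisely the contrapositive chase you spell out. Nothing to add.
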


We conclude this section with two lemmas telling us which reductions we can do while proving that $N(d,n)=\overline{R(d,n)}$. The first lemma is a simple consequence of the fact that in an irreducible variety any nonempty open subset is dense.

\begin{lemma}\label{irr-open}
If $\mathcal{V}\subseteq N(d,n)$ is any irreducible variety and $\mathcal{U}\subseteq \mathcal{V}$ any nonempty open subset such that $\mathcal{U}\subseteq \overline{R(d,n)}$, then $\mathcal{V}\subseteq \overline{R(d,n)}$.
\end{lemma}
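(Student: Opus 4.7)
The plan is to invoke the standard fact from the Zariski topology that in an irreducible variety every nonempty open subset is dense, which is essentially exactly the hint stated just before the lemma. So the proof will be only a couple of lines.

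First, I would note that since $\mathcal{V}$ is an irreducible topological space (with the subspace topology from $N(d,n)$), any nonempty open subset $\mathcal{U} \subseteq \mathcal{V}$ satisfies $\overline{\mathcal{U}}^{\mathcal{V}} = \mathcal{V}$, where the closure is taken in $\mathcal{V}$. Since $\mathcal{V}$ is itself closed in $N(d,n)$ (being a subvariety), this is the same as the closure of $\mathcal{U}$ in $N(d,n)$, so $\overline{\mathcal{U}} = \mathcal{V}$.

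Next, I would use the hypothesis $\mathcal{U} \subseteq \overline{R(d,n)}$. Taking closures on both sides and noting that $\overline{R(d,n)}$ is already closed, we get
$$\mathcal{V} = \overline{\mathcal{U}} \subseteq \overline{\overline{R(d,n)}} = \overline{R(d,n)},$$
which is precisely the desired conclusion.

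There is really no main obstacle here: the lemma is a packaging of the density-of-open-sets-in-irreducible-varieties principle, recorded as a lemma because it will be invoked many times in the sequel (typically when one wants to check $\mathcal{V} \subseteq \overline{R(d,n)}$ by restricting attention to a generic point of $\mathcal{V}$ with extra nondegeneracy properties). The only thing worth being careful about is clarifying that openness of $\mathcal{U}$ in $\mathcal{V}$ and irreducibility of $\mathcal{V}$ together force density, which I would state explicitly for the reader.
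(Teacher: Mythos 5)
Your proof is correct and follows exactly the route the paper intends: the paper offers no written proof, only the remark just before the lemma that it is ``a simple consequence of the fact that in an irreducible variety any nonempty open subset is dense,'' which is precisely what you spell out. Your extra care in noting that the closure of $\mathcal{U}$ inside $\mathcal{V}$ agrees with its closure in $N(d,n)$ because $\mathcal{V}$ is closed is a welcome clarification but not a different argument.
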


The lemma tells us that if we want to prove that any $d$-tuple from some irreducible subvariety of $N(d,n)$ belongs to $\overline{R(d,n)}$, then we can assume any nonempty open condition on the $d$-tuple.

\begin{lemma}\label{reduction}
Let $(A_1,\ldots ,A_d)\in N(d,n)$ be a $d$-tuple of commuting nilpotent matrices, $P\in GL_n(\mathbb{F})$ an arbitrary invertible matrix, $Q\in GL_n(\mathbb{F})$ an invertible matrix satisfying $Q^{-1}A_1^TQ=A_1$, and $p_2,\ldots ,p_d\in \mathbb{F}[t]$ arbitrary polynomials with zero constant terms. Then $(A_1,\ldots ,A_d)\in \overline{R(d,n)}$ if and only if any of the following holds.
\begin{enumerate}
\item[(a)]
$(P^{-1}A_1P,\ldots ,P^{-1}A_dP)\in \overline{R(d,n)}$.
\item[(b)]
$(B_1,\ldots ,B_d)\in \overline{R(d,n)}$ where $(B_1,\ldots ,B_d)\in N(d,n)$ is any $d$-tuple of nilpotent commuting matrices such that the linear spans of $A_1,\ldots ,A_d$ and of $B_1,\ldots ,B_d$ are the same.
\item[(c)]
$(A_1,A_2-p_2(A_1),\ldots ,A_d-p_d(A_1))\in \overline{R(d,n)}$.
\item[(d)]
$(A_1^T,\ldots ,A_d^T)\in \overline{R(d,n)}$.
\item[(e)]
$(A_1,Q^{-1}A_2^TQ,\ldots ,Q^{-1}A_d^TQ)\in \overline{R(d,n)}$.
\end{enumerate}
\end{lemma}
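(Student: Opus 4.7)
The overall plan is to realize each of (a)--(e) as a consequence of a biregular automorphism of $N(d,n)$ that maps one tuple to the other and preserves either $R(d,n)$ or, at least, $R_1(d,n)$. Since $\overline{R(d,n)} = \overline{R_1(d,n)}$ (as observed in the proof of Corollary \ref{dense}), any Zariski homeomorphism of $N(d,n)$ that sends $R_1(d,n)$ into itself automatically preserves the closure $\overline{R(d,n)}$, and the desired equivalence follows immediately from applying the automorphism and its inverse.

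Parts (a) and (d) are essentially formal: simultaneous conjugation by $P \in GL_n(\mathbb{F})$ and simultaneous transposition are linear automorphisms of $M_n(\mathbb{F})^d$ preserving $N(d,n)$, and each preserves the Jordan canonical form of every matrix in a tuple. In particular the property of being 1-regular is preserved, so $R_1(d,n)$ is invariant under both. Part (e) then follows at once by composing (d) with (a) applied to $P = Q$: the hypothesis $Q^{-1}A_1^T Q = A_1$ ensures the first coordinate returns to $A_1$, and the other coordinates become $Q^{-1}A_i^T Q$.

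For (b) I would use the natural $GL_d(\mathbb{F})$-action on $d$-tuples, together with the linear-algebra fact that two $d$-tuples in $M_n(\mathbb{F})$ with the same linear span $V$ differ by some $C \in GL_d(\mathbb{F})$ acting from the left. Indeed, viewing the two tuples as surjective linear maps $\mathbb{F}^d \twoheadrightarrow V$, one picks complements to the kernels, uses the induced isomorphisms with $V$ to produce a linear bijection between the two complements, and extends by any isomorphism between the kernels. The resulting $C$ acts linearly and biregularly on $N(d,n)$ and preserves $R(d,n)$, since the linear span of a tuple is $GL_d$-invariant by definition.

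The main subtlety is part (c). I would define
$$\Phi(X_1,X_2,\ldots,X_d) = (X_1,\, X_2 - p_2(X_1),\, \ldots,\, X_d - p_d(X_1))$$
and check three things: (i) $\Phi$ maps $N(d,n)$ into itself, because each $p_i(X_1)$ is nilpotent (zero constant term), commutes with every $X_j$ (since $X_j$ commutes with $X_1$), and a difference of commuting nilpotents is nilpotent; (ii) $\Phi$ is a biregular automorphism of $N(d,n)$, with inverse obtained by flipping the sign of each $p_i$; (iii) $\Phi$ fixes the first coordinate, so it restricts to a biregular automorphism of $R_1(d,n)$ and hence of $\overline{R_1(d,n)} = \overline{R(d,n)}$. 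The hard part here is noticing that $\Phi$ does \emph{not} preserve $R(d,n)$ in general --- the linear span of the tuple changes unpredictably under $\Phi$, so a linear combination that was 1-regular need not remain so --- and consequently one must work with $R_1(d,n)$ rather than $R(d,n)$, exploiting that $R_1(d,n)$ is open and dense in the irreducible variety $\overline{R(d,n)}$.
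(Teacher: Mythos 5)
Your proposal is correct and follows essentially the same route as the paper: realize each reduction as a polynomial bijection of $N(d,n)$ with polynomial inverse that preserves $R(d,n)$, or, in case (c), only $R_1(d,n)$, and conclude using $\overline{R(d,n)}=\overline{R_1(d,n)}$. The one place you go slightly beyond the paper is part (b), where you make explicit why a $C\in GL_d(\mathbb{F})$ taking $(A_1,\ldots,A_d)$ to $(B_1,\ldots,B_d)$ exists --- a small gap the paper leaves implicit --- but the overall method is identical.
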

\begin{proof}
Let $\varphi \colon N(d,n)\to N(d,n)$ be a polynomial map that maps the set $R(d,n)$ to its closure. Since polynomial maps are continuous in the Zariski topology, it follows that $\varphi (\overline{R(d,n)})\subseteq \overline{\varphi (R(d,n))}\subseteq \overline{R(d,n)}$. Moreover, if $\varphi$ is invertible and its inverse is also a polynomial map that maps the set $R(d,n)$ to its closure, then $\varphi (\overline{R(d,n)})=\overline{R(d,n)}$. Now the equivalences in (a), (b), (d) and (e) follow immediately if the map $\varphi$ is respectively conjugation of the matrices in the $d$-tuple by $P$, the action of a fixed element of $GL_d(\mathbb{F})$ on the $d$-tuple that maps $(A_1,\ldots ,A_d)$ to $(B_1,\ldots ,B_d)$, the transposition, and the composition of the conjugation and the transposition. However, since $\overline{R_1(d,n)}=\overline{R(d,n)}$, the conclusion $\varphi (\overline{R(d,n)})=\overline{R(d,n)}$ follows also if $\varphi$ is bijective with polynomial inverse and $\varphi$ and $\varphi ^{-1}$ map $R_1(d,n)$ to its closure. The equivalence in (c) then follows immediately if $\varphi (X_1,X_2,\ldots ,X_d)=(X_1,X_2-p_2(X_1),\ldots ,X_d-p_d(X_1))$.
\end{proof}

\section{Commutative subalgebras of $M_n(\mathbb{F})$ and reducibility of $N(d,n)$ for $d,n\ge 4$}

By Corollary \ref{44} the variety $N(4,4)$ is reducible. In this section we generalize this result. In other words, we show that $N(d,n)$ is reducible for all $d,n\ge 4$. To do so we use the connection between irreducibility of $N(d,n)$ and the dimension of commutative subalgebras of $M_n(\mathbb{F})$ generated by $d$ nilpotent matrices. For commuting matrices $A_1,\ldots ,A_d\in M_n(\mathbb{F})$ let $\mathbb{F}[A_1,\ldots ,A_d]$ denote the unital algebra generated by the matrices $A_1,\ldots ,A_d$. Using the idea from \cite{Gu} we first show that irreducibility of the variety $N(d,n)$ implies that each commutative unital subalgebra of $M_n(\mathbb{F})$ generated by $d$ nilpotent matrices is at most $n$-dimensional. Then for each $n\ge 4$ we construct a $(n+1)$-dimensional commutative unital subalgebra of $M_n(\mathbb{F})$ that is generated by 4 nilpotent matrices, which shows reducibility of $N(d,n)$ for all $d,n\ge 4$. We start with the following proposition whose proof is just extension of the proof of Theorem 1 of \cite{Gu} to an arbitrary number of matrices.

\begin{prop}\label{dim_algebra-closed}
For each positive integer $r$ the set
$$\{(A_1,\ldots ,A_d)\in C(d,n);\dim \mathbb{F}[A_1,\ldots ,A_d]\le r\}$$
is closed in the Zariski topology (i.e. it is an affine variety).
\end{prop}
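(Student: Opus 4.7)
The plan is to realize the algebra $\mathbb{F}[A_1,\ldots,A_d]$ as the linear span of a fixed \emph{finite} set of monomials whose entries depend polynomially on the entries of the tuple, and then express the dimension bound as the vanishing of minors of a suitable matrix-valued polynomial map.

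First I would observe that, by the Cayley--Hamilton theorem, $A_i^n$ is an $\mathbb{F}$-linear combination of $I,A_i,\ldots,A_i^{n-1}$ for each $i$, so every power $A_i^k$ with $k\ge n$ can be reduced. Since the matrices commute, it follows that
\[
\mathbb{F}[A_1,\ldots,A_d]=\mathrm{span}_{\mathbb{F}}\bigl\{A_1^{i_1}A_2^{i_2}\cdots A_d^{i_d}\mid 0\le i_j\le n-1\bigr\}.
\]
Fix an enumeration $M_1,\ldots,M_N$ of the $N=n^d$ multi-indices appearing above, and identify each $n\times n$ matrix with its vectorization in $\mathbb{F}^{n^2}$. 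This gives a polynomial map
\[
\Phi\colon C(d,n)\to M_{n^2\times N}(\mathbb{F}),\qquad (A_1,\ldots,A_d)\mapsto \bigl[\mathrm{vec}(M_1)\mid\cdots\mid\mathrm{vec}(M_N)\bigr],
\]
whose entries are polynomials in the entries of $A_1,\ldots,A_d$ (they are just certain products of at most $(n-1)d$ matrix entries, summed).

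By construction, $\dim\mathbb{F}[A_1,\ldots,A_d]=\mathrm{rank}\,\Phi(A_1,\ldots,A_d)$. Therefore the set in question is precisely
\[
\{(A_1,\ldots,A_d)\in C(d,n)\mid \mathrm{rank}\,\Phi(A_1,\ldots,A_d)\le r\},
\]
which is cut out inside $C(d,n)$ by the vanishing of all $(r+1)\times (r+1)$ minors of $\Phi$. Since these minors are polynomials in the entries of $(A_1,\ldots,A_d)$, and $C(d,n)$ is itself an affine variety, the described set is Zariski closed.

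There is really no serious obstacle here; the only thing to be careful about is that one actually has a \emph{finite} list of monomials that spans the algebra regardless of the tuple, and this is exactly what Cayley--Hamilton together with commutativity guarantees. Everything else is the standard fact that rank $\le r$ is a closed condition.
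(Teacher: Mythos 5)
Your proof is correct and takes essentially the same approach as the paper: both form the $n^2\times n^d$ matrix whose columns are the vectorized monomials $A_1^{k_1}\cdots A_d^{k_d}$ with $0\le k_i\le n-1$, identify the dimension of $\mathbb{F}[A_1,\ldots,A_d]$ with the rank of that matrix, and conclude via the fact that bounded rank is a closed condition (the paper leaves the Cayley--Hamilton reduction and the minors implicit, but the argument is the same).
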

\begin{proof}
Consider the $n^2\times n^d$ matrix whose columns are the matrices $A_1^{k_1}\cdots A_d^{k_d}$ considered as column vectors of size $n^2$, where $0\le k_i\le n-1$ for $i=1,\ldots ,d$. The rank of this matrix is equal to the dimension of the algebra $\mathbb{F}[A_1,\ldots ,A_d]$. However, the condition that the rank of a matrix is bounded by $r$ is a closed condition, hence the proposition follows.
\end{proof}

\begin{cor}
If the variety $N(d,n)$ is irreducible, then each commutative unital subalgebra of $M_n(\mathbb{F})$ generated by $d$ nilpotent matrices is at most $n$-dimensional.
\end{cor}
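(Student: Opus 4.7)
The plan is to combine Proposition \ref{dim_algebra-closed} with Corollary \ref{dense}. The key observation is that tuples in $R_1(d,n)$ automatically generate an algebra of dimension at most $n$: if $(A_1,\ldots ,A_d)\in R_1(d,n)$, then $A_1$ is a 1-regular nilpotent matrix, so every matrix commuting with $A_1$ is a polynomial in $A_1$; in particular $A_2,\ldots ,A_d\in \mathbb{F}[A_1]$, hence $\mathbb{F}[A_1,\ldots ,A_d]=\mathbb{F}[A_1]$, which has dimension $n$ (since the minimal polynomial of the $n\times n$ nilpotent 1-regular matrix $A_1$ equals $t^n$).

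Now set
$$\mathcal{W}_n=\{(A_1,\ldots ,A_d)\in C(d,n);\ \dim \mathbb{F}[A_1,\ldots ,A_d]\le n\}.$$
By Proposition \ref{dim_algebra-closed}, $\mathcal{W}_n$ is closed in $C(d,n)$, and by the preceding observation $R_1(d,n)\subseteq \mathcal{W}_n$. Taking Zariski closures, and using the equality $\overline{R_1(d,n)}=\overline{R(d,n)}$ established in the proof of Corollary \ref{dense}, we get $\overline{R(d,n)}\subseteq \mathcal{W}_n$.

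If $N(d,n)$ is irreducible, Corollary \ref{dense} yields $N(d,n)=\overline{R(d,n)}\subseteq \mathcal{W}_n$. Hence for every $d$-tuple of commuting nilpotent matrices $A_1,\ldots ,A_d\in M_n(\mathbb{F})$ we have $\dim \mathbb{F}[A_1,\ldots ,A_d]\le n$, which is precisely the claim.

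There is no real obstacle here; the only point to double-check is the step showing that tuples in $R_1(d,n)$ generate an algebra of dimension at most $n$, but this is immediate from the standard characterization of the centralizer of a 1-regular matrix as the polynomial algebra generated by it.
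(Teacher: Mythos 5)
Your proof is correct and matches the paper's argument: both use Proposition \ref{dim_algebra-closed} to see that the tuples generating an algebra of dimension at most $n$ form a closed set, note that $R_1(d,n)$ lies in this set because all matrices commuting with a 1-regular $A_1$ are polynomials in $A_1$, and then invoke irreducibility (via $N(d,n)=\overline{R_1(d,n)}$) to conclude. The only cosmetic difference is that you route the last step explicitly through Corollary \ref{dense}, whereas the paper states $\overline{R_1(d,n)}=N(d,n)$ directly.
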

\begin{proof}
By the previous proposition the set
$$\mathcal{V}=\{(A_1,\ldots ,A_d)\in N(d,n);\dim \mathbb{F}[A_1,\ldots ,A_d]\le n\}$$
is a subvariety of $N(d,n)$. It contains the set $R_1(d,n)$, since if $A_1$ is 1-regular and $A_2,\ldots ,A_d$ commute with $A_1$, then they are polynomials in $A_1$, and $\mathbb{F}[A_1,\ldots ,A_d]=\mathbb{F}[A_1]$. Therefore it contains also the closure $\overline{R_1(d,n)}$ which is equal to $N(d,n)$, since $N(d,n)$ is by the assumption irreducible.
\end{proof}

Now we can prove reducibility of $N(d,n)$ if $n$ and $d$ are both at least 4.

\begin{thm}
For all positive integers $d,n\ge 4$ the variety $N(d,n)$ is reducible.
\end{thm}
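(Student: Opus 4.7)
The plan is to apply the previous corollary, which states that if $N(d,n)$ is irreducible then every commutative unital subalgebra of $M_n(\mathbb{F})$ generated by $d$ nilpotent matrices has dimension at most $n$. It therefore suffices, for each $n\ge 4$, to produce a commutative unital subalgebra of $M_n(\mathbb{F})$ of dimension $n+1$ that is generated by four nilpotent matrices; the case of general $d\ge 4$ then follows by appending $d-4$ zero matrices to the four-tuple, which does not enlarge the generated algebra but gives a $d$-tuple of commuting nilpotent matrices violating the corollary.

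For the construction I would take
\[
A_1=\sum_{i=1}^{n-3}E_{i,i+1},\qquad A_2=E_{1,n},\qquad A_3=E_{n-1,n-2},\qquad A_4=E_{n-1,n},
\]
so that $A_1$ is a single Jordan block of size $n-2$ sitting in the top-left $(n-2)\times(n-2)$ corner of $M_n(\mathbb{F})$, and $A_2,A_3,A_4$ are individual matrix units placed at corner positions outside that block. Each generator is nilpotent; the powers $I,A_1,A_1^2,\dots,A_1^{n-3}$ are linearly independent and span the subalgebra $\mathbb{F}[A_1]$, of dimension $n-2$; and the units $A_2,A_3,A_4$ are supported at positions disjoint from the top-left block, so they are linearly independent from $\mathbb{F}[A_1]$ and from one another.

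The heart of the verification is that every product $A_iA_j$ with $(i,j)\ne(1,1)$ vanishes, so the generated algebra is the internal direct sum $\mathbb{F}[A_1]\oplus\mathbb{F}A_2\oplus\mathbb{F}A_3\oplus\mathbb{F}A_4$ of dimension $(n-2)+3=n+1$. This reduces to a short case check: the image of each $A_j$ for $j\ge 2$ is contained in $\mathrm{span}(e_1,e_{n-1})$, which is annihilated by $A_1$; conversely the image of $A_1$ lies in $\mathrm{span}(e_1,\dots,e_{n-3})$, which is annihilated by each $A_j$ with $j\ge 2$; and the rows and columns of $A_2,A_3,A_4$ are chosen so that all three pairwise products among them also vanish. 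Invoking the previous corollary then forces $N(d,n)$ to be reducible for all $d\ge 4$.

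The main obstacle I anticipate is producing a configuration that is uniform in $n$, rather than the bookkeeping above. A purely Schur-type choice in which all four generators are square-zero with all pairwise products zero yields an algebra of dimension only $1+4=5$, which equals $n+1$ only when $n=4$. For $n\ge 5$ one genuinely needs additional algebra elements beyond the generators themselves, and the construction above supplies them via the higher powers of the single Jordan block $A_1$; the three matrix units $A_2,A_3,A_4$ must then be positioned precisely so that they commute with $A_1$ and with one another while producing no new elements.
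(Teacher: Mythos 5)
Your proposal is correct and follows essentially the same route as the paper: invoke the preceding corollary and exhibit, for each $n\ge 4$, a commutative unital $(n+1)$-dimensional subalgebra of $M_n(\mathbb{F})$ generated by four commuting nilpotent matrices, then pad with zero matrices for $d>4$. The only (cosmetic) difference is that the paper takes $A_1 = J_{n-2}\oplus J_2$ whereas you take $A_1 = J_{n-2}\oplus 0\oplus 0$; since these differ by $A_4=E_{n-1,n}$, the two four-tuples have the same linear span and generate the same algebra, so the constructions are equivalent.
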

\begin{proof}
By the previous corollary it suffices to find for each $n\ge 4$ some commutative unital subalgebra of $M_n(\mathbb{F})$ of dimension $n+1$ which is generated by 4 nilpotent matrices. Let $A_1$ be the nilpotent matrix in Jordan canonical form which has one Jordan block of order $n-2$ and one Jordan block of order 2, and let $A_2=\mathbf{e_1}\mathbf{e_n}^T$, $A_3=\mathbf{e_{n-1}}\mathbf{e_{n-2}}^T$ and $A_4=\mathbf{e_{n-1}}\mathbf{e_n}^T$ where $\mathbf{e_i}$ denotes the $i$-th standard basis vector of $\mathbb{F}^n$, i.e. the vector with 1 on the $i$-th component and 0 elsewhere. Then the algebra $\mathbb{F}[A_1,A_2,A_3,A_4]$ is generated by 4 nilpotent commuting matrices and spanned by $I,A_1,\ldots, A_1^{n-3},A_2,A_3,A_4$ as a vector space (i.e. $\dim \mathbb{F}[A_1,A_2,A_3,A_4]=n+1$), which proves the theorem.
\end{proof}

\section{Pairs of commuting nilpotent matrices in the centralizer of a nilpotent matrix}

In Section 2 we have seen that $N(3,n)$ is irreducible if and only if it is equal to the Zariski closure of $R(3,n)$. The proof of irreducibility of $N(3,n)$ is usually based on perturbation by triples of 1-regular nilpotent commuting matrices, in the sense that through each triple from $N(3,n)$ we find an irreducible curve (typically a line) in $N(3,n)$ with a Zariski open (and dense) subset contained in $\overline{R(3,n)}$. However, in general the perturbed triples are not easy to find. Much easier to find, when they exist, are commutative nilpotent 1-regular perturbations of only two of the matrices of the triple in the centralizer of the third matrix. (For an example when this is not possible, see Proposition \ref{321} below.) Similarly as in the proofs of irreducibility of $C(3,n)$, it is therefore convenient to study varieties of pairs of commuting nilpotent matrices in the centralizers of given nilpotent matrices. For a nilpotent matrix $A\in M_n(\mathbb{F})$ denote by $C(A)$ the centralizer of $A$, by $N(A)$ the nilpotent centralizer of $A$ (i.e. the set of all {\em nilpotent} $n\times n$ matrices that commute with $A$), and let
$$C_2(A)=\{(B,C)\in C(A)\times C(A);BC=CB\}$$
and
$$N_2(A)=\{(B,C)\in C_2(A);B\, \mathrm{and}\, C\, \mathrm{are}\, \mathrm{nilpotent}\}.$$
Clearly all these sets are affine varieties. The variety $C_2(A)$ is irreducible if $A$ is 3-regular, cf. Corollary 10 of \cite{NS} and Theorem 12 of \cite{S2}. In this section we study analogous problem for $N_2(A)$. In particular we prove that the variety $N_2(A)$ is irreducible if $A$ is 2-regular nilpotent matrix. However, the same does not hold if $A$ is 3-regular. For the proofs of these results we now introduce some additional notation. For a nilpotent $n\times n$ matrix $A$ we define
$$D(A)=\{B\in N(A);\dim \mathbb{F}[A,B]=n\}$$
and
$$D_2(A)=\{(B,C)\in N_2(A);\dim \mathbb{F}[A,B]=n\}.$$
Note that, by Theorem 1.1 of \cite{NSa}, for commuting $n\times n$ matrices $A$ and $B$ the condition $\dim \mathbb{F}[A,B]=n$ is equivalent to $\dim (C(A)\cap C(B))=n$ and to the condition that the algebra $\mathbb{F}[A,B]$ is self-centralizing.

\begin{lemma}
For any nilpotent matrix $A\in M_n(\mathbb{F})$ the sets $D(A)$ and $D_2(A)$ are Zariski open in $N(A)$ and $N_2(A)$, respectively.
\end{lemma}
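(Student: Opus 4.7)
The plan is to reduce the openness of $D(A)$ to a direct application of Proposition \ref{dim_algebra-closed}, and then to deduce openness of $D_2(A)$ by pulling back along a projection.

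For $D(A)$, I would first invoke Gerstenhaber's theorem: for any pair of commuting $n\times n$ matrices $X,Y$, the unital algebra $\mathbb{F}[X,Y]$ has dimension at most $n$. This lets me rewrite
$$D(A)=\{B\in N(A);\dim\mathbb{F}[A,B]=n\}=N(A)\setminus\{B\in N(A);\dim\mathbb{F}[A,B]\le n-1\}.$$
So it suffices to show that the subtracted set is closed in $N(A)$. Consider the polynomial (in fact affine) map $\iota\colon N(A)\to C(2,n)$ defined by $B\mapsto(A,B)$; it is continuous in the Zariski topology. By Proposition \ref{dim_algebra-closed} applied with $d=2$ and $r=n-1$, the set
$$S=\{(X,Y)\in C(2,n);\dim\mathbb{F}[X,Y]\le n-1\}$$
is closed in $C(2,n)$. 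Hence $\iota^{-1}(S)=\{B\in N(A);\dim\mathbb{F}[A,B]\le n-1\}$ is closed in $N(A)$, and $D(A)$ is its open complement.

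For $D_2(A)$, observe that whether a pair $(B,C)\in N_2(A)$ belongs to $D_2(A)$ depends only on the first coordinate $B$. The projection $\pi\colon N_2(A)\to N(A)$, $(B,C)\mapsto B$, is continuous, and $D_2(A)=\pi^{-1}(D(A))$, which is open in $N_2(A)$ since $D(A)$ is open in $N(A)$.

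No step looks like a real obstacle: everything reduces to Proposition \ref{dim_algebra-closed} together with the upper bound $\dim\mathbb{F}[A,B]\le n$ from Gerstenhaber's theorem, which gives the key identification of "dimension exactly $n$" with the open condition "dimension at least $n$". The only small subtlety worth mentioning is that one should be precise that $\iota$ actually lands in $C(2,n)$ (which uses $B\in N(A)$, i.e. $AB=BA$) and that $\pi$ lands in $N(A)$ (which is immediate from the definition of $N_2(A)$).
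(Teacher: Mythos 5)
Your argument is correct and matches the paper's proof in essence: both establish openness of $D(A)$ by combining Proposition \ref{dim_algebra-closed} with the bound $\dim\mathbb{F}[A,B]\le n$ (which the paper attributes to Theorem 1 of \cite{Gu} rather than Gerstenhaber, but it is the same statement) to express $N(A)\setminus D(A)$ as a closed preimage, and both then get openness of $D_2(A)$ as the preimage of $D(A)$ under the first-coordinate projection. The paper writes this last step as $N_2(A)\setminus D_2(A)=\big((N(A)\setminus D(A))\times N(A)\big)\cap N(2,n)$, which is just your $\pi^{-1}(D(A))$ phrased via complements.
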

\begin{proof}
By Proposition \ref{dim_algebra-closed} the set $\{(A',B)\in C(2,n);\dim \mathbb{F}[A',B]\le r\}$ is Zariski closed for each positive integer $r$. On the other hand, by Theorem 1 of \cite{Gu} a unital algebra generated by two commuting $n\times n$ matrices can be at most $n$-dimensional, so the set $\mathcal{V}=\{(A',B)\in C(2,n);\dim \mathbb{F}[A',B]\ne n\}$ is closed, and therefore $(\{A\}\times N(A))\cap \mathcal{V}$ is closed. However, the later set is isomorphic to $N(A)\backslash D(A)$, which shows that $D(A)$ is open in $N(A)$.

For the second part of the lemma observe that
$$N_2(A)\backslash D_2(A)=\Big(\big((N(A)\backslash D(A)\big)\times N(A)\Big)\cap N(2,n),$$
which is closed, so $D_2(A)$ is open in $N_2(A)$.

\end{proof}

The following lemma was proved in \cite{Bas} (Lemma 2.3).

\begin{lemma}[Basili]
For any nilpotent matrix $A$ the variety $N(A)$ is irreducible.
\end{lemma}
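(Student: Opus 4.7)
The plan is to reduce to the case where $A$ has a single Jordan block size and then give an explicit linear description of $N(A)$ in that case. First, regarding $V=\mathbb{F}^n$ as an $\mathbb{F}[t]$-module with $t$ acting by $A$, I would decompose $V=\bigoplus_i V_i$ into isotypic components, where $V_i$ collects the Jordan blocks of $A$ of a given size $\lambda_i$. Since any element of $C(A)$ is an $\mathbb{F}[t]$-endomorphism and hence preserves this decomposition, $C(A)=\prod_i C(A|_{V_i})$, and an element is nilpotent on $V$ if and only if each component is nilpotent on $V_i$. Thus $N(A)=\prod_i N(A|_{V_i})$, and it suffices to treat the case in which $A$ has $m$ Jordan blocks, all of the same size $\ell$ (so $n=m\ell$).

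In this single-isotypic case, $V\cong (\mathbb{F}[t]/(t^\ell))^m$ as an $\mathbb{F}[t]$-module, whence $C(A)\cong \mathrm{End}_{\mathbb{F}[t]}(V)=M_m(R)$ with $R=\mathbb{F}[t]/(t^\ell)$. Any $B\in M_m(R)$ has a unique expansion $B=B_0+tB_1+\cdots+t^{\ell-1}B_{\ell-1}$ with $B_i\in M_m(\mathbb{F})$. The ideal $tM_m(R)$ is nilpotent (its $\ell$th power vanishes), so $B$ is nilpotent in $M_m(R)$ if and only if its image in $M_m(R/tR)\cong M_m(\mathbb{F})$---namely $B_0$---is nilpotent. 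Consequently, under the affine isomorphism $M_m(R)\cong M_m(\mathbb{F})^\ell$, the variety $N(A)$ is identified with $N_m\times M_m(\mathbb{F})^{\ell-1}$, where $N_m\subset M_m(\mathbb{F})$ is the ordinary nilpotent cone.

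To finish, note that $N_m$ is a classical irreducible variety (the closure of the $GL_m$-orbit of a regular nilpotent matrix), so $N_m\times M_m(\mathbb{F})^{\ell-1}$ is irreducible; combined with the product decomposition from the reduction step, this yields irreducibility of $N(A)$ for arbitrary nilpotent $A$. The only real obstacle is making the two algebra identifications rigorous---the isotypic decomposition and the identification $\mathrm{End}_{\mathbb{F}[t]}(V)\cong M_m(R)$---but both are standard facts from the structure theory of finitely generated modules over the principal ideal domain $\mathbb{F}[t]$, so no matrix-entry computations are required.
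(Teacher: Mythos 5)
Your reduction step is where the argument breaks down. You claim that because every $B\in C(A)$ is an $\mathbb{F}[t]$-module endomorphism of $V$, it must preserve the decomposition $V=\bigoplus_i V_i$ grouping Jordan blocks of equal size, and hence $C(A)=\prod_i C(A|_{V_i})$. This is false: the $V_i$ are not isotypic components in any sense that is respected by arbitrary module endomorphisms, because there are plenty of nonzero $\mathbb{F}[t]$-homomorphisms between cyclic modules of different lengths. Concretely, $\mathrm{Hom}_{\mathbb{F}[t]}\bigl(\mathbb{F}[t]/(t^a),\,\mathbb{F}[t]/(t^b)\bigr)\cong\mathbb{F}[t]/(t^{\min(a,b)})\neq 0$ for all $a,b\ge 1$. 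For the smallest example, take $A=\left[\begin{smallmatrix}0&1&0\\0&0&0\\0&0&0\end{smallmatrix}\right]$ (one $2\times 2$ block and one $1\times 1$ block); then $C(A)=\left\{\left[\begin{smallmatrix}a&b&c\\0&a&0\\0&d&e\end{smallmatrix}\right]\right\}$ is $5$-dimensional, with the entries $c$ and $d$ mixing the two summands, whereas your product $C(A|_{V_1})\times C(A|_{V_2})$ is only $3$-dimensional. Correspondingly $N(A)$ here is the $3$-dimensional affine space $\{a=e=0\}$, not the $1$-dimensional space your formula $N(A|_{V_1})\times N(A|_{V_2})$ would give.

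The underlying idea is salvageable, and in fact Basili's proof (and the explicit centralizer description recalled in the proof of Proposition~\ref{D_2irr} of this paper) goes along similar but more careful lines: grouping blocks by size, $C(A)$ has a block form $B=(B_{ij})$ where each $B_{ij}$ is block upper-triangular Toeplitz, the off-diagonal blocks $B_{ij}$ with $i\neq j$ are genuinely present (this is exactly the piece your product decomposition drops), and nilpotency of $B$ is equivalent to nilpotency of the ``leading terms'' $B_{11}^{(1)},\ldots,B_{ll}^{(1)}$ of the diagonal blocks alone. That last fact is the nontrivial step you would need to supply; once you have it, $N(A)$ becomes $N_{s_1}\times\cdots\times N_{s_l}\times\mathbb{F}^K$ for a suitable $K$, hence irreducible. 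Your analysis of the single-block-size case is correct, and so is your conclusion about $N_m\times M_m(\mathbb{F})^{\ell-1}$, but as written the general case does not reduce to it.
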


\begin{cor}
$\overline{D(A)}=N(A)$ for each nilpotent $n\times n$ matrix $A$.
\end{cor}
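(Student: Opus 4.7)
The plan rests on the two lemmas already proved: $D(A)$ is Zariski open in $N(A)$, and (Basili) $N(A)$ is irreducible. Since every nonempty open subset of an irreducible variety is dense, the corollary reduces to exhibiting a single element $B\in D(A)$---that is, a nilpotent matrix commuting with $A$ for which $\mathbb{F}[A,B]$ attains the maximum possible dimension $n$ allowed by Theorem 1 of \cite{Gu} (the same upper bound already invoked in the preceding lemma).

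Put $A$ in Jordan canonical form with blocks of sizes $\lambda_1\ge\lambda_2\ge\cdots\ge\lambda_k$ in a Jordan basis $\{e_{i,j}:1\le i\le k,\;1\le j\le\lambda_i\}$ satisfying $Ae_{i,j}=e_{i,j-1}$ (with $e_{i,0}=0$). I would define $B$ as the partial ``block-shift'' that sends each basis vector in block $i$ to the vector at the same height above the bottom of block $i+1$:
$$B e_{i,j}=\begin{cases} e_{i+1,\,j-(\lambda_i-\lambda_{i+1})} & \text{if } i<k \text{ and } j>\lambda_i-\lambda_{i+1},\\ 0 & \text{otherwise.}\end{cases}$$

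A case-by-case comparison of $ABe_{i,j}$ with $BAe_{i,j}$ (splitting on whether $j>\lambda_i-\lambda_{i+1}+1$, $j=\lambda_i-\lambda_{i+1}+1$, or $j\le\lambda_i-\lambda_{i+1}$) shows $AB=BA$, and $B^k=0$ gives nilpotency, so $B\in N(A)$. The crucial computation is that the vector $e_{1,\lambda_1}$---the top of the largest Jordan block---is a cyclic vector for $\mathbb{F}[A,B]$: a short induction gives $B^be_{1,\lambda_1}=e_{b+1,\lambda_{b+1}}$ for $0\le b\le k-1$, whence $A^aB^be_{1,\lambda_1}=e_{b+1,\lambda_{b+1}-a}$ for $0\le a\le\lambda_{b+1}-1$, and as $(a,b)$ ranges over this set the right-hand side hits every Jordan basis vector exactly once. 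Consequently $\dim\mathbb{F}[A,B]\ge n$; combined with the upper bound from \cite{Gu}, this forces $\dim\mathbb{F}[A,B]=n$ and hence $B\in D(A)$.

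The main obstacle I anticipate is simply getting the offset $\lambda_i-\lambda_{i+1}$ calibrated correctly so that the partial shift $B$ genuinely commutes with $A$ across the ``boundary'' indices $j=\lambda_i-\lambda_{i+1}+1$; once the definition is pinned down, both the commutativity check and the cyclic-vector computation reduce to routine book-keeping with the basis indices.
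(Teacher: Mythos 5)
Your proposal is correct, and the overall strategy (reduce to exhibiting a single element of $D(A)$, via openness of $D(A)$ and Basili's irreducibility of $N(A)$) matches the paper's. However, the concrete execution differs in two interesting ways. First, the explicit witness $B$: the paper chooses the block matrix with $K_i$ on the block superdiagonal, where $K_i$ is the $n_i\times n_{i+1}$ rectangle with $1$'s on its main diagonal; in the Jordan basis this is the ``bottom-aligned'' shift sending $e_{i,j}\mapsto e_{i-1,j}$, whereas your $B$ is the ``top-aligned'' shift $e_{i,j}\mapsto e_{i+1,\,j-(\lambda_i-\lambda_{i+1})}$ going the other way. Both give a nilpotent element of $C(A)$ whose algebra with $A$ is $n$-dimensional. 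Second, and more substantively, the method of certifying $\dim\mathbb{F}[A,B]=n$: the paper checks $\dim(\ker A\cap\ker B)=1$ and invokes Theorem 2 of Ko\v{s}ir \cite{K}, whereas you directly show $e_{1,\lambda_1}$ is a cyclic vector by computing $A^aB^b e_{1,\lambda_1}=e_{b+1,\lambda_{b+1}-a}$ and observing these hit every Jordan basis vector exactly once as $(a,b)$ ranges over the admissible pairs, which makes the $n$ monomials $A^aB^b$ linearly independent. Your route is more self-contained, avoiding the external citation at the cost of a slightly longer (but elementary) verification; the paper's is shorter given the citation. One minor remark: your verbal description of $B$ (``same height above the bottom of block $i+1$'') does not quite match the formula, which aligns tops rather than bottoms, but the formula itself is the correct and relevant object and the commutativity and cyclic-vector computations go through as you describe.
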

\begin{proof}
Because of the previous two lemmas we have to prove only that the set $D(A)$ is nonempty. Since for each invertible matrix $P\in GL_n(\mathbb{F})$ the varieties $N(A)$ and $N(P^{-1}AP)$ are clearly isomorphic, we can assume that the matrix $A$ is in the Jordan canonical form, i.e. $A=\left[
\begin{array}{cccc}J_1\\&J_2\\&&\ddots\\&&&J_k
\end{array}
\right]$ where $J_i$ is the $n_i\times n_i$ nilpotent Jordan block for each $i=1,2,\ldots ,k$, and $n_1\ge n_2\ge \cdots \ge n_k\ge 1$ (and $J_i=0$ in the case of $n_i=1$). Let $B=\left[
\begin{array}{cccc}0&K_1\\&0&\ddots\\&&\ddots&K_{k-1}\\&&&0
\end{array}
\right]$ where  for each $i=1,\ldots ,k-1$, $K_i$ is the $n_i\times n_{i+1}$ matrix of the form $\left[
\begin{array}{cccc}1\\&1\\&&\ddots\\&&&1\\&&&
\end{array}
\right]$. Then the matrix $B$ is nilpotent, it commutes with $A$ and $\dim (\ker A\cap \ker B)=1$, where $\ker X$ denotes the kernel of the matrix $X$. Theorem 2 of \cite{K} then implies that $\dim \mathbb{F}[A,B]=n$, i.e. $B\in D(A)$. Note that, although in \cite{K} it is assumed $\mathbb{F}=\mathbb{C}$, the proof of Theorem 2 of \cite{K} is valid over any field. 
\end{proof}

\begin{prop}\label{D_2irr}
For each nilpotent $n\times n$ matrix $A$ the variety $\overline{D_2(A)}$ is irreducible and of dimension $\dim C(A)-\dim \ker A+n-1$.
\end{prop}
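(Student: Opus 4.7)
The plan is to analyse the first projection $\pi\colon D_2(A)\to D(A)$, $(B,C)\mapsto B$, and realise $D_2(A)$ as a regular image of an irreducible variety. Note that $D(A)$ is itself irreducible, since by the preceding lemma and corollary it is a nonempty Zariski-open subset of the irreducible variety $N(A)$.

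First I would identify the fibres of $\pi$. For $B\in D(A)$ the hypothesis $\dim\mathbb{F}[A,B]=n$, combined with Theorem 1.1 of \cite{NSa}, gives $C(A)\cap C(B)=\mathbb{F}[A,B]$, so
$$\pi^{-1}(B)=\{C\in \mathbb{F}[A,B]:C\text{ is nilpotent}\}.$$
Since $A$ and $B$ are commuting nilpotents they can be simultaneously strictly upper triangularised; every element of $\mathbb{F}[A,B]$ then takes the form $\lambda I+N$ with $N$ strictly upper triangular, and is nilpotent precisely when $\lambda=0$. Hence $\pi^{-1}(B)$ is the Jacobson radical of the $n$-dimensional local algebra $\mathbb{F}[A,B]$, a linear subspace of dimension $n-1$.

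Next, to obtain irreducibility, I exhibit $D_2(A)$ as a regular image of an irreducible variety. Since $A^n=B^n=0$, set $S=\{(i,j):0\le i,j\le n-1,\ (i,j)\ne(0,0)\}$ and consider the polynomial map
$$\Phi\colon D(A)\times \mathbb{F}^{|S|}\to D_2(A),\qquad (B,(c_{ij}))\mapsto\Bigl(B,\sum_{(i,j)\in S}c_{ij}A^iB^j\Bigr).$$
The monomials $\{A^iB^j\}_{(i,j)\in S}$ span the Jacobson radical of $\mathbb{F}[A,B]$ for every $B\in D(A)$, so by the fibre description above $\Phi$ is surjective. Since $D(A)\times\mathbb{F}^{|S|}$ is a product of irreducible varieties and therefore irreducible, $D_2(A)$, and hence its closure $\overline{D_2(A)}$, is irreducible.

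Finally, applying the theorem on fibre dimensions (Theorem 11.12 of \cite{Har}, used as in the proof of Proposition \ref{1-regular}) to $\pi\colon D_2(A)\to D(A)$ yields
$$\dim \overline{D_2(A)}=\dim D(A)+(n-1)=\dim N(A)+(n-1).$$
Combined with the identity $\dim N(A)=\dim C(A)-\dim\ker A$, which follows from the block description of $C(A)$ determined by the Jordan type of $A$, this produces the claimed dimension. The principal technical point is to justify that $\mathbb{F}[A,B]$ is local with an $(n-1)$-dimensional nilradical; this rests on the simultaneous strict upper triangularisation of the commuting nilpotent pair $(A,B)$.
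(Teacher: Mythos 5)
Your proof is correct, and it follows the same overall skeleton as the paper's -- parametrize $D_2(A)$ by fixing $B\in D(A)$ and letting $C$ range over the nilpotent elements of the self-centralizing algebra $\mathbb{F}[A,B]$, then compute the dimension via the theorem on fibres -- but you handle the key technical step differently. The paper invokes the explicit basis $\{A^iB^j : 0\le j\le k-1,\ 0\le i\le n_{j+1}-1\}$ of $\mathbb{F}[A,B]$ from Barria--Halmos or Laffey--Lazarus; this makes the map $\varphi\colon D(A)\times\mathbb{F}^{n-1}\to D_2(A)$ a bijection and the fibre-dimension count is read directly off $\varphi$. You instead use the redundant spanning set of all monomials $A^iB^j$, $(i,j)\ne(0,0)$, $i,j\le n-1$, which is enough for surjectivity and hence for irreducibility of the image, and then recover the dimension from the first projection $\pi\colon D_2(A)\to D(A)$ after observing that $\pi^{-1}(B)$ is exactly the Jacobson radical of the $n$-dimensional local algebra $\mathbb{F}[A,B]$ (a codimension-one linear subspace, as $\mathbb{F}[A,B]/\mathfrak m\cong\mathbb F$). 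The two routes trade an external structural citation for a short, self-contained local-algebra observation; yours is arguably cleaner conceptually, while the paper's is more economical in that a single bijective map delivers both irreducibility and dimension at once. One minor point: you outsource the identity $\dim N(A)=\dim C(A)-\dim\ker A$ to ``the block description of $C(A)$''; the paper actually carries out this block computation in full (via Lemma 2.3 of Basili), and it is worth at least noting that the reduction is: writing $A$ in block-Jordan form, $B\in C(A)$ is nilpotent iff each of the $l$ diagonal Toeplitz blocks $B_{ii}^{(1)}\in M_{s_i}(\mathbb F)$ is nilpotent, which cuts down $\dim C(A)$ by $\sum_i s_i=\dim\ker A$.
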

\begin{proof}
Similarly as in the previous corollary we can assume that the matrix $A$ is in the Jordan canonical form. Let $A=J_1\oplus \cdots \oplus J_k$ where for each $i=1,\ldots ,k$ the matrix $J_i$ is the nilpotent Jordan block of size $n_i\times n_i$, and $n_1\ge n_2\ge \cdots \ge n_k\ge 1$. Let $B$ be any nilpotent $n\times n$ matrix that commutes with $A$ such that the algebra generated by $A$ and $B$ is $n$-dimensional. Then the algebra $\mathbb{F}[A,B]$ has the basis
$$\mathcal{B}=\{A^iB^j;0\le j\le k-1,0\le i\le n_{j+1}-1\}$$
(see Theorem 2 of \cite{BH} or Theorem 1.1 of \cite{LL}). We define the polynomial map
$$\varphi \colon D(A)\times \mathbb{F}^{n_1-1}\times \mathbb{F}^{n_2}\times \cdots \times \mathbb{F}^{n_k}\to D_2(A)$$ by
$$\varphi (B,(c_{21},\ldots ,c_{n_11}),(c_{12},\ldots ,c_{n_22})\ldots ,(c_{1k},\ldots ,c_{n_kk}))=$$
$$=(B,\sum _{i=2}^{n_1}c_{i1}A^{i-1}+\sum _{j=2}^k\sum _{i=1}^{n_j}c_{ij}A^{i-1}B^{j-1}).$$
If $(B,C)\in D_2(A)$, then $\dim \mathbb{F}[A,B]=n$, and Theorem 1.1 of \cite{NSa} implies that the algebra $\mathbb{F}[A,B]$ is self-centralizing. In particular, $C$ is a polynomial in $A$ and $B$, and since the matrix $C$ is nilpotent and $\mathcal{B}$ is the basis of $\mathbb{F}[A,B]$, it follows that the map $\varphi$ is surjective. Therefore the closure $\overline{D_2(A)}$ is a closure of a polynomial image of the irreducible variety $N(A)\times \mathbb{F}^{n-1}$, hence it is irreducible. Moreover, since $\mathcal{B}$ is the basis of $\mathbb{F}[A,B]$, the map $\varphi$ is also injective, and $\dim \overline{D_2(A)}=\dim N(A)+n-1$ by the theorem on fibres.

To complete the proof we have to find the dimension of the variety $N(A)$. We can conjugate the matrix $A$, so we can assume that $A=\left[
\begin{array}{cccc}\widetilde{J_1}\\&\widetilde{J_2}\\&&\ddots\\ &&& \widetilde{J_l}
\end{array}
\right]$ for some positive integer $l$ where for each $i=1,\ldots ,l$ the matrix $\widetilde{J_i}=\left[
\begin{array}{cccc}0&I\\&0&\ddots\\&&\ddots&I\\&&&0
\end{array}
\right]$ has $m_i$ block rows and columns of dimension $s_i$, and $m_1>m_2>\cdots >m_l\ge 1$ (where $\widetilde{J_l}=0$ if $m_l=1$). It is clear that a matrix $B$ commutes with $A$ if and only if it is of the form $B=\left[
\begin{array}{cccc}B_{11}&B_{12}&\cdots&B_{1l}\\B_{21}&B_{22}& \cdots&B_{2l}\\\vdots&\vdots&\ddots&\vdots\\B_{l1}& B_{l2}&\cdots&B_{ll}
\end{array}
\right]$ where the matrices $B_{ij}$ are block upper triangular and Toeplitz, i.e.
\begin{itemize}
\item
$B_{ij}=\left[
\begin{array}{cccc}B_{ij}^{(1)}&B_{ij}^{(2)}&\cdots&B_{ij}^{(m_j)} \\0&B_{ij}^{(1)}&\ddots&\vdots\\&\ddots&\ddots& B_{ij}^{(2)}\\\vdots&&\ddots&B_{ij}^{(1)}\\&&&0\\&&& \vdots\\ 0&\cdots&&0
\end{array}
\right]$ for some $B_{ij}^{(1)},\ldots ,B_{ij}^{(m_j)}\in M_{s_i\times s_j}(\mathbb{F})$ if $i<j$,
\item
$B_{ij}=\left[
\begin{array}{ccccccc}0&\cdots&0&B_{ij}^{(1)}&B_{ij}^{(2)}&\cdots& B_{ij}^{(m_i)}\\0&&\cdots&0&B_{ij}^{(1)}&\ddots& \vdots \\\vdots&&&&\ddots&\ddots&B_{ij}^{(2)}\\0&&\cdots&&&0 &B_{ij}^{(1)}
\end{array}
\right]$ for some matrices $B_{ij}^{(1)},\ldots ,B_{ij}^{(m_i)}\in M_{s_i\times s_j}(\mathbb{F})$ if $i>j$, and
\item
$B_{ii}=\left[
\begin{array}{cccc}B_{ii}^{(1)}&B_{ii}^{(2)}&\cdots& B_{ii}^{(m_i)}\\&B_{ii}^{(1)}&\ddots&\vdots\\&&\ddots &B_{ii}^{(2)}\\&&& B_{ii}^{(1)}
\end{array}
\right]$ for some $B_{ii}^{(1)},\ldots ,B_{ii}^{(m_i)}\in M_{s_i}(\mathbb{F})$.
\end{itemize}
By Lemma 2.3 of \cite{Bas} the matrix $B$ belongs to $N(A)$ (i.e. it is nilpotent) if and only if the matrices $B_{11}^{(1)},\ldots ,B_{ll}^{(1)}$ are all nilpotent. Since $\dim N_{s_i}=\dim M_{s_i}(\mathbb{F})-s_i$ for each $i=1,\ldots ,l$, we obtain $\dim N(A)=\dim C(A)-\sum _{i=1}^ls_i=\dim C(A)-\dim \ker A$, which completes the proof of the proposition.
\end{proof}

\begin{cor}
Let $A$ be a nilpotent $n\times n$ matrix. Then the variety $N_2(A)$ is irreducible if and only if $N_2(A)=\overline{D_2(A)}$.
\end{cor}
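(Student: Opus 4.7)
The plan is to deduce the corollary directly from Proposition \ref{D_2irr} together with the openness of $D_2(A)$ in $N_2(A)$ established in the earlier lemma. Both implications are essentially formal.

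For the ``if'' direction I would simply invoke Proposition \ref{D_2irr}, which asserts that $\overline{D_2(A)}$ is irreducible. If $N_2(A)=\overline{D_2(A)}$, then $N_2(A)$ inherits irreducibility.

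For the ``only if'' direction, assume $N_2(A)$ is irreducible. The earlier lemma shows that $D_2(A)$ is Zariski open in $N_2(A)$, and the preceding corollary shows that $D(A)$ is nonempty; picking any $B\in D(A)$ gives $(B,0)\in D_2(A)$, so $D_2(A)$ is a nonempty open subset. In an irreducible variety every nonempty open subset is dense, so $\overline{D_2(A)}=N_2(A)$.

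Since both directions only combine results already proven in the paper, I do not expect any genuine obstacle; the only point worth making explicit is the nonemptiness of $D_2(A)$, which is the reason the preceding corollary about $\overline{D(A)}=N(A)$ was needed.
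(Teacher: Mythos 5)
Your proof is correct and is essentially the argument the paper leaves implicit (the paper states this corollary without proof, immediately after Proposition \ref{D_2irr}). The ``if'' direction follows from the irreducibility of $\overline{D_2(A)}$, and the ``only if'' direction combines openness of $D_2(A)$ with its nonemptiness --- you correctly identify the latter as the only point needing attention, and that $(B,0)\in D_2(A)$ for any $B\in D(A)$ settles it. This mirrors the proof of Corollary \ref{dense} earlier in the paper, which the authors evidently intend as the template.
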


The next lemma tells us which reduction we can make when proving that a pair from $N_2(A)$ belongs to $\overline{D_2(A)}$.

\begin{lemma}\label{reductionN_2}
Let $(B,C)\in N_2(A)$ and let $p\in \mathbb{F}[t,u]$ be a polynomial in two variables with zero constant term. Then the pair $(B,C)$ belongs to $\overline{D_2(A)}$ if and only if any of the following holds.
\begin{enumerate}
\item[(a)]
$(C,B)\in \overline{D_2(A)}$.
\item[(b)]
$(B,p(A,B)+C)\in \overline{D_2(A)}$.
\end{enumerate}
\end{lemma}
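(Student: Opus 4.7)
The plan is to imitate the proof of Lemma \ref{reduction}: for each item I would exhibit a polynomial bijection $\varphi$ of $N_2(A)$ with polynomial inverse, and then use continuity of polynomial maps to deduce $\varphi(\overline{D_2(A)})=\overline{D_2(A)}$, from which the equivalence is immediate.

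For (b), set $\varphi(B,C)=(B,\,C+p(A,B))$. Because $p$ has zero constant term and $A,B$ are commuting nilpotents, $p(A,B)$ is a sum of commuting nilpotent monomials, hence itself nilpotent and commuting with $A$, $B$, and $C$; thus $\varphi$ maps $N_2(A)$ into itself, with polynomial inverse $(B,C)\mapsto(B,\,C-p(A,B))$. Since the first coordinate is preserved, the defining condition $\dim\mathbb{F}[A,B]=n$ is preserved, so $\varphi(D_2(A))=D_2(A)$. Continuity then yields $\varphi(\overline{D_2(A)})\subseteq\overline{D_2(A)}$, and the symmetric inclusion for $\varphi^{-1}$ gives equality.

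For (a) take the swap $\sigma(B,C)=(C,B)$, a polynomial involution of $N_2(A)$. The main obstacle is that $\sigma$ does \emph{not} preserve $D_2(A)$: it maps it bijectively onto
$$D_2^{op}(A):=\{(B,C)\in N_2(A):\dim\mathbb{F}[A,C]=n\}.$$
It therefore suffices to prove $\overline{D_2(A)}=\overline{D_2^{op}(A)}$. By interchanging the roles of $B$ and $C$, the parametrization argument of Proposition \ref{D_2irr} applies verbatim to $D_2^{op}(A)$, showing $\overline{D_2^{op}(A)}$ is irreducible of the same dimension, and the openness lemma above (applied symmetrically) shows $D_2^{op}(A)$ is open in $N_2(A)$. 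Moreover, by the corollary $\overline{D(A)}=N(A)$ the set $D(A)$ is nonempty, and for any $B\in D(A)$ the diagonal pair $(B,B)$ lies in $D_2(A)\cap D_2^{op}(A)$. Hence $D_2(A)\cap D_2^{op}(A)$ is a nonempty open subset of the irreducible variety $\overline{D_2(A)}$, so it is dense in $\overline{D_2(A)}$, and by the symmetric argument also dense in $\overline{D_2^{op}(A)}$; the two closures therefore coincide. Continuity of $\sigma$ now gives
$$\sigma(\overline{D_2(A)})\subseteq\overline{\sigma(D_2(A))}=\overline{D_2^{op}(A)}=\overline{D_2(A)},$$
and since $\sigma=\sigma^{-1}$, the reverse inclusion is automatic, proving (a).
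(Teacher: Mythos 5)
Your proof of (b) is essentially the paper's argument: the map $(B,C)\mapsto(B,C+p(A,B))$ is a polynomial bijection of $N_2(A)$ with polynomial inverse that fixes the first coordinate, so it preserves $D_2(A)$ and hence $\overline{D_2(A)}$.

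For (a), however, you take a genuinely different route. The paper's proof is a one-line perturbation argument: given $(B,C)\in D_2(A)$, it notes that $(C+\lambda B,\,B)\in D_2(A)$ for all but finitely many $\lambda$ (since the open condition $\dim\mathbb{F}[A,X]=n$ holds at $X=B$, hence generically on the pencil $B+\mu C$, i.e.\ at $C+\lambda B$ for generic $\lambda\ne 0$), so $(C,B)$ is a limit and lies in $\overline{D_2(A)}$. You instead introduce the mirrored set $D_2^{op}(A)$ and prove the stronger structural fact that $\overline{D_2(A)}=\overline{D_2^{op}(A)}$, by observing that $D_2(A)\cap D_2^{op}(A)$ is a nonempty Zariski-open subset of $N_2(A)$ (it contains the diagonal pairs $(B,B)$ with $B\in D(A)$), hence dense in each of the two irreducible closures, which forces them to coincide; the swap $\sigma$ then preserves this common closure. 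Both arguments are correct. The paper's is shorter and more local; yours is more global and yields the reusable identity $\overline{D_2(A)}=\overline{D_2^{op}(A)}$ as a by-product, but it requires re-running the irreducibility/openness arguments for $D_2^{op}(A)$ (which, as you note, are symmetric to those for $D_2(A)$, or can be deduced directly from $\overline{D_2^{op}(A)}=\sigma(\overline{D_2(A)})$ since $\sigma$ is a polynomial homeomorphism).
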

\begin{proof}
If $\varphi \colon N_2(A)\to N_2(A)$ is a polynomial map that maps $D_2(A)$ to $\overline{D_2(A)}$, then $\varphi (\overline{D_2(A)})\subseteq \overline{\varphi (D_2(A))}\subseteq \overline{D_2(A)}$, since polynomial maps are continuous in the Zariski topology. Moreover, if $\varphi$ is bijective and its inverse is also a polynomial map that maps $D_2(A)$ to its closure, then $\varphi (\overline{D_2(A)})=\overline{D_2(A)}$. Now the second equivalence immediately follows, while for the first one we have to prove that the polynomial map $\varphi \colon N_2(A)\to N_2(A)$ defined by $\varphi (B,C)=(C,B)$ maps $D_2(A)$ to $\overline{D_2(A)}$. However, this is clear, since $D_2(A)$ is open subset of $N_2(A)$, and if $(B,C)\in D_2(A)$, then $(C+\lambda B,B)\in D_2(A)$ for all except finitely many scalars $\lambda \in \mathbb{F}$, and therefore $(C,B)\in \overline{D_2(A)}$.
\end{proof}

We can now prove the main theorem of this section. Note that Corollary 10 of \cite{NS} is the analogous result for $C_2(A)$ instead of $N_2(A)$.

\begin{thm}\label{2-regN_2irr}
For any 2-regular nilpotent $n\times n$ matrix $A$ the variety $N_2(A)$ is irreducible.
\end{thm}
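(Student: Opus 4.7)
The plan is to show $N_2(A) \subseteq \overline{D_2(A)}$, which combined with the corollary preceding Lemma \ref{reductionN_2} yields the theorem (the reverse inclusion is automatic since $D_2(A) \subseteq N_2(A)$ and $N_2(A)$ is closed). If $A$ is 1-regular, the result is immediate: $C(A) = \mathbb{F}[A]$ is $n$-dimensional, so $\dim \mathbb{F}[A, B] = n$ automatically for every $B \in C(A)$, giving $D_2(A) = N_2(A)$. Otherwise, conjugating as allowed by Lemma \ref{reductionN_2}(a), I may assume $A = J_{n_1} \oplus J_{n_2}$ with $n_1 \ge n_2 \ge 1$, and use the explicit block-Toeplitz description of $C(A)$ from the proof of Proposition \ref{D_2irr}. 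Combined with the characterization $B \in D(A) \Leftrightarrow \dim(\ker A \cap \ker B) = 1$ (Theorem 2 of \cite{K}, as invoked in the corollary after Basili's lemma), this reduces the condition $B \in D(A)$ to a nonvanishing condition on certain leading coefficients of the off-diagonal blocks of $B$.

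Given an arbitrary $(B, C) \in N_2(A)$ with $B \notin D(A)$, the strategy is to construct a polynomial curve in $N_2(A)$ through $(B, C)$ whose generic point lies in $D_2(A)$. The simplest attempt is a single-coordinate perturbation $(B + tX, C)$ with $X \in N(A) \cap C(C)$ chosen so that the relevant leading block entry of $B + tX$ is nonzero for $t \ne 0$. By Lemma \ref{reductionN_2}(a) one may also swap $B$ and $C$, reducing to perturbing whichever of the two admits such an $X$. If no single-coordinate perturbation works on either side, I pass to a simultaneous perturbation $(B + tX, C + tY)$; matching the $t$- and $t^2$-coefficients in the commutativity relation $(B+tX)(C+tY) = (C+tY)(B+tX)$ yields the linear system
$$[X, C] + [B, Y] = 0, \qquad [X, Y] = 0$$
for nilpotent $X, Y \in C(A)$, with the further requirement that the relevant leading block entry of $X$ (or $Y$) be nonzero.

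Using the explicit block-Toeplitz structure of $C(A)$ and Lemma \ref{reductionN_2}(b), which lets me first replace $C$ by $C - p(A, B)$ for any polynomial $p$ with zero constant term so as to bring $C$ into a convenient normal form, I expect to solve the displayed system in every residual case. The main technical obstacle is precisely this last step: one must verify, case by case on the vanishing pattern of $B$ and $C$ that obstructs single-sided perturbation, that the system admits a solution with the required leading-entry property. The 2-regularity hypothesis is essential here, since it restricts $\dim C(A)$ and forces enough compatibility between $B$ and $C$ to make such a solution exist; I expect the subcase of equal block sizes $n_1 = n_2$ to require slightly separate attention because of the larger symmetry of $C(A)$ in that situation.
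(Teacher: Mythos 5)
Your high-level strategy coincides with the paper's: reduce to showing $N_2(A)=\overline{D_2(A)}$, put $A=J_k\oplus J_m$, use the polynomial/block-Toeplitz description of $C(A)$, and produce through each $(B,C)\in N_2(A)$ a line meeting the open dense set $D_2(A)$, invoking Lemma \ref{reductionN_2} to normalize. That is the right framework. But what you have written is a plan, not a proof: the entire burden of the argument is the case analysis on the vanishing pattern of $B$ and $C$, and you explicitly defer it (``I expect to solve the displayed system in every residual case''). That deferred step is precisely where the content lies, and without it the argument has a genuine gap.

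Two more concrete issues. First, you do not treat the preliminary case where $C\in\mathbb{F}[A,B]$ (or symmetrically $B\in\mathbb{F}[A,C]$); the paper handles it separately by reducing to $(B,0)$ via Lemma \ref{reductionN_2}(b) and then running the line through some $X\in D(A)$, and this case needs to be disposed of before the commutativity relation can be used to constrain the off-diagonal blocks. Second, your proposed fallback — the simultaneous perturbation $(B+tX,C+tY)$ governed by $[X,C]+[B,Y]=0$, $[X,Y]=0$ — is in fact never needed. The paper shows that after normalizing $B$ and $C$ to have zero ``diagonal'' polynomial part (Lemma \ref{reductionN_2}(b) with $p$ a polynomial in $A$ alone), after possibly swapping $B$ and $C$ (Lemma \ref{reductionN_2}(a)), and, in the subcase where both $\tilde r$ and $\tilde r'$ have nonzero constant term, after subtracting $A^{\gamma-\beta}\tilde r'(A)\tilde r(A)^{-1}B$ from $C$, a one-sided perturbation by a nilpotent $X\in C(A)$ commuting with the other matrix always produces, for generic $\lambda$, a pair satisfying the $\dim(\ker A\cap\ker B)=1$ (or transposed) criterion of Ko\v{s}ir's Theorem 2. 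The equal-block-size case $k=m$ then reuses the same pattern in the $M_2(\mathbb{F})[t]/t^k$ picture, again with one-sided perturbations only. So the linear system you set up is superfluous once the reductions are pushed far enough, and positing it as a backup without solving it does not close the argument. You should carry out the case analysis explicitly, exploiting Lemma \ref{reductionN_2}(b) more aggressively to avoid the two-sided system altogether.
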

\begin{proof}
Let $(B,C)\in N_2(A)$ be an arbitrary pair. We have to prove that $(B,C)\in \overline{D_2(A)}$. If $C$ is a polynomial in $A$ and $B$ or if $B$ is a polynomial in $A$ and $C$, then by Lemma \ref{reductionN_2}(a) we can assume the first case, and moreover, by Lemma \ref{reductionN_2}(b) we can assume that $C=0$. Since the set $D(A)$ is nonempty, there exists a nilpotent matrix $X\in C(A)$ such that $\dim \mathbb{F}[A,X]=n$, i.e. $(X,0)\in D_2(A)$. The line $\{(\lambda X+(1-\lambda )B,0);\lambda \in \mathbb{F}\}$ then intersects the open subset $D_2(A)$ of $N_2(A)$, therefore $(\lambda X+(1-\lambda )B,0)\in D_2(A)$ for all except finitely many scalars $\lambda \in \mathbb{F}$. However, then the whole line belongs to $\overline{D_2(A)}$, and in particular $(B,0)\in \overline{D_2(A)}$.

In the sequel we assume that $C$ is not a polynomial in $A$ and $B$ and also that $B$ is not a polynomial in $A$ and $C$. For any invertible matrix $P\in GL_n(\mathbb{F})$ the varieties $N_2(A)$ and $N_2(P^{-1}AP)$ are clearly isomorphic, therefore we can assume that the matrix $A$ is in the Jordan canonical form, i.e. $A=\left[
\begin{array}{cc}J_k&0\\0&J_m
\end{array}
\right]$ where $J_k$ and $J_m$ are Jordan blocks of orders $k$ and $m$, respectively, and $k\ge m$. We will consider two cases.

{\bf Case 1.} Assume that $k>m$. By Lemma 4 of \cite{NS} the elements of the centralizer $C(A)$ can be identified with the matrices of the form $\left[
\begin{array}{cc}p(t)&t^{k-m}q(t)\\r(t)&s(t)
\end{array}
\right]$ where the first row belongs to $\mathbb{F}[t]/t^k$ and the second row belongs to $\mathbb{F}[t]/t^m$, i.e. $p,q\in \mathbb{F}[t]/t^k$, $r,s\in \mathbb{F}[t]/t^m$ and $q$ is of degree less than $m$. Moreover, as observed in the remark after Lemma 4 of \cite{NS} the multiplication in $C(A)$ corresponds to the multiplication of such polynomial matrices. In Case 1 we will use this polynomial notation for all elements of $C(A)$. Note that in this notation $A$ corresponds to the matrix $\left[
\begin{array}{cc}t&0\\0&t
\end{array}
\right]$. The matrices $B$ and $C$ commute with $A$, they are nilpotent and by Lemma \ref{reductionN_2} we can add any polynomials (with zero constant terms) in $A$ to them, therefore we can assume that $B=\left[
\begin{array}{cc}0&t^{k-m}p(t)\\q(t)&r(t)
\end{array}
\right]$ and $C=\left[
\begin{array}{cc}0&t^{k-m}p'(t)\\q'(t)&r'(t)
\end{array}
\right]$ for some polynomials $p,p',q,q',r,r'\in \mathbb{F}[t]/t^m$. Since $B$ and $C$ are nilpotent, the polynomials $r$ and $r'$ have zero constant terms.

{\bf (1)} Assume first that at least one of the polynomials $p,p',q,q'$ has nonzero constant term. By Lemma \ref{reductionN_2}(a) we can assume that $p$ or $q$ has nonzero constant term. Then it is easy to see that in the first case $\dim (\ker A\cap \ker B)=1$ and in the second case $\dim (\ker A^T\cap \ker B^T)=1$. Theorem 2 of \cite{K} then in both cases implies that $\dim \mathbb{F}[A,B]=n$, therefore $(B,C)\in D_2(A)$.

{\bf (2)} Assume now that the constant terms of the polynomials $p,p',q,q'$ are all zero. Let $\beta$ be the largest positive integer such that $t^{\beta}$ divides $p(t)$, $q(t)$ and $r(t)$, and let $\gamma$ be the largest positive integer such that $t^{\gamma}$ divides $p'(t)$, $q'(t)$ and $r'(t)$. Then $B=A^{\beta}\left[
\begin{array}{cc}0&t^{k-m}\tilde{p}(t)\\\tilde{q}(t)&\tilde{r}(t)
\end{array}
\right]$ and $C=A^{\gamma}\left[
\begin{array}{cc}0&t^{k-m}\tilde{p'}(t)\\\tilde{q'}(t)&\tilde{r'}(t)
\end{array}
\right]$, where at least one of the polynomials $\tilde{p}(t),\tilde{q}(t),\tilde{r}(t)$ is not divisible by $t$ and at least one of $\tilde{p'}(t),\tilde{q'}(t),\tilde{r'}(t)$ is not divisible by $t$. If $t$ divides $\tilde{r}(t)$ or $\tilde{r'}(t)$, then by Lemma \ref{reductionN_2}(a) we can assume that $t$ divides $\tilde{r}(t)$. The matrix $X=\left[
\begin{array}{cc}0&t^{k-m}\tilde{p}(t)\\\tilde{q}(t)&\tilde{r}(t)
\end{array}
\right]$ is then nilpotent and it commutes with $B$. Since at least one of the polynomials $\tilde{p}(t),\tilde{q}(t)$ is not divisible by $t$, for each $\lambda \ne 0$ the pair $(B,C+\lambda X)$ belongs to $\overline{D_2(A)}$ by (1). Therefore $(B,C)\in \overline{D_2(A)}$.

However, if $\tilde{r}(t)$ and $\tilde{r'}(t)$ are not divisible by $t$, then by Lemma \ref{reductionN_2}(a) we can assume that $\beta \le \gamma$. Moreover, the matrix $\tilde{r}(A)$ is invertible and its inverse is a polynomial in $A$, therefore by Lemma \ref{reductionN_2}(b) the pair $(B,C)$ belongs to $\overline{D_2(A)}$ if and only if the pair $(B,C-A^{\gamma -\beta}\tilde{r'}(A)\tilde{r}(A)^{-1}B)$ belongs to $\overline{D_2(A)}$, and we can assume that $\tilde{r'}(t)=0$. Since the matrix $C$ is not a polynomial in $A$ and $B$, one of the polynomials $\tilde{q'}$ and $\tilde{p'}$ is nonzero, therefore the pair $(B,C)$ belongs to $\overline{D_2(A)}$ by the previous paragraph.

{\bf Case 2.} Assume that $k=m$. Since for any invertible matrix $P\in GL_n(\mathbb{F})$ the varieties $N_2(A)$ and $N_2(P^{-1}AP)$ are isomorphic, we can assume that the matrix $A$ is of the block form $A=\left[
\begin{array}{cccc}0&I&\cdots&0\\&\ddots&\ddots&\vdots\\&&\ddots&I \\&&&0
\end{array}
\right]$ with $k$ rows and $k$ columns, and each block is a $2\times 2$ matrix. Then $B=\left[
\begin{array}{cccc}B_1&B_2&\cdots&B_k\\&\ddots&\ddots&\vdots\\&& \ddots&B_2\\&&&B_1
\end{array}
\right]$ and $C=\left[
\begin{array}{cccc}C_1&C_2&\cdots&C_k\\&\ddots&\ddots&\vdots\\&& \ddots&C_2\\&&&C_1
\end{array}
\right]$ for some $2\times 2$ matrices $B_i$ and $C_i$. Clearly $B_1$ and $C_1$ have to be nilpotent matrices. By the remark after Lemma 4 of \cite{NS} these matrices $B$ and $C$ can be identified with the polynomials $B(t)=B_1+B_2t+\cdots +B_kt^{k-1}$ and $C(t)=C_1+C_2t+\cdots +C_kt^{k-1}$ in $M_2(\mathbb{F})[t]/t^k$. We will use this notation in Case 2.

{\bf (1)} If $B_1$ or $C_1$ is nonzero matrix, then by Lemma \ref{reductionN_2}(a) we can assume that $B_1\ne 0$. However, then $\dim (\ker A\cap \ker B)=1$, which implies $\dim \mathbb{F}[A,B]=n$ by Theorem 2 of \cite{K}, therefore $(B,C)\in D_2(A)$.

{\bf (2)} Assume now that $B_1=C_1=0$. If $B_2$ is not a scalar matrix, then the commutativity relation of $B$ and $C$ implies that $B_2C_2=C_2B_2$ which is equivalent to $C_2=\alpha I+\beta B_2$ for some $\alpha ,\beta \in \mathbb{F}$. Moreover, by Lemma \ref{reductionN_2}(b) we can assume that $C_2=0$. Then the commutativity relation implies that $B_2C_3=C_3B_2$, which again means that $C_3$ is linearly dependent of $I$ and $B_2$, and using Lemma \ref{reductionN_2}(b) we can assume that $C_3=0$. Repeating this we can assume that $C_i=0$ for each $i\le k-1$. Since the variety of all $2\times 2$ matrices that are not generic is 3-dimensional, its intersection with any 2-dimensional vector subspace of $M_2(\mathbb{F})$ is at least 1-dimensional by Theorem 6 in Chapter I, \S 6.2 of \cite{Sh}. In particular, there exist $\mu ,\nu \in \mathbb{F}$, at least one of them nonzero, such that the matrix $\mu B_2-\nu C_k$ is not generic. If $\nu =0$, then $B_2$ is not generic, and since we can add any multiple of $A$ to $B$, we can assume that $B_2$ is nilpotent. The matrix $\widetilde{B}(t)=B_2+B_3t+\cdots +B_kt^{k-2}$ is then nilpotent and it commutes with $B$. Since $(B(t),C(t)+\lambda \widetilde{B}(t))\in \overline{D_2(A)}$ for all $\lambda \ne 0$ by (1), we obtain $(B,C)\in \overline{D_2(A)}$. However, if $\nu \ne 0$, then by Lemma \ref{reductionN_2}(b) we can subtract $\frac{\mu}{\nu}A^{k-2}B$ from $C$, therefore we can assume that $C_k$ is not generic, and by the same lemma we can assume that it is nilpotent. Since $C$ is not a polynomial in $A$ and $B$, its submatrix $C_k$ is nonzero. The matrix $\widetilde{C}(t)=C_k\in C(A)$ is then nilpotent, it commutes with $C$ and by (1) the pair $(B(t)+\lambda \widetilde{C}(t),C(t))$ belongs to $D_2(A)$ for each $\lambda \ne 0$. Therefore $(B,C)\in \overline{D_2(A)}$.

It remains to consider the case when $B_2$ is a scalar matrix. Since $C$ is not a polynomial in $A$, there exists the smallest positive integer $l$ such that $C_l$ is not a scalar matrix. The matrix $\widetilde{C}(t)=C_lt+\cdots +C_kt^{k-l+1}$ then commutes with $C$, and by the previous paragraph $(B(t)+\lambda \widetilde{C}(t),C(t))\in \overline{D_2(A)}$ for each $\lambda \ne 0$. However, then $(B,C)\in \overline{D_2(A)}$, which completes the proof of the theorem.
\end{proof}

\begin{cor}\label{2-regN(3,n)irr}
If $(A,B,C)$ is any triple of commuting nilpotent $n\times n$ matrices with $A$ 2-regular, then $(A,B,C)\in \overline{R(3,n)}$.
\end{cor}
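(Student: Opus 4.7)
The plan is to combine Theorem~\ref{2-regN_2irr} with the classical irreducibility of $N(2,n)$ (due to Baranovsky and Basili) in order to lift a 1-regular perturbation of the commuting pair $(A,B)$ to a perturbation of the full triple. The key observation is that, once the pair $(B,C)$ has been moved into $D_2(A)$, the condition $\dim\mathbb{F}[A,B']=n$ forces $C'$ to be a polynomial in $A$ and $B'$, and any 1-regular approximation of the pair automatically induces one of the triple.

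First I would invoke Theorem~\ref{2-regN_2irr} to write $N_2(A)=\overline{D_2(A)}$. Since the map $\iota\colon N_2(A)\to N(3,n)$ defined by $(B',C')\mapsto(A,B',C')$ is polynomial, hence continuous in the Zariski topology, and $\overline{R(3,n)}$ is closed, it suffices to prove $\iota(D_2(A))\subseteq\overline{R(3,n)}$. So fix $(B',C')\in D_2(A)$. Then $\mathbb{F}[A,B']$ is self-centralizing of dimension $n$, and $C'\in C(A)\cap C(B')=\mathbb{F}[A,B']$, so $C'=p(A,B')$ for some $p\in\mathbb{F}[x,y]$. Since commuting nilpotents can be simultaneously strictly upper triangularized, nilpotency of $C'$ forces the constant term $p(0,0)$ to vanish.

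Next I would introduce the polynomial map $\psi\colon N(2,n)\to N(3,n)$ given by $\psi(A'',B'')=(A'',B'',p(A'',B''))$. This is well-defined because, for any commuting nilpotent pair $(A'',B'')$, the matrix $p(A'',B'')$ commutes with $A''$ and $B''$ (it is a polynomial in them) and is nilpotent (by the same upper-triangularization argument, using $p(0,0)=0$). By construction $\psi(A,B')=(A,B',C')$, and $\psi(R_1(2,n))\subseteq R_1(3,n)\subseteq R(3,n)$ since $\psi$ preserves the first matrix together with its 1-regularity.

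To finish I would invoke the Baranovsky--Basili theorem, which gives $N(2,n)=\overline{R_1(2,n)}$. Continuity of $\psi$ then yields $\psi(N(2,n))\subseteq\overline{\psi(R_1(2,n))}\subseteq\overline{R(3,n)}$, so in particular $(A,B',C')=\psi(A,B')\in\overline{R(3,n)}$, as required. The genuine difficulty was hidden inside Theorem~\ref{2-regN_2irr}; once the reduction to $D_2(A)$ is at our disposal, the polynomial-expression trick makes the remainder essentially automatic.
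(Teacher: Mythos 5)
Your proposal is correct and follows essentially the same route as the paper's proof: reduce to $(B',C')\in D_2(A)$ via Theorem~\ref{2-regN_2irr}, observe that $C'$ is a polynomial $p(A,B')$ by self-centralization, and then transport density of the 1-regular locus from pairs to triples by continuity of a polynomial map. The only structural difference is cosmetic: you build a single map $\psi\colon N(2,n)\to N(3,n)$, whereas the paper first places $(A,B',0)$ in $\overline{R(3,n)}$ and then applies a self-map $\varphi(A_1,A_2,A_3)=(A_1,A_2,A_3+p(A_1,A_2))$ of $N(3,n)$; both hinge on exactly the same facts. One small caveat: you cite Baranovsky--Basili for $N(2,n)=\overline{R_1(2,n)}$, but as the paper notes in the introduction those proofs carry characteristic restrictions; since the paper works over arbitrary characteristic, it invokes Premet's theorem (Theorem 3.7 of \cite{P}) instead, and you should do the same for the argument to hold in full generality.
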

\begin{proof}
Let $A$ be 2-regular and $(X,Y)\in D_2(A)$ arbitrary pair. Then the pair $(X,0)$ also belongs to $D_2(A)$ and $Y$ is a polynomial in $A$ and $X$ by Theorem 1.1 of \cite{NSa}. By Theorem 3.7 of \cite{P} the variety $N(2,n)$ is irreducible for all positive integers $n$, therefore $(A,X)\in \overline{R(2,n)}$ and $(A,X,0)\in \overline{R(2,n)\times \{0\}}$. However, since $R(2,n)\times \{0\}\subseteq R(3,n)$, it follows that $(A,X,0)\in \overline{R(3,n)}$. Now let $p\in \mathbb{F}[t,u]$ be a polynomial such that $Y=p(A,X)$. The polynomial map $\varphi \colon N(3,n)\to N(3,n)$ defined by $\varphi (A_1,A_2,A_3)=(A_1,A_2,A_3+p(A_1,A_2))$ maps $R_1(3,n)$ to itself, therefore $\varphi (\overline{R(3,n)})\subseteq \overline{R(3,n)}$, and in particular $(A,X,Y)=\varphi (A,X,0)\in \overline{R(3,n)}$. We proved that $\{A\}\times D_2(A)\subseteq \overline{R(3,n)}$, therefore $\{A\}\times \overline{D_2(A)}\subseteq \overline{R(3,n)}$, and in particular $(A,B,C)\in \overline{R(3,n)}$, since $N_2(A)$ is irreducible.
\end{proof}

Theorem \ref{2-regN_2irr} can be viewed as a nilpotent version of Corollary 10 of \cite{NS}. However, the same result for 3-regular matrices, that would be the nilpotent version of Theorem 12 of \cite{S2}, does not hold as is shown in the following proposition.

\begin{prop}\label{N_2red}
Let $A=\left[
\begin{array}{cccccc}0&1&0&0&0&0\\0&0&1&0&0&0\\0&0&0&0&0&0\\ 0&0&0&0&1&0\\0&0&0&0&0&0\\0&0&0&0&0&0
\end{array}
\right]$. Then $A$ is 3-regular nilpotent matrix and the variety $N_2(A)$ is reducible.
\end{prop}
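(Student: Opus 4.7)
The strategy is to show $\overline{D_2(A)} \subsetneq N_2(A)$. First, by Proposition \ref{D_2irr}, I compute $\dim \overline{D_2(A)}$: for the given $A$ of Jordan type $(3,2,1)$, the formula $\dim C(A) = \sum_i (2i-1)\lambda_i$ yields $\dim C(A) = 1\cdot 3 + 3\cdot 2 + 5\cdot 1 = 14$, and $\dim \ker A = 3$. With $n = 6$, this gives
\[
\dim \overline{D_2(A)} = 14 - 3 + 6 - 1 = 16.
\]

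Next, by Proposition \ref{dim_algebra-closed}, the locus $\mathcal{U} = \{(B,C) \in N_2(A) : \dim \mathbb{F}[A, B, C] \leq n\}$ is a closed subvariety of $N_2(A)$, and it contains $D_2(A)$: for $(B, C) \in D_2(A)$ the algebra $\mathbb{F}[A, B]$ is $n$-dimensional and hence self-centralizing by Theorem 1.1 of \cite{NSa}, so $C \in \mathbb{F}[A, B]$ and $\mathbb{F}[A, B, C] = \mathbb{F}[A, B]$ has dimension $n$. Consequently $\overline{D_2(A)} \subseteq \mathcal{U}$, and to prove reducibility of $N_2(A)$ it suffices to exhibit a commuting triple $(A, B, C)$ of nilpotent matrices with $\dim \mathbb{F}[A, B, C] \geq 7$, for then the corresponding pair lies in $N_2(A) \setminus \overline{D_2(A)}$.

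To construct such a triple, I would take $B \in N(A)$ nongeneric, i.e.\ with $\dim \mathbb{F}[A, B] < 6$, so that $C(A) \cap C(B)$ properly contains $\mathbb{F}[A, B]$ and admits a nilpotent $C$ outside it; then I would arrange the off-diagonal parts of $B$ and $C$ so that some product such as $AC$, $BC$, or $C^2$ produces a seventh independent algebra element beyond $I, A, A^2, B, C$. The block decomposition of $C(A)$ for Jordan type $(3,2,1)$---in particular the several small $A$-equivariant Hom-spaces linking the three distinct Jordan blocks of $A$---provides the degrees of freedom for such a construction, and the setup parallels the Section 3 argument which produces a $(n+1)$-dimensional commutative unital algebra generated by $4$ nilpotent matrices. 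The main obstacle is the explicit construction: for many natural choices of $B$ and $C$ in $C(A)$, the products $AB, AC, BC, B^2, C^2$ either vanish or already lie in $\mathbb{F}[A, B]$, so the algebra collapses to dimension at most $6$; isolating a configuration that genuinely forces a seventh independent element requires careful bookkeeping of the $14$-dimensional centralizer $C(A)$ and its off-diagonal block pieces.
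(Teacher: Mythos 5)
Your computation of $\dim \overline{D_2(A)}=\dim C(A)-\dim\ker A+n-1=14-3+6-1=16$ is correct, and the general logic — showing $\overline{D_2(A)}\subsetneq N_2(A)$ — is the right goal. But the particular sufficient condition you propose cannot be realized. You want a pair $(B,C)\in N_2(A)$ with $\dim\mathbb{F}[A,B,C]\ge 7$, i.e.\ a commutative unital subalgebra of $M_6(\mathbb{F})$ of dimension $\ge 7$ generated by \emph{three} commuting nilpotent matrices. By the corollary following Proposition~\ref{dim_algebra-closed}, if $N(3,6)$ is irreducible then every commutative unital subalgebra of $M_6(\mathbb{F})$ generated by three commuting nilpotent matrices has dimension at most $6$ — and $N(3,6)$ is indeed irreducible by Theorem~\ref{35,36}. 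So no such $(B,C)$ exists, and the ``main obstacle'' you note is in fact an impossibility; the collapse to dimension $\le 6$ that you observe for your test cases is forced. (The Section~3 analogy does not transfer: that construction uses \emph{four} nilpotent generators, which is exactly why $N(4,6)$ is reducible while $N(3,6)$ is not.)

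The paper instead proves the proposition with a direct dimension count that does not go through the generated algebra at all. It exhibits an explicit family $\mathcal{V}\subsetneq N_2(A)$ of pairs with certain entries of $B$ and $C$ set to zero, living in an affine space $\mathbb{F}^{18}$ of free parameters, and observes that inside this $\mathbb{F}^{18}$ the commutativity conditions reduce to just two equations. Hence $\dim\mathcal{V}\ge 18-2=16=\dim\overline{D_2(A)}$. Since $\mathcal{V}$ is a \emph{proper} closed subvariety of $N_2(A)$, if $N_2(A)$ were irreducible it would equal $\overline{D_2(A)}$ and every proper closed subvariety would have dimension strictly less than $16$ — contradiction. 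To salvage your argument you would need to replace the algebra-dimension criterion by some other computable obstruction to membership in $\overline{D_2(A)}$, but the dimension comparison with a concrete low-codimension family is both simpler and, unlike your route, actually available.
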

\begin{proof}
Let $\mathcal{V}$ be the variety of all pairs of $6\times 6$ matrices
$$\left(\left[
\begin{array}{cccccc}0&a&b&c&d&e\\0&0&a&0&c&0\\0&0&0&0&0&0\\ 0&0&f&0&g&0\\0&0&0&0&0&0\\0&0&h&0&i&0
\end{array}
\right] ,\left[
\begin{array}{cccccc}0&a'&b'&c'&d'&e'\\0&0&a'&0&c'&0\\0&0&0&0&0&0 \\0&0&f'&0&g'&0\\0&0&0&0&0&0\\0&0&h'&0&i'&0
\end{array}
\right]\right)$$
satisfying $cf'+eh'=c'f+e'h$ and $ac'+cg'+ei'=a'c+c'g+e'i$. Clearly $\mathcal{V}$ is a proper subvariety of $N_2(A)$, and its dimension is at least 16, since it is a subset of $\mathbb{F}^{18}$ defined by 2 equations. However, by Proposition \ref{D_2irr} the closure $\overline{D_2(A)}$ has dimension equal to $\dim C(A)-\dim \ker A+5=16$, therefore $\overline{D_2(A)}$ cannot be equal to $N_2(A)$, and $N_2(A)$ is reducible.
\end{proof}

\begin{rmk}
As we will see in the next section, although the variety $N_2(A)$ in the previous proposition is reducible, the variety $N(3,6)$ is still irreducible. However, to prove this one needs to have all three matrices of the triples from $\{A\}\times N_2(A)$ perturbed by 1-regular nilpotent commuting matrices, not only two of them.
\end{rmk}

\section{Irreducibility of varieties $N(3,n)$ for $n\le 6$}

In the previous section we proved that each triple of commuting nilpotent $n\times n$ matrices that contains a 2-regular matrix belongs to $\overline{R(3,n)}$. In this section we consider all other cases that occur in dimensions 5 and 6. Eventually, we prove that the varieties $N(3,5)$ and $N(3,6)$ are irreducible. We start with two results that hold in any dimension. We prove that a triple of commuting nilpotent $n\times n$ matrices belongs to $\overline{R(3,n)}$ if the Jordan canonical forms of the matrices in the triple have only one nonzero Jordan block or if they have Jordan blocks of orders at most 2. These results will be proved under some additional assumptions on the perturbability of triples of matrices of higher rank, which, however, have no influence on irreducibility of $N(3,n)$, since in the case of irreducibility of $N(3,n)$ all triples have to belong to $\overline{R(3,n)}$. Note that these results are nilpotent versions of Theorems 5.3 and 5.1 of \cite{HO}.

\begin{prop}\label{1nonzero}
Let $k<n$ and assume that each triple of commuting nilpotent $n\times n$ matrices whose linear span contains a matrix of rank at least $k$ belongs to $\overline{R(3,n)}$. Let $(A,B,C)\in N(3,n)$ be a triple such that the Jordan canonical form of $A$ has one Jordan block of order $k$ and $n-k$ zero Jordan blocks. Then $(A,B,C)\in \overline{R(3,n)}$.
\end{prop}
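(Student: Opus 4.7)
The plan is to exhibit an irreducible subvariety $\mathcal{V} \subseteq N(3,n)$ containing $(A,B,C)$ together with a Zariski-dense open subset $\mathcal{U} \subseteq \mathcal{V}$ each of whose triples meets the rank hypothesis of the proposition, and then conclude via Lemma~\ref{irr-open}. By Lemma~\ref{reduction}(a) I assume $A = J_k \oplus 0_{n-k}$ is in Jordan canonical form. The small cases $k = n$ ($A$ itself $1$-regular), $k = n - 1$ ($A$ is $2$-regular, apply Corollary~\ref{2-regN(3,n)irr}), and $k = 1$ ($A = 0$: handled either directly by the hypothesis when $B \neq 0$ or $C \neq 0$, or by perturbing $A$ to a $1$-regular nilpotent when $B = C = 0$) are immediate; so I assume $2 \leq k \leq n-2$. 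By Lemma~\ref{reduction}(c) applied twice I subtract suitable polynomials in $A$ (with zero constant term) from $B$ and $C$ to kill the upper-left $k \times k$ blocks of $B$ and $C$.

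The condition $B, C \in C(A)$ then forces
$$B = \begin{pmatrix} 0 & e_1 u_B^T \\ v_B e_k^T & B_{22} \end{pmatrix}, \qquad C = \begin{pmatrix} 0 & e_1 u_C^T \\ v_C e_k^T & C_{22} \end{pmatrix},$$
with $u_B, v_B, u_C, v_C \in \mathbb{F}^{n-k}$, and the nilpotency of $B$ and $C$ forces $B_{22}, C_{22}$ nilpotent, so $(B_{22}, C_{22}) \in N(2, n-k)$. Using $e_k^T e_1 = 0$ (valid since $k \geq 2$), the equation $BC = CB$ reduces to $[B_{22}, C_{22}] = 0$ together with the linear coupling conditions $u_B^T v_C = u_C^T v_B$, $u_B^T C_{22} = u_C^T B_{22}$, and $B_{22} v_C = C_{22} v_B$. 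Let $\mathcal{P}$ be the parameter variety of admissible tuples $(B_{22}', C_{22}', u_B', v_B', u_C', v_C')$ and let $\Phi \colon \mathcal{P} \to N(3,n)$ be the polynomial map producing the associated triple in the block form above. Over the Zariski-dense open subset of $N(2, n-k)$ where $B_{22}'$ is $1$-regular (so $C_{22}' \in \mathbb{F}[B_{22}']$; here irreducibility of $N(2, n-k)$ comes from Proposition~\ref{1-regular}), the coupling conditions form a linear system in $(u_B', v_B', u_C', v_C')$ with generically constant-rank coefficient matrix; the theorem on fibres then supplies an irreducible component $\mathcal{P}_0 \subseteq \mathcal{P}$ whose image closure $\mathcal{V} := \overline{\Phi(\mathcal{P}_0)}$ is an irreducible subvariety of $N(3,n)$ containing $(A, B, C)$.

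On the open subset $\mathcal{U} \subseteq \mathcal{V}$ where $B_{22}'$ is $1$-regular on $V_2$, an explicit block computation of $\ker(A + tB')^T$ (from the top block one gets $x_1 = \dots = x_{k-2} = 0$ and $x_{k-1} = -t v_{B'}^T y$, and then the bottom block forces $B_{22}'^{\,T} y = 0$) gives the lower bound
$$\mathrm{rank}(A + tB') \geq (k-1) + \mathrm{rank}(B_{22}') = (k-1) + (n-k-1) = n-2 \geq k$$
for all but finitely many $t \in \mathbb{F}$, using $n - k \geq 2$. Thus each triple $(A, B', C') \in \mathcal{U}$ has a matrix of rank at least $k$ in its linear span, so by the hypothesis of the proposition $\mathcal{U} \subseteq \overline{R(3, n)}$, and Lemma~\ref{irr-open} yields $\mathcal{V} \subseteq \overline{R(3, n)}$, whence $(A, B, C) \in \overline{R(3, n)}$. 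The main obstacle is rigorously establishing that $(A, B, C) \in \mathcal{V}$: the fiber dimension of the projection $\mathcal{P} \to N(2, n-k)$ can jump upward at degenerate pairs $(B_{22}, C_{22})$, so our coupling data $(u_B, v_B, u_C, v_C)$ must be continuously lifted along a path $(B_{22}^{(s)}, C_{22}^{(s)}) \to (B_{22}, C_{22})$ with $B_{22}^{(s)}$ $1$-regular; this is handled case-by-case, possibly using further reductions from Lemma~\ref{reduction}(b), (d), (e) to symmetrize or normalize the coupling data before passing to the limit.
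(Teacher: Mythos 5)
Your overall strategy matches the paper's in spirit: use the rank hypothesis to handle triples whose $(n-k)\times(n-k)$ blocks $D,D'$ are nonzero, then perturb the degenerate triples into that regime and appeal to Lemma~\ref{irr-open}. The block form of $B,C\in C(A)$, the use of Lemma~\ref{reduction}(c) to kill the $p(J_k)$ terms, and the observation that $BC=CB$ reduces to $[B_{22},C_{22}]=0$ plus the linear coupling conditions are all correct (as is the fact that $B$ nilpotent iff $B_{22}$ nilpotent, which follows since $F(tI-N)^{-1}E=0$ in the Schur complement). The rank estimate $\mathrm{rank}(A+tB')\ge n-2\ge k$ when $B_{22}'$ is $1$-regular is also fine.

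However, there is a genuine gap, and it is the one you flag yourself. Everything hinges on proving that the given degenerate triple $(A,B,C)$ actually lies in the closure $\mathcal{V}=\overline{\Phi(\mathcal{P}_0)}$ of the good locus where $B_{22}'$ is $1$-regular. You acknowledge that the fiber of the projection $\mathcal{P}\to N(2,n-k)$ over $(B_{22},C_{22})=(0,0)$ is much bigger than the generic fiber (the coupling equations $u_B^TC_{22}'=u_C^TB_{22}'$ and $B_{22}'v_C=C_{22}'v_B$ become genuine constraints as soon as $(B_{22}',C_{22}')\neq(0,0)$), so it is not automatic that your particular $(u_B,v_B,u_C,v_C)$ extends along a curve through $1$-regular pairs. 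Asserting that ``this is handled case-by-case, possibly using further reductions\ldots'' is an IOU for exactly the nontrivial content of the proposition. In fact, the paper first invokes the hypothesis to reduce to $D=D'=0$, observes that the remaining locus $\{(\mathbf{a},\mathbf{a'},\mathbf{b},\mathbf{b'}):\mathbf{a}^T\mathbf{b'}=\mathbf{a'}^T\mathbf{b}\}$ is irreducible (cut out by one irreducible quadric), so that open conditions on $\mathbf{a},\mathbf{a'},\mathbf{b},\mathbf{b'}$ may be assumed; it then normalizes $\mathbf{a}=\mathbf{e_1}$, $\mathbf{b}=\beta\mathbf{e_1}$, $\mathbf{a'}=\mathbf{e_2}$, $\mathbf{b'}=-\gamma\mathbf{e_2}$, and \emph{explicitly constructs} square-zero commuting matrices $X,X'$ (rank~$1$, supported in the bottom block) with $\mathbf{e_1}^TX'=\mathbf{e_2}^TX$ and $\beta X'\mathbf{e_1}=-\gamma X\mathbf{e_2}$, so that $(A,B+\lambda Y,C+\lambda Z)\in N(3,n)$ and some combination of $A$ and $B+\lambda Y$ has rank $k$ for $\lambda\neq 0$. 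This explicit perturbation is what closes the lifting gap; without an analogue of it your proof is incomplete. (Note also that the paper's perturbation does not even need $B_{22}'$ to become $1$-regular — rank one in the bottom block already suffices — so insisting on $1$-regularity is an unnecessary over-constraint that makes the lifting problem harder than it needs to be.)
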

\begin{proof}
If $k=n-1$, then the matrix $A$ is 2-regular and the proposition follows from Corollary \ref{2-regN(3,n)irr}. Therefore we assume that $k\le n-2$. By Lemma \ref{reduction}(a) we can assume that the matrix $A$ is in the Jordan canonical form. If $J$ denotes the $k\times k$ Jordan block, then $B=\left[
\begin{array}{cc}p(J)&\mathbf{e_1}\mathbf{a}^T\\\mathbf{b} \mathbf{e_k}^T&D
\end{array}
\right]$ and $C=\left[
\begin{array}{cc}p'(J)&\mathbf{e_1}\mathbf{a'}^T\\ \mathbf{b'}\mathbf{e_k}^T&D'
\end{array}
\right]$ for some polynomials with zero constant terms $p,p'\in \mathbb{F}[t]$, some vectors $\mathbf{a},\mathbf{a'},\mathbf{b},\mathbf{b'}\in \mathbb{F}^{n-k}$ and some matrices $D,D'\in M_{n-k}(\mathbb{F})$. By Lemma \ref{reduction}(c) we can assume that $p$ and $p'$ are zero polynomials. Moreover, if $D$ or $D'$ is nonzero, then there exists some linear combination of the matrices $A$, $B$ and $C$ which is of rank at least $k$, and the triple $(A,B,C)$ belongs to $\overline{R(3,n)}$ by the assumption of the proposition. Therefore we can assume that $D=D'=0$. Then the triple $(A,B,C)$ belongs to a subvariety of $N(3,n)$ which is isomorphic to $\{(\mathbf{a},\mathbf{a'},\mathbf{b},\mathbf{b'})\in \mathbb{F}^{4(n-k)};\mathbf{a}^T\mathbf{b'}=\mathbf{a'}^T\mathbf{b}\}$. This variety is irreducible, since it is defined by one irreducible polynomial. Therefore by Lemma \ref{irr-open} we can assume any open condition on the matrices $B$ and $C$. We assume that the vectors $\mathbf{a}$ and $\mathbf{a'}$ are linearly independent and that $\mathbf{a}^T\mathbf{b}\ne 0$ and $\mathbf{a'}^T\mathbf{b'}\ne 0$. In particular, the vector $\mathbf{a}$ is nonzero and by Lemma \ref{reduction}(a) we can assume that $\mathbf{a}=\mathbf{e_1}$. Let $\beta =\mathbf{e_1}^T\mathbf{b}$ and $Q=\left[
\begin{array}{cc}1&0\\\mathbf{\tilde{b}}&I
\end{array}
\right] \in GL_{n-k}(\mathbb{F})$ where $\mathbf{\tilde{b}}$ is the vector of the last $n-k-1$ components of $\frac{1}{\beta}\mathbf{b}$. Then $\mathbf{e_1}^TQ=\mathbf{e_1}^T$ and $\beta Q\mathbf{e_1}=\mathbf{b}$. Let $P=\left[
\begin{array}{cc}I&0\\0&Q
\end{array}
\right] \in C(A)$. By Lemma \ref{reduction}(a) the triple $(A,B,C)$ belongs to $\overline{R(3,n)}$ if and only if the triple $(A,P^{-1}BP,P^{-1}CP)$ does, therefore we can assume that $\mathbf{b}=\beta \mathbf{e_1}$. Moreover, by Lemma \ref{reduction} we can add any multiple of $B$ to $C$ and change the basis of $\mathbb{F}^n$, therefore we can assume that $\mathbf{a'}=\mathbf{e_2}$. The commutativity relation of $B$ and $C$ then implies that $\mathbf{e_1}^T\mathbf{b'}=0$, and similarly as we assumed $\mathbf{b}=\beta \mathbf{e_1}$ we can assume also that $\mathbf{b'}=-\gamma \mathbf{e_2}$ for some nonzero $\gamma \in \mathbb{F}$. We define $(n-k)\times (n-k)$ matrices  $X=\left[
\begin{array}{ccc}\beta\sqrt{\gamma}&\beta\sqrt{\beta}&0\\-\gamma \sqrt{\beta}&-\beta\sqrt{\gamma}&0\\0&0&0
\end{array}
\right]$ and $X'=\left[
\begin{array}{ccc}-\gamma\sqrt{\beta}&-\beta\sqrt{\gamma}&0\\ \gamma\sqrt{\gamma}&\gamma\sqrt{\beta}&0\\0&0&0
\end{array}
\right]$, where the first two rows and columns are of dimension 1 and the last ones are of dimension $n-k-2$ (possibly 0), and let $Y=\left[
\begin{array}{cc}0&0\\0&X
\end{array}
\right]$ and $Z=\left[
\begin{array}{cc}0&0\\0&X'
\end{array}
\right]$, where the first rows and columns are of dimension $k$ and the last ones of dimension $n-k$. The matrices $X$ and $X'$ are of square zero, they commute, and $\mathbf{e_1}^TX'=\mathbf{e_2}^TX$ and $\beta X'\mathbf{e_1}=-\gamma X\mathbf{e_2}$, therefore $(A,B+\lambda Y,C+\lambda Z)\in N(3,n)$ for each $\lambda \in \mathbb{F}$. Since the matrix $X$ is nonzero, for each $\lambda \ne 0$ some linear combination of $A$ and $B+\lambda Y$ has rank $k$, therefore $(A,B+\lambda Y,C+\lambda Z)\in \overline{R(3,n)}$ by the assumption of the proposition. However, then the triple $(A,B,C)$ also belongs to $\overline{R(3,n)}$.
\end{proof}

Now we deal with triples whose matrices generate a vector space of square zero matrices. We first prove a lemma, which can be seen as the nilpotent version of Proposition 4.3 of \cite{HO}. Our proof is just a slight modification of that of Theorem 1 in \cite{O}.

\begin{lemma}
Let $l$ be a positive integer, $m\ge 2$ and let $W\in M_l(\mathbb{F})$ and $V\in M_{l\times m}(\mathbb{F})$ be arbitrary. Let
$$A=\left[
\begin{array}{ccc}0&I&0\\0&0&0\\0&0&0
\end{array}
\right]\quad \mathrm{and}\quad B=\left[
\begin{array}{ccc}0&W&V\\0&0&0\\0&0&0
\end{array}
\right]$$
where the first two rows and columns are of dimension $l$ and the last ones are of dimension $m$. Then there exists a nonzero nilpotent matrix of the form $N=\left[
\begin{array}{ccc}N_1&0&0\\0&N_1&0\\0&N_2&N_3
\end{array}
\right]$  which commutes with $A$ and with $B$.
\end{lemma}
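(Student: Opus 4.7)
The plan is first to reduce the commutation conditions. For any $N$ of the prescribed block form, a direct block computation shows that $NA = AN$ holds automatically, while $NB = BN$ is equivalent to the pair of matrix equations
\[
N_1 W - W N_1 = V N_2, \qquad N_1 V = V N_3.
\]
Since $N$ is block lower triangular with diagonal blocks $N_1, N_1, N_3$, the matrix $N$ is nilpotent if and only if both $N_1$ and $N_3$ are. I will also use the fact that simultaneous conjugation of $A$, $B$ and $N$ by a block matrix of the form $X \oplus X \oplus Y$ with $X \in GL_l(\mathbb{F})$ and $Y \in GL_m(\mathbb{F})$ preserves both block forms and replaces $V$ by $X^{-1}VY$ and $W$ by $X^{-1}WX$, so such basis changes are harmless.

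I would then proceed by induction on $l$. If $V$ is not injective (automatic whenever $l < m$, and possible also for $l \ge m$), pick $0 \ne \mathbf{v} \in \ker V$ and set $N_1 = 0$, $N_3 = 0$, $N_2 = \mathbf{v}\mathbf{e_1}^T$; both equations collapse to $0 = 0$ and the resulting $N$ has square zero with nonzero $(3,2)$-block. The base case of the induction is $l = m$ with $V$ injective: after a basis change $V = I_m$, and since $m \ge 2$ there exists a nonzero nilpotent matrix in $M_m(\mathbb{F})$, which I take as $N_1$; then $N_3 = N_1$ and $N_2 = [N_1, W]$ satisfy the two equations.

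For the inductive step, with $l > m$ and $V$ injective, a basis change gives $V = \left[\begin{array}{c} I_m \\ 0 \end{array}\right]$. Writing $W = \left[\begin{array}{cc} W_{11} & W_{12} \\ W_{21} & W_{22} \end{array}\right]$ and $N_1 = \left[\begin{array}{cc} P & Q \\ R & S \end{array}\right]$ in the partition $l = m + (l - m)$, the equation $N_1 V = V N_3$ forces $R = 0$ and $N_3 = P$, while the vanishing of the bottom block-rows of $N_1 W - W N_1 = V N_2$ becomes exactly
\[
S W_{21} = W_{21} P, \qquad [S, W_{22}] = W_{21} Q,
\]
and the top block-rows then freely determine $N_2$. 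These two conditions on the triple $(S, Q, P)$, with $S \in M_{l-m}(\mathbb{F})$ and $P \in M_m(\mathbb{F})$ required to be nilpotent, are precisely the reduced form of the same system appearing in the statement of the lemma but with parameters $(l, V, W)$ replaced by $(l - m, W_{21}, W_{22})$ and $m$ unchanged. Since $l - m < l$, the inductive hypothesis yields such a nonzero nilpotent triple, and lifting it to $N_1 = \left[\begin{array}{cc} P & Q \\ 0 & S \end{array}\right]$ produces the required nonzero nilpotent $N$ for the original problem.

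The main obstacle I expect is recognizing the self-similarity of the reduced equations with the original pair; once that is spotted, the rest is a clean induction on $l$ with a single nontrivial base case in which the hypothesis $m \ge 2$ is used to guarantee a nonzero nilpotent $m \times m$ matrix.
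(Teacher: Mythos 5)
Your argument is correct, and the underlying strategy coincides with the paper's: dispose of the case of non-injective $V$ directly, then normalize $V$ to a partial identity and peel off $m$ rows and columns at a time. What you do differently is the organization. The paper, following Omladi\v{c}'s argument, iterates an explicit normal form
$[W\ V]=\left[\begin{smallmatrix}W_1&W_2&&\\&&I&\\&&&\ddots\\&&&&I\end{smallmatrix}\right]$
maintained through conjugations of the type $\left[\begin{smallmatrix}Q&&\\&Q&\\&R&I\end{smallmatrix}\right]$ (the off-diagonal $R$ block is used to zero out the bottom rows of $W$), and only builds the nilpotent matrix once the iteration halts. You avoid the $R$-conjugation entirely, absorbing the top block-rows of $N_1W-WN_1=VN_2$ into the free choice of $N_2$, and instead notice that the remaining conditions $SW_{21}=W_{21}P$, $[S,W_{22}]=W_{21}Q$ (with $S,P$ nilpotent) are exactly the commutation system for the lemma with $(l,V,W)$ replaced by $(l-m,W_{21},W_{22})$. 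That self-similarity observation turns the paper's explicit iterated normal form into a clean induction on $l$, with the hypothesis $m\ge2$ used in the same single place (to supply a nonzero nilpotent $m\times m$ matrix when the descent stops). Your proof is therefore a tidier presentation of the same peeling mechanism; it buys brevity and transparency at the price of not producing the matrix $N$ explicitly, which is immaterial here since only existence is asserted.
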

\begin{proof}
If the matrix $V$ is not injective, then there exists a nonzero matrix $X\in M_{m\times l}(\mathbb{F})$ such that $VX=0$. Then the matrix $N=\left[
\begin{array}{ccc}0&0&0\\0&0&0\\0&X&0
\end{array}
\right]$ is nilpotent and it commutes with $A$ and with $B$.

In the sequel we assume that $V$ is injective, and in particular $l\ge m$. The conclusion of the lemma clearly remains the same if we conjugate the matrix $B$ by any invertible matrix of the form $P=\left[
\begin{array}{ccc}P_1&0&0\\0&P_1&0\\0&P_2&P_3
\end{array}
\right]$ (which belongs to the centralizer of $A$). In particular, since $V$ is injective, there exist $R\in M_{m\times l}(\mathbb{F})$ and $Q\in GL_l(\mathbb{F})$ such that $[Q^{-1}WQ+Q^{-1}VR\quad Q^{-1}V]=\left[
\begin{array}{ccc}W_1'&W_2'&0\\0&0&I
\end{array}
\right]$ for some matrices $W_1'\in M_{l-m}(\mathbb{F})$ and $W_2'\in M_{(l-m)\times m}(\mathbb{F})$. Since we can conjugate the matrix $B$ by the matrix $P=\left[
\begin{array}{ccc}Q&0&0\\0&Q&0\\0&R&I
\end{array}
\right]$, we can assume that $[W\quad V]=\left[
\begin{array}{ccc}W_1'&W_2'&0\\0&0&I
\end{array}
\right]$ where the first row and column are of dimension $l-m$ and the others are of dimension $m$. Assume inductively that, for some positive integer $t$, $[W\quad V]=\left[
\begin{array}{ccccc}W_1&W_2\\&&I\\&&&\ddots\\&&&&I
\end{array}
\right]$ where the first row and column are of dimension $l-tm$ and the other $t$ rows and $t+1$ columns are of dimension $m$.

Assume first that $l-tm\ne 0$. If $W_2$ is not injective, then (because of $m\ge 2$) there exists a nonzero nilpotent matrix $N'\in M_m(\mathbb{F})$ such that $W_2N'=0$. We define $N''=\left[
\begin{array}{ccccc}0\\&N'\\&&N'\\&&&\ddots\\&&&&N'
\end{array}
\right]$ where the first row and column are of dimension $l-tm$ and the other $t$ rows and columns are of dimension $m$. The matrix  $N=\left[
\begin{array}{ccc}N''&0&0\\0&N''&0\\0&0&N'
\end{array}
\right]$ is then nilpotent and it commutes with $A$ and with $B$. On the other hand, if $W_2$ is injective, then $l-tm$ is not smaller than $m$ and there exist $R\in M_{m\times (l-tm)}(\mathbb{F})$ and $S \in GL_{l-tm}(\mathbb{F})$ such that $[S^{-1}W_1S+S^{-1}W_2R\quad S^{-1}W_2]=\left[
\begin{array}{ccc}\widetilde{W_1}&\widetilde{W_2}&0\\0&0&I
\end{array}
\right]$ for some matrices $\widetilde{W_1}\in M_{l-(t+1)m}(\mathbb{F})$ and $\widetilde{W_2}\in M_{(l-(t+1)m)\times m}(\mathbb{F})$. Let $\widehat{W_1}=\left[
\begin{array}{cc}\widetilde{W_1}&\widetilde{W_2}\\0&0
\end{array}
\right] \in M_{l-tm}(\mathbb{F})$, $\widehat{W_2}=\left[
\begin{array}{c}0\\I
\end{array}
\right] \in M_{(l-tm)\times m}(\mathbb{F})$,
$$Q=\left[
\begin{array}{cccccc}S\\R&I\\R\widehat{W_1}&R\widehat{W_2}&I\\R \widehat{W_1}^2&R\widehat{W_1}\widehat{W_2}&R \widehat{W_2}&I\\\vdots&\vdots&\vdots&\ddots&\ddots\\ R\widehat{W_1}^{t-1}&R\widehat{W_1}^{t-2} \widehat{W_2}&R\widehat{W_1}^{t-3}\widehat{W_2}& \cdots&R\widehat{W_2}&I
\end{array}
\right] \in M_l(\mathbb{F})$$
and $T=\left[
\begin{array}{ccccc}R\widehat{W_1}^t&R\widehat{W_1}^{t-1} \widehat{W_2}&R\widehat{W_1}^{t-2}\widehat{W_2}& \cdots&R\widehat{W_2}
\end{array}
\right] \in M_{m\times l}(\mathbb{F})$. Since we can conjugate the matrix $B$ by the matrix $P=\left[
\begin{array}{ccc}Q&0&0\\0&Q&0\\0&T&I
\end{array}
\right]$, we can assume that $[W\quad V]=\left[
\begin{array}{ccccc}\widetilde{W_1}&\widetilde{W_2}\\&&I\\&&& \ddots \\&&&&I
\end{array}
\right]$, where the first row and column are of dimension $l-(t+1)m$ and the other $t+1$ rows and $t+2$ columns are of dimension $m$, and we can proceed with the induction.

However, if $l-tm=0$, then because of $m\ge 2$ there exists a nonzero nilpotent matrix $N'\in M_m(\mathbb{F})$. We define $N''=\left[
\begin{array}{ccc}N'\\&\ddots\\&&N'
\end{array}
\right] \in M_l(\mathbb{F})$ and $N=\left[
\begin{array}{ccc}N''&0&0\\0&N''&0\\0&0&N'
\end{array}
\right]$. Then $N$ is a nilpotent matrix that commutes with $A$ and with $B$, which completes the proof of the lemma.
\end{proof}

\begin{rmk}
Note that the above proof would not work for $m=1$, even if we assume that $\mathrm{rank}\, B\le l-1$, which was assumed in \cite{HO} and \cite{O}. As we will see in the proof of the next proposition, the lemma still holds if $m=1$ and $\mathrm{rank}\, B\le l-1$, but it has to be proved slightly differently.
\end{rmk}

\begin{prop}\label{square-zero}
Let $l$ be a positive integer, $m$ a nonnegative integer and $n=2l+m$. Assume that each triple of commuting nilpotent $n\times n$ matrices whose linear span contains either a matrix of rank at least $l+1$ or a matrix of rank $l$ with nonzero square belongs to $\overline{R(3,n)}$. Let $(A,B,C)\in N(3,n)$ be a triple such that the Jordan canonical form of $A$ has $l$ Jordan blocks of order 2 and $m$ zero Jordan blocks. Then $(A,B,C)\in \overline{R(3,n)}$.
\end{prop}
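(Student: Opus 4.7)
The plan is to put $A$ into the standard block form $A=\left[\begin{array}{ccc}0&I_l&0\\0&0&0\\0&0&0\end{array}\right]$ with block sizes $l,l,m$, using Lemma \ref{reduction}(a). Every matrix in $C(A)$ then takes the block form $X=\left[\begin{array}{ccc}X_1&X_2&X_3\\0&X_1&0\\0&X_4&X_5\end{array}\right]$, nilpotent iff $X_1$ and $X_5$ are. I write $B$ and $C$ in this form with corresponding subscripted blocks.

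The key reduction comes from a direct rank computation. For any $X\in C(A)$, evaluating $(A+tX)e_j$ on the standard basis and projecting modulo the image $V_1$ of $A$, one obtains for every small $t\ne 0$ the formula
$$\text{rank}(A+tX)=l+\text{rank}\left[\begin{array}{cc}X_1&0\\X_4&X_5\end{array}\right].$$
Applied with $X=\alpha B+\beta C$ for arbitrary $\alpha,\beta\in\mathbb{F}$, the hypothesis --- which forbids the span from containing any matrix of rank exceeding $l$ --- forces the stacked matrix on the right to vanish identically in $\alpha,\beta$, so $B_1=B_4=B_5=0$ and $C_1=C_4=C_5=0$. Both $B$ and $C$ therefore take the top-row-only form $\left[\begin{array}{ccc}0&W&V\\0&0&0\\0&0&0\end{array}\right]$ required by the preceding lemma; any two such matrices trivially satisfy $BC=CB=0$, so no further constraint is imposed by $[B,C]=0$. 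Every element of the span now has square zero, making the second disjunct of the hypothesis vacuous.

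Assuming $m\ge 2$ so that the preceding lemma applies, I use it on $(A,B)$ to produce a nonzero nilpotent $N=\left[\begin{array}{ccc}N_1&0&0\\0&N_1&0\\0&N_2&N_3\end{array}\right]$ commuting with $A$ and $B$. The perturbed triple $(A,B,C+\lambda N)$ then belongs to $N(3,n)$ for every $\lambda$: pairwise commutativity is immediate from $[N,A]=[N,B]=0$, and $C+\lambda N\in C(A)$ has nilpotent diagonal blocks $\lambda N_1,\lambda N_3$, hence is itself nilpotent. Reapplying the rank formula with $X=C+\lambda N$ gives
$$\text{rank}(A+t(C+\lambda N))=l+\text{rank}\left[\begin{array}{cc}\lambda N_1&0\\\lambda N_2&\lambda N_3\end{array}\right],$$
and since $N\ne 0$ forces at least one of $N_1,N_2,N_3$ to be nonzero, this stacked matrix is nonzero for $\lambda\ne 0$, yielding rank exceeding $l$. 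The span of the perturbed triple therefore contains a matrix of rank at least $l+1$, and by hypothesis $(A,B,C+\lambda N)\in\overline{R(3,n)}$ for every $\lambda\ne 0$. Taking the limit $\lambda\to 0$ yields $(A,B,C)\in\overline{R(3,n)}$.

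The main obstacle is the boundary cases $m=0$ and $m=1$, where the preceding lemma does not directly apply. The remark after that lemma already signals that a mild modification of its proof covers $m=1$ under the auxiliary assumption $\text{rank}\,B\le l-1$, which after the reduction above is almost automatic from the top-row-only form; a parallel adjustment handles $m=0$, where the third block column of $A$ is absent and the construction of $N$ must be rebuilt within the smaller centralizer. With these technicalities settled, the rest of the argument --- reduction to top-row-only form and the perturbation-plus-limit step --- is a clean consequence of the hypothesis and the block structure of $C(A)$.
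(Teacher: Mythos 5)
Your overall strategy (put $A$ in the block form, reduce $B,C$ to a top-row-only form, then perturb by a commuting nilpotent $N$ from the preceding lemma and invoke the hypothesis on the perturbed triple) does track the paper's argument, but the central reduction step is carried by a rank formula that is simply incorrect, and this hides the bulk of the actual work.

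The claimed identity $\mathrm{rank}(A+tX)=l+\mathrm{rank}\left[\begin{smallmatrix}X_1&0\\X_4&X_5\end{smallmatrix}\right]$ fails already for $l=2$, $m=0$: take $X_1=\left[\begin{smallmatrix}0&1\\0&0\end{smallmatrix}\right]$ and $X_2=0$, so that $A+tX=\left[\begin{smallmatrix}0&t&1&0\\0&0&0&1\\0&0&0&t\\0&0&0&0\end{smallmatrix}\right]$. This matrix has rank $2$ for every $t$, while your formula predicts $3$. The slip comes from the ``project modulo $\mathrm{im}\,A$'' step: columns in the first $l$ positions project to $0$ (not to columns of $X_1$), and the $l$-dimensional contribution from the second block of columns is not guaranteed to be independent of the rest. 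Consequently the rank hypothesis does \emph{not} force $B_1=B_4=B_5=0$, and the reduction to top-row-only form is unproven.

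In the paper this reduction is in fact the hard part. Getting $B_1=C_1=0$ (the blocks the paper calls $D,D'$) is first handled in characteristic $\ne 2$ by perturbing $B+\lambda A$ and looking at the $\lambda$-coefficient of $(B+\lambda A)^2$, but in characteristic $2$ this degenerates and an argument via double centralizers ($C(C(X))=\mathbb{F}[X]$) is needed. After that, the constraints derived from square-zero and bounded rank give $FG=0$ and $GF=0$, and a further case analysis is used to reach $G=G'=0$ (or $F=F'=0$), after which Lemma~\ref{irr-open} is invoked to put the triple inside an \emph{irreducible} ambient variety before assuming that some combination of $F,F'$ has full rank. None of these moves are present or replaceable by your rank computation.

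Finally, the edge cases $m=0$ and $m=1$ are genuinely different and are left as ``technicalities'' in your write-up: for $m=0$ the paper finds a nilpotent $N'$ commuting with the non-generic block $E$ (using that non-generic matrices form a hypersurface), and for $m=1$ it invokes Westwick's theorem on spaces of matrices of bounded rank to find a linear combination of rank $\le l-1$. These are not immediate consequences of the $m\ge 2$ argument and need to be spelled out.
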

\begin{proof}
By Lemma \ref{reduction}(a) we can conjugate the matrices by any invertible $n\times n$ matrix, therefore we can assume that $A=\left[
\begin{array}{ccc}0&I&0\\0&0&0\\0&0&0
\end{array}
\right]$, where the first two block rows and columns are of dimension $l$ and the last ones are of dimension $m$ (possibly 0). Then $B=\left[
\begin{array}{ccc}D&E&F\\0&D&0\\0&G&H
\end{array}
\right]$ and $C=\left[
\begin{array}{ccc}D'&E'&F'\\0&D'&0\\0&G'&H'
\end{array}
\right]$ for some matrices $D,D',E,E'\in M_l(\mathbb{F})$, $F,F'\in M_{l\times m}(\mathbb{F})$, $G,G'\in M_{m\times l}(\mathbb{F})$ and $H,H'\in M_m(\mathbb{F})$. By the assumption of the proposition we can assume that each linear combination of the matrices $A$, $B$ and $C$ is a square-zero matrix of rank at most $l$. The second condition immediately implies that $H=H'=0$ if $m>0$.

If $D\ne 0$ or $D'\ne 0$, then by Lemma \ref{reduction}(b) we can assume that $D\ne 0$ and that either the matrices $D$ and $D'$ are linearly independent or $D'=0$. The coefficient at $\lambda$ in the polynomial $(B+\lambda A)^2$ is equal to $\left[
\begin{array}{ccc}0&2D&0\\0&0&0\\0&0&0
\end{array}
\right]$. If $\mathrm{char}\, \mathbb{F}\ne 2$, then there exists $\lambda \in \mathbb{F}$ such that $(B+\lambda A)^2\ne 0$, and therefore $(A,B,C)\in \overline{R(3,n)}$ by the assumption of the proposition. Therefore we can assume that $\mathrm{char}\, \mathbb{F}=2$. The condition $B^2=0$ is equivalent to the equations $D^2=0$, $DE+ED+FG=0$, $DF=0$ and $GD=0$. In particular, $D$ and $D'$ are square-zero matrices, and the condition that either $D$ and $D'$ are linearly independent or $D'=0$ implies that the algebra $\mathbb{F}[D]=\mathbb{F}\cdot I+\mathbb{F}\cdot D$ is not contained in the algebra $\mathbb{F}[D']=\mathbb{F}\cdot I+\mathbb{F}\cdot D'$. Recall that for any matrix $X\in M_l(\mathbb{F})$ we denoted by $C(X)$ the centralizer of $X$. Similarly we denote by $C(C(X))$ the centralizer of the algebra $C(X)$ in $M_l(\mathbb{F})$ (i.e. the set of all matrices from $M_l(\mathbb{F})$ that commute with all elements of $C(X)$) and by $Z(C(X))$ the center of the algebra $C(X)$ (i.e. $Z(C(X))=C(C(X))\cap C(X)$). Theorem 7 in Chapter 1 of \cite{ST} yields $Z(C(X))=\mathbb{F}[X]$ for all $X\in M_l(\mathbb{F})$, and since each matrix that commutes with all elements of $C(X)$ clearly belongs to $C(X)$, we obtain $C(C(X))=\mathbb{F}[X]$ for all $X\in M_l(\mathbb{F})$. Since the algebra $\mathbb{F}[D]$ is not contained in $\mathbb{F}[D']$, the centralizer $C(D')$ is not contained in $C(D)$. Therefore there exists a matrix $Y\in C(D')\backslash C(D)$. Note that then $DY+YD\ne 0$, since $\mathrm{char}\, \mathbb{F}=2$. The matrix $X=\left[
\begin{array}{ccc}0&Y&0\\0&0&0\\0&0&0
\end{array}
\right]$ then commutes with $A$ and $C$, and $(B+\lambda X)^2\ne 0$ for $\lambda \ne 0$. The assumption of the proposition then implies that $(A,B+\lambda X,C)\in \overline{R(3,n)}$ for all $\lambda \ne 0$, hence $(A,B,C)\in \overline{R(3,n)}$. In the rest of the proof we can therefore assume that $D=D'=0$.

Assume first that $m=0$. Since the variety of all $l\times l$ matrices that are not generic is $(l^2-1)$-dimensional, its intersection with each 2-dimensional vector subspace of $M_l(\mathbb{F})$ is at least 1-dimensional by Theorem 6 in Chapter I, \S 6.2 of \cite{Sh}. In particular, there exist $\lambda ,\mu \in \mathbb{F}$, not both of them zero, such that the matrix $\lambda E+\mu E'$ is not generic. Moreover, by Lemma \ref{reduction}(b) we can assume that $E$ is not generic. Therefore it commutes with some nonzero nilpotent matrix $N'\in M_l(\mathbb{F})$. The matrix $N=\left[
\begin{array}{cc}N'&0\\0&N'
\end{array}
\right]$ is then nilpotent and it commutes with $B$, so $(A,B,C+\lambda N)\in N(3,n)$ for each $\lambda \in \mathbb{F}$. Since $N'\ne 0$, the triple $(A,B,C+\lambda N)$ belongs to $\overline{R(3,n)}$ for all $\lambda \ne 0$ by the previous paragraph, therefore $(A,B,C)\in \overline{R(3,n)}$.

In the rest of the proof we will assume that $m>0$. Since $B^2=0$, it follows that $FG=0$. Moreover, for each $\lambda\in \mathbb{F}$ the rank of the matrix $\lambda A+B$ is at most $l$, which implies that $\mathrm{rank}\, \left[
\begin{array}{cc}\lambda I+E&F\\G&0
\end{array}
\right] \le l$. In particular, for each $i,j=1,\ldots ,m$, $\det \left[
\begin{array}{cc}\lambda I+E&F\mathbf{e_i}\\\mathbf{e_j}^TG&0
\end{array}
\right] =0$. The coefficient at $\lambda ^{l-1}$ in this determinant is equal to
$$\sum _{k=1}^l\mathbf{e_j}^TG\mathbf{e_k}\mathbf{e_k}^TF \mathbf{e_i}=\mathbf{e_j}^TGF\mathbf{e_i},$$
therefore $\mathbf{e_j}^TGF\mathbf{e_i}=0$ for all $i,j=1,\ldots ,m$, i.e. $GF=0$.

Assume first that there exist $\mu ,\nu \in \mathbb{F}$ such that the matrices $\mu G+\nu G'$ and $\mu F+\nu F'$ are both nonzero. By Lemma \ref{reduction}(b) we can then assume that $F$ and $G$ are nonzero matrices. Since $GF=0$ and $FG=0$, it follows that $m>1$ and that the rank of the matrices $F$ and $G$ is strictly smaller than $\min \{m,l\}$. By Lemma \ref{reduction}(a) we can conjugate the matrices $B$ and $C$ by any invertible matrix in the centralizer of $A$, therefore we can assume that $F=\sum _{i=1}^k\mathbf{e_i}\mathbf{e_i}^T$ for some $k\le \min \{m,l\}-1$. In particular, the condition $FG=0$ implies that $\mathbf{e_1}^TG=0$. The matrix $N=\left[
\begin{array}{ccc}0&0&0\\0&0&0\\0&0&\mathbf{e_m}\mathbf{e_1}^T
\end{array}
\right]$ is then nilpotent and it commutes with $A$ and with $B$. Moreover, for each $\lambda \ne 0$ there exists a linear combination of $A$ and $C+\lambda N$ which has rank at least $l+1$, so $(A,B,C+\lambda N)\in \overline{R(3,n)}$ by the assumption of the proposition, and therefore $(A,B,C)\in \overline{R(3,n)}$.

We can now assume that for all $\mu ,\nu \in \mathbb{F}$ we have $\mu F+\nu F'=0$ or $\mu G+\nu G'=0$, which is equivalent to $F=F'=0$ or $G=G'=0$, and by Lemma \ref{reduction}(e) we can assume that $G=G'=0$. The triple $(A,B,C)$ belongs to the $2l(m+l)$-dimensional affine space of all triples of $n\times n$ matrices such that the first matrix of the triple is equal to $A$ and the other two matrices have nonzero entries only in the last two block columns of the first block row. This is an irreducible subvariety of $N(3,n)$, therefore by Lemma \ref{irr-open} we can assume any open condition on $A$, $B$ and $C$, and we assume that some linear combination of $F$ and $F'$ is of full rank (i.e. equal to $\min \{m,l\}$).

If $m\ge 2$, then we can use the previous lemma to obtain a nonzero nilpotent $n\times n$ matrix of the form $N=\left[
\begin{array}{ccc}N_1&0&0\\0&N_1&0\\0&N_2&N_3
\end{array}
\right]$ that commutes with $A$ and $B$, therefore $(A,B,C+\lambda N)\in N(3,n)$ for all $\lambda \in \mathbb{F}$. Since $N$ is nonzero and some linear combination of $F$ and $F'$ has full rank, the triple $(A,B,C+\lambda N)$ belongs to $\overline{R(3,n)}$ for all $\lambda \ne 0$ by the cases already considered in this proof. Therefore the triple $(A,B,C)$ also belongs to $\overline{R(3,n)}$.

However, if $m=1$, then by Corollary 2.1 of \cite{W} there exists some nonzero linear combination of $A$, $B$ and $C$ that has rank at most $l-1$, and by Lemma \ref{reduction}(b) we can assume that $\mathrm{rank}\, B\le l-1$. Therefore there exists some nonzero vector $\mathbf{x}\in \mathbb{F}^l$ such that $\mathbf{x}^TE=0$ and $\mathbf{x}^TF=0$. Moreover, since $[E\quad F]$ is $l\times (l+1)$ matrix of rank at most $l-1$, its kernel is at least 2-dimensional. In particular, there exist $\mathbf{y}\in \mathbb{F}^l$ and $\zeta \in \mathbb{F}$, not both of them zero, such that $\mathbf{x}^T\mathbf{y}=0$ and $E\mathbf{y}+\zeta F=0$. The matrix $N=\left[
\begin{array}{ccc}\mathbf{y}\mathbf{x}^T&0&0\\0&\mathbf{y} \mathbf{x}^T&0\\0&\zeta \mathbf{x}^T&0
\end{array}
\right]$ is then nilpotent and it commutes with $A$ and $B$. As in the previous paragraph the triple $(A,B,C+\lambda N)$ belongs to $\overline{R(3,n)}$ for each $\lambda \ne 0$, therefore $(A,B,C)\in \overline{R(3,n)}$, which completes the proof of the proposition.
\end{proof}

To prove irreducibility of the variety $N(3,6)$ only the case of Jordan blocks of orders 3, 2 and 1 is left to be considered. We will prove that triples of commuting nilpotent matrices with such Jordan structure also belong to $\overline{R(3,6)}$. However, if the matrix $A$ is such a matrix, then the variety $N_2(A)$ is reducible, cf. Proposition \ref{N_2red}. This means that we have to perturb all three matrices in the triple, not only two of them, as it was done in all previous cases. The perturbed matrices can be computed explicitly, but expressing them in terms of the original matrices is complicated. Therefore we prefer to give a geometric proof of their existence. 

\begin{prop}\label{321}
Let $(A,B,C)$ be a triple of commuting nilpotent $6\times 6$ matrices. If the Jordan canonical form of $A$ has one Jordan block of order 3, one Jordan block of order 2 and one zero Jordan block, then the triple $(A,B,C)$ belongs to $\overline{R(3,6)}$.
\end{prop}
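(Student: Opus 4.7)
\emph{Sketch of approach.} Conjugating via Lemma \ref{reduction}(a), I may assume $A$ is in Jordan canonical form with blocks of orders $3, 2, 1$. The argument naturally splits on whether the pair $(B, C)$ lies in $\overline{D_2(A)}$, i.e., in the irreducible component of $N_2(A)$ described by Proposition \ref{D_2irr}; by the permutation symmetry of $R(3,6)$ under $GL_3$ acting on tuples, the analogous condition after swapping $B$ and $C$ (corresponding generically to $\dim \mathbb{F}[A,C]=6$) is handled identically.

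In the regular case, with $(B,C)\in\overline{D_2(A)}$, the argument from Corollary \ref{2-regN(3,n)irr} applies verbatim: on the dense open subset $D_2(A)$ of $\overline{D_2(A)}$, Theorem 1.1 of \cite{NSa} gives $C=p(A,B)$ for some polynomial $p$ with zero constant term; the irreducibility of $N(2,6)$ due to Premet \cite{P} furnishes a family $(A_t,B_t)\to (A,B)$ with $A_t$ 1-regular for $t\ne 0$, and then $(A_t,B_t,p(A_t,B_t))$ is a commuting nilpotent triple lying in $R(3,6)$ and converging to $(A,B,C)$. A closure argument extends this conclusion to all of $\overline{D_2(A)}$.

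The non-regular case, in which $(B,C)\notin\overline{D_2(A)}$ and $(C,B)\notin\overline{D_2(A)}$ (equivalently, by Theorem 2 of \cite{K}, both $\ker A\cap\ker B$ and $\ker A\cap\ker C$ have dimension at least $2$), is the genuinely difficult one: $(B,C)$ lies in an extra irreducible component of $N_2(A)$, whose existence is exactly what Proposition \ref{N_2red} witnesses. My plan here is to first use the reduction lemmas together with the explicit centralizer structure of $A$ to parametrize the extra component(s) and normalize $B, C$ into a canonical form. Then I would exhibit an irreducible subvariety $\mathcal{Z}\subseteq N(3,6)$ containing $(A,B,C)$ in which $A$ itself is allowed to deform. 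Since the orbit of Jordan type $(3,2,1)$ lies in the closure of the orbits of types $(3,3)$ and $(4,2)$, the variety $\mathcal{Z}$ can be arranged so that a generic point has 2-regular first entry; the commutativity constraint then determines the accompanying deformations of $B$ and $C$. Such generic points lie in $\overline{R(3,6)}$ by Corollary \ref{2-regN(3,n)irr}, and Lemma \ref{irr-open} propagates this to all of $\mathcal{Z}$, and in particular to $(A,B,C)$.

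The principal difficulty is the construction of $\mathcal{Z}$. Because $N_2(A)$ is reducible, the strategy of Theorem \ref{2-regN_2irr} --- keep $A$ fixed and perturb only $(B,C)$ --- breaks down, and all three matrices must deform simultaneously. As the remark preceding the proposition warns, such simultaneous perturbations can be written down explicitly but are cumbersome; the geometric approach via $\mathcal{Z}$ circumvents this, producing the perturbation implicitly through the irreducibility of an auxiliary variety, the open condition ``first entry is 2-regular'', and the already-established Corollary \ref{2-regN(3,n)irr}.
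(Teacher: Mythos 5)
Your structural outline is sound: splitting on whether $(B,C)$ or $(C,B)$ lies in $\overline{D_2(A)}$ and handling the former via the argument already inside the proof of Corollary \ref{2-regN(3,n)irr} (which indeed does not need $A$ itself to be $2$-regular once the pair is in $\overline{D_2(A)}$) is essentially how both you and the paper dispose of the easy case. The paper reaches the same conclusion through a rank argument (a linear combination of $B$ and $C$ that together with $A$ has rank $\geq 4$ puts a $2$-regular matrix in the span) together with Ko\v{s}ir's criterion $\dim(\ker A \cap \ker B)=1$, and your route is a clean reformulation of that.

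The problem is the hard case, which you openly defer. You announce that you would construct an irreducible subvariety $\mathcal{Z}\subseteq N(3,6)$ containing $(A,B,C)$ in which all three matrices deform simultaneously, with a dense open set of points having $2$-regular first entry, and then invoke Corollary \ref{2-regN(3,n)irr} and Lemma \ref{irr-open}. That is a plan, not a proof: you do not define $\mathcal{Z}$, do not prove it is irreducible, do not verify it contains $(A,B,C)$, and do not show the open condition (``first entry $2$-regular'') is nonempty on $\mathcal{Z}$. This construction and its verification are the entire technical content of the proposition, precisely because (as Proposition \ref{N_2red} shows) $N_2(A)$ is reducible here. The paper's proof spends several pages on exactly this: after normalizing $B$ and $C$ into a block form with $\mathbf{b'}=\mathbf{c'}=0$, it parametrizes an auxiliary variety $\overline{\mathcal{U}}$ of commuting nilpotent triples in a fixed block shape with $1$-regular first coordinate, parametrizes a target variety $\overline{\mathcal{U'}}$ containing the reduced triple, and proves that the natural projection $\pi\colon\overline{\mathcal{U}}\to\overline{\mathcal{U'}}$ is dominant by computing a specific fibre (solving the resulting linear and quadratic system, separately for $\mathrm{char}\,\mathbb{F}\ne 2,3$, $\mathrm{char}\,\mathbb{F}=2$, and $\mathrm{char}\,\mathbb{F}=3$) and showing its dimension is too small for $\pi$ to be non-dominant. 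None of this appears in your proposal.

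A secondary point: the paper deforms directly to $1$-regular first coordinate, not via $2$-regular. Your plan of degenerating through Jordan types $(3,3)$ and $(4,2)$ is plausible as a single-matrix statement (these orbit closures do contain type $(3,2,1)$), but you would still need to show that such a degeneration can be realized inside a family of \emph{commuting nilpotent triples} that stays irreducible, and that the triple $(A,B,C)$ in your canonical form actually lies in the closure. That is exactly the same order of difficulty as the paper's dominance argument, so the route through $2$-regularity does not save work --- it simply relocates it. Also, your characterization ``both $\ker A\cap\ker B$ and $\ker A\cap\ker C$ have dimension at least $2$'' is the negation of $(B,C)\in D_2(A)$ and $(C,B)\in D_2(A)$, not of membership in the \emph{closures}; the case split as you wrote it is therefore not exhaustive unless you add the boundary of $D_2(A)$ to the easy case (which, to be fair, your first paragraph does implicitly by working with $\overline{D_2(A)}$).
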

\begin{proof}
By Lemma \ref{reduction}(a) we can assume that the matrix $A$ is in the Jordan canonical form. Since the matrices $B$ and $C$ are nilpotent and they commute with $A$, they look like $B=\left[
\begin{array}{cccccc}0&a&b&c&d&e\\0&0&a&0&c&0\\0&0&0&0&0&0\\ 0&f&g&0&h&i\\0&0&f&0&0&0\\0&0&j&0&k&0
\end{array}
\right]$ and $C=\left[
\begin{array}{cccccc}0&a'&b'&c'&d'&e'\\0&0&a'&0&c'&0\\0&0&0&0&0&0 \\ 0&f'&g'&0&h'&i'\\0&0&f'&0&0&0\\0&0&j'&0&k'&0
\end{array}
\right]$. If there is some $\lambda \in \mathbb{F}$ such that $c+\lambda c'$ and $f+\lambda f'$ are both nonzero, then the matrix $B+\lambda C$ has rank at least 4 and $(A,B,C)\in \overline{R(3,6)}$ by Corollary \ref{2-regN(3,n)irr}. Therefore we can in the sequel assume that $c=c'=0$ or $f=f'=0$, and by Lemma \ref{reduction}(e) we can assume that $f=f'=0$. We will consider two cases.

{\bf Case 1.} Assume that $c\ne 0$ or $c'\ne 0$. By Lemma \ref{reduction}(b) we can assume that $c\ne 0$. If $i+\lambda i'\ne 0$ for some $\lambda \in \mathbb{F}$, then we can by Lemma \ref{reduction}(b) assume that $i\ne 0$. However, then $\dim (\ker A\cap \ker B)=1$, so Theorem 2 of \cite{K} implies that $\dim \mathbb{F}[A,B]=6$, i.e. $(B,C)\in D_2(A)$, and therefore $(A,B,C)\in \overline{R(3,6)}$ (see the proof of Corollary \ref{2-regN(3,n)irr}). In the rest of Case 1 we can therefore assume that $i=i'=0$. To simplify the notation of the matrices we will change the basis of $\mathbb{F}^6$. Let $P=\left[
\begin{array}{cccccc}1&0&0&0&0&0\\0&1&0&0&0&0\\0&0&0&0&1&0\\ 0&0&1&0&0&0\\0&0&0&0&0&1\\0&0&0&1&0&0
\end{array}
\right]$. By Lemma \ref{reduction}(a) the triple $(A,B,C)$ belongs to $\overline{R(3,6)}$ if and only if the triple $(P^{-1}AP,P^{-1}BP,P^{-1}CP)$ does, therefore we will assume that the matrices $A$, $B$ and $C$ are in the following block form:
$$A=\left[
\begin{array}{ccc}0&\mathbf{e_1}^T&0\\0&0&\widetilde{I}\\0&0&0
\end{array}
\right] ,\quad B=\left[
\begin{array}{ccc}0&\mathbf{b}^T&\mathbf{b'}^T\\0&0& \widetilde{B}\\0&0&0
\end{array}
\right] \quad \mathrm{and}\quad C=\left[
\begin{array}{ccc}0&\mathbf{c}^T&\mathbf{c'}^T\\0&0& \widetilde{C}\\0&0&0
\end{array}
\right]$$
where the first rows and columns are of dimension 1, the second ones of dimension 3 and the last ones of dimension 2, and
$$\widetilde{I}=\left[
\begin{array}{cc}1&0\\0&1\\0&0
\end{array}
\right] ,\quad \widetilde{B}=\left[
\begin{array}{cc}a&c\\g&h\\j&k
\end{array}
\right] ,\quad \widetilde{C}=\left[
\begin{array}{cc}a'&c'\\g'&h'\\j'&k'
\end{array}
\right] ,$$
$$\mathbf{b}=\left[
\begin{array}{c}a\\c\\e
\end{array}
\right] ,\mathbf{c}=\left[
\begin{array}{c}a'\\c'\\e'
\end{array}
\right] ,\mathbf{b'}=\left[
\begin{array}{c}b\\d
\end{array}
\right] ,\mathbf{c'}=\left[
\begin{array}{c}b'\\d'
\end{array}
\right] .$$
The commutativity relation of $B$ and $C$ is equivalent to $\mathbf{b}^T\widetilde{C}=\mathbf{c}^T \widetilde{B}$. Since $\mathbf{b}^T\mathbf{e_2}\ne 0$, the entries of the second row of $\widetilde{C}$ can be expressed as rational functions in $\mathbf{b}$, $\mathbf{c}$, $\widetilde{B}$ and the other two rows of $\widetilde{C}$, therefore the triple $(A,B,C)$ belongs to a rationally parameterized subvariety of $N(3,6)$, which is irreducible by Proposition 6 in Chapter 4, \S 5 of \cite{CLO'S}. In particular, by Lemma \ref{irr-open} we can assume any open condition on the matrices $A$, $B$ and $C$. We assume that the vectors $\mathbf{e_1}$, $\mathbf{b}$ and $\mathbf{c}$ are linearly independent. Therefore there exists a matrix $Q\in M_{3\times 2}(\mathbb{F})$ such that $\mathbf{e_1}^TQ=0$, $\mathbf{b}^TQ=\mathbf{b'}^T$ and $\mathbf{c}^TQ=\mathbf{c'}^T$. Let $S=\left[
\begin{array}{ccc}1&0&0\\0&I&Q\\0&0&I
\end{array}
\right]$. Since by Lemma \ref{reduction}(a) the triple $(A,B,C)$ belongs to $\overline{R(3,6)}$ if and only if the triple $(SAS^{-1},SBS^{-1},SCS^{-1})$ does, we can assume that $\mathbf{b'}=\mathbf{c'}=0$.

Let $\mathcal{V}$ be the variety of all triples $(X_1,X_2,X_3)\in N(3,6)$ such that for each $i=1,2,3$ the matrix $X_i$ is of the form
\begin{equation}\label{X_i}
X_i=\left[
\begin{array}{ccc}0&\mathbf{x_i}^T&0\\0&Y_i&Z_i\\0&0&\zeta _iJ
\end{array}
\right]
\end{equation}
for some $\mathbf{x_i}\in \mathbb{F}^3$, $Y_i\in M_3(\mathbb{F})$, $Z_i\in M_{3\times 2}(\mathbb{F})$ and $\zeta _i\in \mathbb{F}$, where $J$ denotes the nilpotent $2\times 2$ Jordan block. Furthermore, let $\mathcal{U}$ be the open subset of $\mathcal{V}$ consisting of all triples $(X_1,X_2,X_3)\in \mathcal{V}$ with $X_1$ 1-regular and $\mathbf{x_1},\mathbf{x_2},\mathbf{x_3}$ linearly independent, let $\mathcal{U}_1$ be the set of all $6\times 6$ matrices $X_1$ such that $(X_1,X_2,X_3)\in \mathcal{U}$ for some $X_2,X_3\in M_6(\mathbb{F})$, and let $\mathcal{U}_1'$ be the set of all 1-regular nilpotent matrices of the form (\ref{X_i}). Clearly $\mathcal{U}_1\subseteq \mathcal{U}_1'$ and it is also clear that the matrix $X_i$ of the form (\ref{X_i}) is nilpotent if and only if $Y_i$ is nilpotent. Moreover, since $X_1^5=\left[
\begin{array}{ccc}0&0&\zeta _1\mathbf{x_1}^TY_1^2Z_1J\\0&0&0\\0&0&0
\end{array}
\right]$, 1-regularity of $X_1$ implies $\zeta _1\mathbf{x_1}^TY_1^2Z_1\mathbf{e_1}\ne 0$, therefore we can define rational map $\varphi \colon \mathcal{U}_1'\times \mathbb{F}^6\to \mathcal{V}$ by
$$\varphi (X,\alpha ,\beta ,\gamma ,\alpha ',\beta ',\gamma ')=\big(X,\alpha X+\beta X^2+\gamma X^3-\frac{\beta \mathbf{x}^TZ\mathbf{e_1}+\gamma \mathbf{x}^TYZ\mathbf{e_1}}{\mathbf{x}^TY^2Z\mathbf{e_1}}X^4+$$
$$(\frac{(\beta \mathbf{x}^TZ\mathbf{e_1}+\gamma \mathbf{x}^TYZ\mathbf{e_1})(\zeta \mathbf{x}^TYZ\mathbf{e_1}+\mathbf{x}^TY^2Z \mathbf{e_2})}{\zeta (\mathbf{x}^TY^2Z\mathbf{e_1})^2}-\frac{\beta \mathbf{x}^TZ\mathbf{e_2}+\gamma (\zeta \mathbf{x}^TZ\mathbf{e_1}+\mathbf{x}^TYZ \mathbf{e_2})} {\zeta \mathbf{x}^TY^2Z\mathbf{e_1}})X^5,$$
$$\alpha 'X+\beta 'X^2+\gamma 'X^3-\frac{\beta ' \mathbf{x}^TZ\mathbf{e_1}+\gamma ' \mathbf{x}^TYZ\mathbf{e_1}}{\mathbf{x}^TY^2Z\mathbf{e_1}}X^4+$$
$$(\frac{(\beta '\mathbf{x}^TZ\mathbf{e_1}+\gamma ' \mathbf{x}^TYZ\mathbf{e_1})(\zeta \mathbf{x}^TYZ\mathbf{e_1}+\mathbf{x}^TY^2Z \mathbf{e_2})}{\zeta (\mathbf{x}^TY^2Z\mathbf{e_1})^2}-\frac{\beta ' \mathbf{x}^TZ\mathbf{e_2}+\gamma '(\zeta \mathbf{x}^TZ\mathbf{e_1}+\mathbf{x}^TYZ \mathbf{e_2})} {\zeta \mathbf{x}^TY^2Z\mathbf{e_1}})X^5\big)$$
where $X=\left[
\begin{array}{ccc}0&\mathbf{x}^T&0\\0&Y&Z\\0&0&\zeta J
\end{array}
\right]$. This map is clearly injective.

If $(X_1,X_2,X_3)\in \mathcal{U}$, then $X_2$ and $X_3$ are polynomials in 1-regular matrix $X_1$, i.e. $(X_1,X_2,X_3)=\varphi (X_1,\alpha ,\beta ,\gamma ,\alpha ',\beta ',\gamma ')$ for some $\alpha ,\beta ,\gamma ,\alpha ',\beta ',\gamma '\in \mathbb{F}$. Since $\mathbf{x_1}$, $\mathbf{x_2}$ and $\mathbf{x_3}$ are linearly independent, we obtain that $\mathbf{x_1}^T$, $\mathbf{x_1}^TY_1$ and $\mathbf{x_1}^TY_1^2$ are linearly independent and that $(\beta ,\gamma )$ and $(\beta ',\gamma ')$ are linearly independent. Conversely, if a nilpotent matrix $X_1$ of the form (\ref{X_i}) is 1-regular, $\mathbf{x_1}^T,\mathbf{x_1}^TY_1,\mathbf{x_1}^TY_1^2$ are linearly independent, $(\beta ,\gamma ),(\beta ',\gamma')\in \mathbb{F}^2$ are linearly independent and $\alpha ,\alpha '\in \mathbb{F}$ are arbitrary, then $\varphi (X_1,\alpha ,\beta ,\gamma ,\alpha ',\beta ',\gamma ')\in \mathcal{U}$. Therefore $\mathcal{U}_1$ consists of all 1-regular nilpotent matrices of the form (\ref{X_i}) such that $\mathbf{x_1}^T$, $\mathbf{x_1}^TY_1$ and $\mathbf{x_1}^TY_1^2$ are linearly independent, and $\mathcal{U}=\varphi (\mathcal{U}_1\times \mathcal{U}_2)$ where $\mathcal{U}_2=\{(\alpha ,\beta ,\gamma ,\alpha ',\beta ',\gamma ')\in \mathbb{F}^6;\beta \gamma '\ne \beta '\gamma\}$. In particular, $\mathcal{U}_2$ is open in $\mathbb{F}^6$ and $\mathcal{U}_1$ is open in the variety of all nilpotent matrices of the form (\ref{X_i}) which is isomorphic to $\mathbb{F}^{10}\times N_3$. Moreover, $\mathcal{U}_1$ is nonempty, since it contains the nilpotent $6\times 6$ Jordan block. $\mathbb{F}^{10}\times N_3$ is irreducible, therefore $\overline{\mathcal{U}_1}$ is irreducible and $\dim \overline{\mathcal{U}_1}=10+\dim N_3=16$.

Since the variety $\overline{\mathcal{U}_1}\times \mathbb{F}^6$ is irreducible, Proposition 14 of \cite{S3} implies that the variety $\overline{\mathcal{U}}=\overline{\varphi (\mathcal{U}_1\times \mathcal{U}_2)}$ is also irreducible. Moreover, since $\varphi$ is injective, Theorem 11.12 of \cite{Har} implies that $\dim \overline{\mathcal{U}}=6+\dim \overline{\mathcal{U}_1}=22$.

Now we define
$$\mathcal{V'}=\{(\mathbf{x_1},Z_1,\mathbf{x_2},Z_2,\mathbf{x_3},Z_3)\in (\mathbb{F}^3\times M_{3\times 2}(\mathbb{F}))^3;\mathbf{x_i}^TZ_j=\mathbf{x_j}^TZ_i\, \mathrm{for}\, \mathrm{all}\, \mathrm{distinct}\, i\, \mathrm{and}\, j\}.$$
Note that this variety is isomorphic to the subvariety of $\mathcal{V}$ consisting of all triples of the form
$$\left(\left[
\begin{array}{ccc}0&\mathbf{x_1}^T&0\\0&0&Z_1\\0&0&0
\end{array}
\right] ,\left[
\begin{array}{ccc}0&\mathbf{x_2}^T&0\\0&0&Z_2\\0&0&0
\end{array}
\right] ,\left[
\begin{array}{ccc}0&\mathbf{x_3}^T&0\\0&0&Z_3\\0&0&0
\end{array}
\right]\right)$$
for some $\mathbf{x_i}\in \mathbb{F}^3$ and some $Z_i\in M_{3\times 2}(\mathbb{F})$. Furthermore, let $\mathcal{U'}$ be the subset of $\mathcal{V'}$ consisting of such 6-tuples in $\mathcal{V'}$ that $\mathbf{x_1}$, $\mathbf{x_2}$ and $\mathbf{x_3}$ are linearly independent, and let
$$\mathcal{W}=\{(Z_1,Z_2,Z_3)\in M_{3\times 2}(\mathbb{F})^3;\mathbf{e_i}^TZ_j=\mathbf{e_j}^TZ_i\, \mathrm{for}\, \mathrm{all}\, \mathrm{distinct}\, i\, \mathrm{and}\, j\}.$$
The variety $\mathcal{W}$ is a vector space, therefore it is irreducible and of dimension 12. Moreover, if $(\mathbf{x_1},Z_1,\mathbf{x_2},Z_2,\mathbf{x_3},Z_3)\in \mathcal{U'}$, then there exists a unique matrix $S\in GL_3(\mathbb{F})$ such that $\mathbf{e_i}^TS=\mathbf{x_i}^T$ for $i=1,2,3$. The rational map $\psi \colon GL_3(\mathbb{F})\times \mathcal{W}\to \overline{\mathcal{U'}}$ defined by $\psi (S,Z_1,Z_2,Z_3)=(S^T\mathbf{e_1},S^{-1}Z_1,S^T \mathbf{e_2},S^{-1}Z_2,S^T\mathbf{e_3},S^{-1}Z_3)$ is therefore a birational equivalence, so $\overline{\mathcal{U'}}$ is irreducible by Proposition 6 in Chapter 4, \S 5 of \cite{CLO'S}, and $\dim \overline{\mathcal{U'}}=\dim \mathcal{W}+\dim GL_3(\mathbb{F})=21$ by Corollary 7 in Chapter 9, \S 5 of \cite{CLO'S}.

Now we define the projection $\pi \colon \overline{\mathcal{U}}\to \overline{\mathcal{U'}}$ by
$$\pi \left(\left[
\begin{array}{ccc}0&\mathbf{x_1}^T&0\\0&Y_1&Z_1\\0&0&\zeta _1J
\end{array}
\right] ,\left[
\begin{array}{ccc}0&\mathbf{x_2}^T&0\\0&Y_2&Z_2\\0&0&\zeta _2J
\end{array}
\right] ,\left[
\begin{array}{ccc}0&\mathbf{x_3}^T&0\\0&Y_3&Z_3\\0&0&\zeta _3J
\end{array}
\right]\right) =(\mathbf{x_1},Z_1,\mathbf{x_2},Z_2,\mathbf{x_3},Z_3).$$
Assume that the projection $\pi$ is not dominant (i.e. the image $\pi (\overline{\mathcal{U}})$ is not dense in $\overline{\mathcal{U'}}$). Then $\overline{\pi (\overline{\mathcal{U}})}$ is a proper subvariety of $\overline{\mathcal{U'}}$, and the irreducibility of $\overline{\mathcal{U'}}$ implies that $\dim \overline{\pi (\overline{\mathcal{U}})}\le \dim \overline{\mathcal{U'}}-1=20$. Theorem 11.12 of \cite{Har} then implies that any component of any fibre of the projection $\pi$ has dimension at least $\dim \overline{\mathcal{U}}-\dim \overline{\pi (\overline{\mathcal{U}})}\ge 2$.

However, let $\mathbf{x_i}=\mathbf{e_i}$ for $i=1,2,3$,
$$Z_1=\left[
\begin{array}{cc}0&1\\0&0\\1&0
\end{array}
\right] ,\quad Z_2=\left[
\begin{array}{cc}0&0\\1&0\\0&1
\end{array}
\right] \quad \mathrm{and}\quad Z_3=\left[
\begin{array}{cc}1&0\\0&1\\0&0
\end{array}
\right] ,$$
and assume that $\pi (X_1,X_2,X_3)=(\mathbf{x_1},Z_1,\mathbf{x_2},Z_2,\mathbf{x_3},Z_3)$ where for each $i=1,2,3$, $X_i=\left[
\begin{array}{ccc}0&\mathbf{x_i}^T&0\\0&Y_i&Z_i\\0&0&\zeta _iJ
\end{array}
\right]$ for some $Y_i\in M_3(\mathbb{F})$ and $\zeta _i\in \mathbb{F}$, and at least one of $Y_1,Y_2,Y_3,\zeta _1,\zeta _2,\zeta _3$ is nonzero. Then
\begin{eqnarray}
\mathbf{x_i}^TY_j&=&\mathbf{x_j}^TY_i,\label{x_iY_j}\\
Y_iZ_j+\zeta _jZ_iJ&=&Y_jZ_i+\zeta _iZ_jJ,\label{Y_iZ_j}\\
Y_iY_j&=&Y_jY_i\label{Y_iY_j}
\end{eqnarray}
for all distinct $i$ and $j$. Moreover, the matrices $Y_i$ are nilpotent and in particular
\begin{equation}\label{Tr}
\mathrm{Tr}\, (Y_i)=0
\end{equation}
for each $i=1,2,3$. Solving linear equations (\ref{x_iY_j}), (\ref{Y_iZ_j}) and (\ref{Tr}) we obtain
$$Y_1=\left[
\begin{array}{ccc}\alpha&\beta&\gamma\\\gamma&-2\alpha&\beta\\ \beta -\delta&\gamma&\alpha
\end{array}
\right] ,Y_2=\left[
\begin{array}{ccc}\gamma&-2\alpha&\beta\\\beta -\delta&-2\gamma&-2\alpha\\-2\alpha&\beta -\delta&\gamma
\end{array}
\right] ,Y_3=\left[
\begin{array}{ccc}\beta -\delta&\gamma&\alpha\\-2\alpha&\beta -\delta&\gamma\\4\gamma&-2\alpha&\beta -\delta
\end{array}
\right] ,$$
$\zeta _1=\delta ,\zeta _2=-3\alpha ,\zeta _3=3\gamma$ and $3(\beta -\delta )=0$. Now we have to consider different cases for the characteristic of the field $\mathbb{F}$.

If $\mathrm{char}\, \mathbb{F}\ne 2,3$, then first we get $\delta =\beta$. A short calculation shows that (\ref{Y_iY_j}) is equivalent to the equations $2\alpha \beta =3\gamma^2$ and $2\beta \gamma =-3\alpha ^2$. Since at least one of $\alpha$, $\beta$ and $\gamma$ is nonzero, it follows that $\beta \ne 0$ and
$$Y_1=\left[
\begin{array}{ccc}\beta \omega&\beta&-\frac{3}{2}\beta \omega ^2\\-\frac{3}{2}\beta \omega ^2&-2\beta \omega&\beta\\0&-\frac{3}{2}\beta \omega ^2&\beta \omega
\end{array}
\right] ,\quad Y_2=\left[
\begin{array}{ccc}-\frac{3}{2}\beta \omega ^2&-2\beta \omega&\beta\\0&3\beta \omega ^2&-2\beta \omega\\-2\beta \omega&0&-\frac{3}{2}\beta \omega ^2
\end{array}
\right] ,$$
$$Y_3=\left[
\begin{array}{ccc}0&-\frac{3}{2}\beta \omega ^2&\beta \omega\\-2\beta \omega&0&-\frac{3}{2}\beta \omega ^2\\-6\beta \omega ^2&-2\beta \omega&0
\end{array}
\right] ,$$
$\zeta _1=\beta$, $\zeta _2=-3\beta \omega$ and $\zeta _3=-\frac{9}{2}\beta \omega ^2$, where $\omega (27\omega ^3-8)=0$. It can be verified that the matrices $Y_1$, $Y_2$ and $Y_3$ are indeed nilpotent and $X_1$ is 1-regular for $\beta \ne 0$. The fibre $\pi ^{-1}(\mathbf{x_1},Z_1,\mathbf{x_2},Z_2,\mathbf{x_3},Z_3)$ is therefore nonempty and it is equal to a union of a finite number of 1-dimensional affine spaces, hence it is 1-dimensional, which is a contradiction.

If $\mathrm{char}\, \mathbb{F}=2$, then we again obtain $\delta =\beta$. The matrix $Y_2$ is then upper triangular with the diagonal $(\gamma ,0,\gamma )$, and since it is nilpotent, it follows that $\gamma =0$. However, then the matrix $Y_1$ is upper triangular with the diagonal $(\alpha ,0,\alpha )$, therefore $\alpha =0$. Hence $Y_1=\left[
\begin{array}{ccc}0&\beta&0\\0&0&\beta\\0&0&0
\end{array}
\right]$, $Y_2=\left[
\begin{array}{ccc}0&0&\beta\\0&0&0\\0&0&0
\end{array}
\right]$, $Y_3=0$, $\zeta _1=\beta$ and $\zeta _2=\zeta _3=0$, i.e. the fibre $\pi ^{-1}(\mathbf{x_1},Z_1,\mathbf{x_2},Z_2,\mathbf{x_3},Z_3)$ is 1-dimensional affine space, which is again a contradiction.

However, if $\mathrm{char}\, \mathbb{F}=3$, then the relation (\ref{Y_iY_j}) implies that $\alpha \delta =\gamma \delta =0$. Assume that $(X_1,X_2,X_3)\in \mathcal{U}$. Then $\delta \ne 0$, so $\alpha =\gamma =0$. However, then $Y_3=(\beta -\delta )I$, so $\delta =\beta$ and we get the same solution as in the case of $\mathrm{char}\, \mathbb{F}=2$. Therefore $\dim (\mathcal{U}\cap \pi ^{-1}(\mathbf{x_1},Z_1,\mathbf{x_2},Z_2,\mathbf{x_3},Z_3))=1$. However, this set is open in the fibre $\pi ^{-1}(\mathbf{x_1},Z_1,\mathbf{x_2},Z_2,\mathbf{x_3},Z_3)$, therefore its closure is a union of some irreducible components of $\pi ^{-1}(\mathbf{x_1},Z_1,\mathbf{x_2},Z_2,\mathbf{x_3},Z_3)$. Hence, there exists a component of this fibre which is 1-dimensional, which is again a contradiction.

In each characteristic we obtained a contradiction, therefore the projection $\pi$ is dominant. Then the continuity of the projection $\pi$ implies that $\overline{\mathcal{U'}}=\overline{\pi (\overline{\mathcal{U}})}\subseteq \overline{\pi (\mathcal{U})}$, i.e. $\pi (\mathcal{U})$ is also dense in $\overline{\mathcal{U'}}$. If $(\mathbf{x_1},Z_1,\mathbf{x_2},Z_2,\mathbf{x_3},Z_3)\in \pi (\mathcal{U})$ is arbitrary, then there exist $Y_1,Y_2,Y_3\in N_3$ and $\zeta _1,\zeta _2,\zeta _3\in \mathbb{F}$ such that
$$\left(\left[
\begin{array}{ccc}0&\mathbf{x_1}^T&0\\0&Y_1&Z_1\\0&0&\zeta _1J
\end{array}
\right], \left[
\begin{array}{ccc}0&\mathbf{x_2}^T&0\\0&Y_2&Z_2\\0&0&\zeta _2J
\end{array}
\right] ,\left[
\begin{array}{ccc}0&\mathbf{x_3}^T&0\\0&Y_3&Z_3\\0&0&\zeta _3J
\end{array}
\right]\right)\in \mathcal{U}\subseteq R_1(3,6).$$
Moreover, since the line $\left\{ \left[
\begin{array}{ccc}0&\mathbf{x_1}^T&0\\0&\lambda Y_1&Z_1\\0&0&\lambda \zeta _1J
\end{array}
\right] ;\lambda \in \mathbb{F}\right\}$ intersects the open set of all 1-regular matrices, all except finitely many matrices on this line are 1-regular, i.e.
$$\left(\left[
\begin{array}{ccc}0&\mathbf{x_1}^T&0\\0&\lambda Y_1&Z_1\\0&0&\lambda \zeta _1J
\end{array}
\right], \left[
\begin{array}{ccc}0&\mathbf{x_2}^T&0\\0&\lambda Y_2&Z_2\\0&0&\lambda \zeta _2J
\end{array}
\right] ,\left[
\begin{array}{ccc}0&\mathbf{x_3}^T&0\\0&\lambda Y_3&Z_3\\0&0&\lambda \zeta _3J
\end{array}
\right]\right) \in R_1(3,6)$$
for all except finitely many scalars $\lambda \in \mathbb{F}$, and therefore
$$\left(\left[
\begin{array}{ccc}0&\mathbf{x_1}^T&0\\0&0&Z_1\\0&0&0
\end{array}
\right], \left[
\begin{array}{ccc}0&\mathbf{x_2}^T&0\\0&0&Z_2\\0&0&0
\end{array}
\right] ,\left[
\begin{array}{ccc}0&\mathbf{x_3}^T&0\\0&0&Z_3\\0&0&0
\end{array}
\right]\right)\in \overline{R(3,6)}.$$
We proved that the set 
$$\left\{\left(\left[
\begin{array}{ccc}0&\mathbf{x_1}^T&0\\0&0&Z_1\\0&0&0
\end{array}
\right], \left[
\begin{array}{ccc}0&\mathbf{x_2}^T&0\\0&0&Z_2\\0&0&0
\end{array}
\right] ,\left[
\begin{array}{ccc}0&\mathbf{x_3}^T&0\\0&0&Z_3\\0&0&0
\end{array}
\right]\right) ;(\mathbf{x_1},Z_1,\mathbf{x_2},Z_2,\mathbf{x_3},Z_3)\in \pi (\mathcal{U})\right\}$$
is a subset of $\overline{R(3,6)}$, and since $\overline{\pi (\mathcal{U})}=\overline{\mathcal{U'}}$, the set
$$\left\{\left(\left[
\begin{array}{ccc}0&\mathbf{x_1}^T&0\\0&0&Z_1\\0&0&0
\end{array}
\right], \left[
\begin{array}{ccc}0&\mathbf{x_2}^T&0\\0&0&Z_2\\0&0&0
\end{array}
\right] ,\left[
\begin{array}{ccc}0&\mathbf{x_3}^T&0\\0&0&Z_3\\0&0&0
\end{array}
\right]\right) ;(\mathbf{x_1},Z_1,\mathbf{x_2},Z_2,\mathbf{x_3},Z_3)\in \overline{\mathcal{U'}}\right\}$$
is also a subset of $\overline{R(3,6)}$. In particular $(A,B,C)\in \overline{R(3,6)}$, which proves Case 1.

{\bf Case 2.} Assume that $c=c'=0$. If there exists $\lambda \in \mathbb{F}$ such that $i+\lambda i'$ and $k+\lambda k'$ are both nonzero, then there exists $\mu \in \mathbb{F}$ such that $\mathrm{rank}\, (\mu A+B+\lambda C)=4$, and $(A,B,C)\in \overline{R(3,6)}$ by Corollary \ref{2-regN(3,n)irr}. Therefore we can assume that $i=i'=0$ or $k=k'=0$, and by Lemma \ref{reduction}(e) we can assume that $k=k'=0$. The commutativity relation of $B$ and $C$ is then equivalent to the equations $ej'=e'j$ and $ij'=i'j$. If $j\ne 0$ or $j'\ne 0$, then by Lemma \ref{reduction}(b) we can assume that $j=1$ and $j'=0$, and the commutativity relation of $B$ and $C$ implies that $e'=i'=0$. The triple $(A,B,C)$ then belongs to some affine space, therefore by Lemma \ref{irr-open} we can assume that $i\ne 0$. The matrix $X=\mathbf{e_6}\mathbf{e_5}^T$ commutes with $A$ and with $C$, therefore $(A,B+\lambda X,C)\in N(3,6)$ for each $\lambda \in \mathbb{F}$. Moreover, for each $\lambda \ne 0$ there exists $\mu \in \mathbb{F}$ such that $\mathrm{rank}\, (B+\lambda X+\mu A)=4$, therefore the triple $(A,B+\lambda X,C)$ belongs to $\overline{R(3,6)}$ by Corollary \ref{2-regN(3,n)irr}. Hence $(A,B,C)\in \overline{R(3,6)}$.

It remains to consider the case when $j=j'=0$. By Lemma \ref{reduction}(b) we can exchange $B$ and $C$ or add any multiple of $B$ to $C$, therefore we can assume that $i'=0$. The triple $(A,B,C)$ then belongs to a 13-dimensional affine space which is an irreducible subvariety of $N(3,6)$, and by Lemma \ref{irr-open} we can assume any open condition on $B$ and $C$. We will assume that $e'\ne 0$. Then the matrix $X=\left[
\begin{array}{cccccc}0&0&0&e'&0&0\\0&0&0&0&e'&0\\0&0&0&0&0&0\\ 0&0&0&0&0&0\\0&0&0&0&0&0\\0&0&g'&0&h'&0
\end{array}
\right]$ commutes with $A$ and with $C$, and $(A,B+\lambda X,C)\in \overline{R(3,6)}$ for each $\lambda \ne 0$ by Case 1. This implies that the triple $(A,B,C)$ also belongs to $\overline{R(3,6)}$, which completes the proof of the proposition.
\end{proof}

Now we can prove the main theorem of this section: irreducibility of $N(3,n)$ for $n\le 6$. Note that the new cases are $n=5$ and $n=6$, since irreducibility of $N(3,n)$ for $n\le 3$ was proved in \cite{N} and irreducibility of $N(3,4)$ was proved in \cite{Y}.

\begin{thm}\label{35,36}
The varieties $N(3,n)$ are irreducible for $n\le 6$.
\end{thm}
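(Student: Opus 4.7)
The cases $n\le 3$ are handled in \cite{N} and $n=4$ in \cite{Y}, so the task is to treat $n=5$ and $n=6$. By Corollary \ref{dense}, it suffices to show that every $(A,B,C)\in N(3,n)$ lies in $\overline{R(3,n)}$. Using Lemma \ref{reduction}(a) I may assume $A$ is in Jordan canonical form, and using Lemma \ref{reduction}(b) I may replace $A$ by any matrix in the linear span $\mathrm{span}(A,B,C)$. The plan is therefore a case analysis on the Jordan type of $A$, arranged so that each invocation of Proposition \ref{1nonzero} or Proposition \ref{square-zero} rests only on Jordan types already settled.

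For $n=5$ the Jordan types to treat are $(5),(4,1),(3,2),(3,1,1),(2,2,1),(2,1,1,1),(1^5)$. Type $(5)$ gives $(A,B,C)\in R_1(3,5)$ directly. Types $(4,1)$ and $(3,2)$ are $2$-regular, so Corollary \ref{2-regN(3,n)irr} applies. For $(3,1,1)$ I apply Proposition \ref{1nonzero} with $k=3$; its hypothesis requires that triples whose linear span contains a matrix of rank $\ge 3$ lie in $\overline{R(3,5)}$, but every such matrix is $2$-regular in $M_5(\mathbb{F})$, so this is already covered. For $(2,2,1)$ I apply Proposition \ref{square-zero} with $l=2,m=1$; its hypothesis demands that a matrix of rank $\ge 3$ (again $2$-regular, done) or rank $2$ with nonzero square (only type $(3,1,1)$, already done) in the span suffices to conclude, and it does. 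For $(2,1,1,1)$ use Proposition \ref{square-zero} with $l=1,m=3$: the hypothesis only requires rank $\ge 2$ matrices in the span to be handled, and all such Jordan types have been done. Finally for $A=0$: if $B$ or $C$ is nonzero I use Lemma \ref{reduction}(b) to swap $A$ with a nonzero element of $\mathrm{span}(B,C)$, reducing to a preceding case; if $B=C=0$ then $(A,B,C)=(0,0,0)$ is a limit of $(\lambda J,0,0)$ with $J$ the full Jordan block.

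For $n=6$ the same strategy works with the ordering $(6)\to(5,1),(4,2),(3,3)\to(4,1,1)\to(3,2,1)\to(2,2,2)\to(3,1,1,1)\to(2,2,1,1)\to(2,1,1,1,1)\to(1^6)$. Types with at most two Jordan blocks are $2$-regular, so Corollary \ref{2-regN(3,n)irr} disposes of $(6),(5,1),(4,2),(3,3)$. Type $(4,1,1)$ is Proposition \ref{1nonzero} with $k=4$, whose hypothesis (rank $\ge 4$ in the span) reduces to the $2$-regular case. Type $(3,2,1)$ is exactly Proposition \ref{321}. Type $(2,2,2)$ is Proposition \ref{square-zero} with $l=3,m=0$, whose hypothesis needs rank $\ge 4$ (done) and rank $3$ with nonzero square, i.e.\ types $(4,1,1)$ and $(3,2,1)$, both already handled. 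Type $(3,1,1,1)$ is Proposition \ref{1nonzero} with $k=3$; the hypothesis concerns triples with a matrix of rank $\ge 3$ in the span, which after replacing $A$ by such a matrix puts us in one of the Jordan types $(6),(5,1),(4,2),(3,3),(4,1,1),(3,2,1),(2,2,2)$, all already done. Types $(2,2,1,1)$ and $(2,1,1,1,1)$ are Proposition \ref{square-zero} with $l=2,m=2$ and $l=1,m=4$ respectively; in each case the rank/square hypothesis is now satisfied by the preceding cases (rank $2$ with nonzero square is only $(3,1,1,1)$, and rank $1$ nilpotent matrices automatically have zero square). The case $A=0$ is handled as in $n=5$.

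The only nontrivial aspect of the argument is the bookkeeping: one must choose the order of the Jordan types so that each application of Proposition \ref{1nonzero} or Proposition \ref{square-zero} invokes only cases already proved, and one must check that no rank/square condition feeds back into the case currently under consideration. For $n\le 6$ the Jordan types are few enough that the linear ordering above resolves all dependencies; this is the step where any subtle circularity would have appeared, but as enumerated it does not.
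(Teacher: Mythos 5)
Your proof is correct and takes essentially the same approach as the paper's: the $2$-regular case is dispatched by Corollary~\ref{2-regN(3,n)irr}, and the remaining Jordan types of $A$ (one nonzero block, all blocks of order at most $2$, or $(3,2,1)$) are handed off to Propositions~\ref{1nonzero}, \ref{square-zero} and \ref{321} respectively. The only difference is presentational: the paper first invokes Lemma~\ref{reduction}(b) to replace $A$ by a linear combination dominating all others in ranks of powers, which compactly encodes the induction that makes the hypotheses of Propositions~\ref{1nonzero} and~\ref{square-zero} available, whereas you make that induction explicit by ordering the Jordan types and checking at each step that the rank and nonzero-square conditions refer only to cases already settled --- a welcome unpacking of the bookkeeping the paper leaves to the reader.
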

\begin{proof}
We have to prove that each triple from $N(3,n)$ belongs to $\overline{R(3,n)}$. Let $(A,B,C)\in N(3,n)$ be any triple. By Lemma \ref{reduction}(b) we can assume that  for each $i\le n-1$ the rank of $A^i$ is not smaller than the rank of the $i$-th power of any linear combination of $A$, $B$ and $C$. If $A$ is 2-regular, then $(A,B,C)\in \overline{R(3,n)}$ by Corollary \ref{2-regN(3,n)irr}. Otherwise the Jordan canonical form of $A$ either has only one nonzero Jordan block or all Jordan blocks of order at most 2 or Jordan blocks of sizes 3, 2 and 1. In all cases, the tripe $(A,B,C)$ belongs to $\overline{R(3,n)}$ by respectively Propositions \ref{1nonzero}, \ref{square-zero} and \ref{321}.
\end{proof}

\end{document}